\definecolor{refkey}{rgb}{1,0,0.5}
\definecolor{labelkey}{rgb}{0,0.4,1}
\renewcommand{\@todonotes@drawMarginNoteWithLine}{%
	\begin{tikzpicture}[remember picture, overlay, baseline=-0.75ex]%
	\node [coordinate] (inText) {};%
	\end{tikzpicture}%
	\marginnote[{
		\@todonotes@drawMarginNote%
		\@todonotes@drawLineToLeftMargin%
	}]{
		\@todonotes@drawMarginNote%
		\@todonotes@drawLineToRightMargin%
	}%
}
\numberwithin{equation}{section}
\theoremstyle{definition}
\theoremstyle{remark}
\numberwithin{equation}{section}
\newtheorem{thm}{Theorem}[section]
\newtheorem{prop}[thm]{Proposition}
\newtheorem{rmk}[thm]{Remark}
\newcommand{\be}{\begin{equation}}
\newcommand{\bs}{\begin{split}}
\newcommand{\es}{\end{split}}
\newcommand{\ee}{\end{equation}}
\newcommand{\bee}{\begin{equation*}}
\newcommand{\eee}{\end{equation*}}
\begin{document}

\author{Qiangchang Ju$^{\dag}$}\thanks{$^{\dag}$
              Institute of Applied Physics and Computational Mathematics, Beijing 100088, P.R.China.
              Email: {ju\_qiangchang@iapcm.ac.cn}}\

\author{Tao Luo$^{\ddag}$}\thanks{$^{\ddag}$Department of Mathematics, City University of Hong Kong, 83 Tat Chee Avenue, Kowloon Tong, Hong Kong.
E-mail: taoluo@cityu.edu.hk}

\author{Xin Xu $^{\S}$}\thanks{$^{\S}$
School of Mathematical Sciences and College of Oceanic and Atmospheric Sciences,
Ocean University of China, Qingdao, P.R.China.
         Email:   xx@ouc.edu.cn}

\title[]{\bf  Singular Limits for the Navier-Stokes-Poisson Equations of Viscous Plasma with Strong Density Boundary Layer}
\begin{abstract}
The quasi-neutral limit of the Navier-Stokes-Poisson system modeling a viscous plasma with vanishing
viscosity coefficients in the half-space $\mathbb{R}^{3}_{+}$ is rigorously proved under a Navier-slip boundary condition for velocity and the Dirichlet boundary condition for electric potential. This is achieved by establishing the nonlinear stability of the approximation solutions involving the strong boundary layer in density and electric potential, which comes from the break-down of the quasi-neutrality near the boundary, and dealing with the difficulty of the interaction of this strong boundary layer
with the weak boundary layer of the velocity field. \\
{\bf Keywords}: Navier-Stokes-Poisson equations,  interaction of strong and weak boundary layers.
\end{abstract}

\maketitle

\section{Introduction}
\label{intro}
In this paper,  we investigate the singular behavior of solutions to a hydrodynamic model of a viscous plasma  in a three dimensional domain with a physical boundary.  The model we consider here is for the behavior of ions in a background of massless electrons. Under the massless assumption,  the electrons follow the classical Maxwell-Boltzmann relation:  let $\rho_e$ be their density, then $\rho_e=e^{-\phi}$ after suitable normalization
of constants, where $\phi$ is the electric potential. The plasma considered in this paper is unmagnetized, consisting of free electrons and a single species of ions that form a compressible viscous fluid, the motion of which is governed by the Navier-Stokes system. We consider this system in the three dimensional half-space:
For time-space variable $(t,x)=(t,x_1,x_2,x_3)=(t,y,x_3)\in \mathbb{R}_+\times \mathbb{R}^2\times \mathbb{R}_+:=\mathbb{R}_+\times\Omega$,
\begin{equation}   \label{1.1}
\left\{
\begin{array}{lll}
\rho_t+\nabla\cdot(\rho u)=0,\\
(\rho u)_t+\nabla\cdot(\rho u\otimes u+T^i\rho\mathbb{I})=\rho\nabla\phi+\mu'\triangle u+(\mu'+\nu')\nabla\nabla\cdot u,\\
\lambda\triangle\phi+e^{-\phi}=\rho,
\end{array}
\right.
\end{equation}
where the unknowns $\rho$ and $ u$ are the density  and velocity of ions, respectively. The average temperature of the ions is denoted by $T^i$ and the squared scaled Debye length is denoted by $\lambda$.  The parameters $\mu'$ and $\nu'$ are viscosity coefficients with $\mu'>0$ and $\mu'+\nu'>0$.

On the boundary $\partial\Omega$, the Navier-slip boundary conditions are imposed on the velocity field, i.e.
$u\cdot n=0, \ (Su\cdot n)_{\tau} +\beta u_{\tau}=0$, where $\beta$ is a positive
constant  measuring the tendency of the fluid to slip on the boundary, $n$ is the unit outer normal to $\partial \Omega$,
$S$ is the strain tensor,
$$Su=\frac{1}{2} ( \nabla u+\nabla u^t)$$
and for some vector field $w$ on $\partial \Omega$, $w_\tau$ stands for the tangential part of $w$, i.e.,
$w_\tau=w-(w\cdot n) n.$ For the simplicity of the presentation, we take $\beta=\frac{1}{2}$. In this case, for
$\Omega=\mathbb{R}^{3+}$, the boundary conditions for the velocity field $u$ read that

\begin{align}  \label{1.2}
u_3=0, \qquad u_i-\frac{\partial u_i}{\partial x_3}=0, \qquad i=1,2.
\end{align}
The Dirichlet boundary condition is imposed  on the electric potential
\begin{align}  \label{1.2'}
\phi=\phi_b(y),
\end{align}
i.e. the electric potential is prescribed on $\partial \Omega$. We consider the case
that $\phi_b(y)$ is smooth and compactly supported, without loss of generality.

The aim of this paper is to study the asymptotic behavior of smooth solutions to  system \eqref{1.1} with boundary condition \eqref{1.2}-\eqref{1.2'} in the regime of small Debye length and small viscosity. For this purpose, we assume that
\begin{align}  \label{1.3}
\mu'=\mu\epsilon^2,\qquad \nu'=\nu\epsilon^2,\qquad \lambda=\epsilon^2.
\end{align}
Formally, it yields by letting $\epsilon=0$ in \eqref{1.1} that
\begin{equation}   \label{1.4}       
\left\{
\begin{array}{lll}
\rho_t+\nabla\cdot(\rho u)=0,\\
(\rho u)_t+\nabla\cdot(\rho u\otimes u+T^i\rho\mathbb{I})=\rho\nabla\phi,\\
e^{-\phi}=\rho,
\end{array}
\right.
\end{equation}
which can be rewritten as the following compressible Euler system
\begin{equation}   \label{euler}       
\left\{
\begin{array}{lll}
\rho_t+\nabla\cdot(\rho u)=0,\\
u_t+u\cdot\nabla u+(T^i+1)\nabla\ln\rho=0.
\end{array}
\right.
\end{equation}
For this system, only the non-penetration boundary condition $u_3|_{x_3=0}=0$ is required. Hence, there is a loss of boundary conditions
which leads to the appearance of the boundary layers when $\epsilon$ tends to zero.

The most interesting and difficult part of the singular limit problem studied in this paper is to deal with the interaction of the strong (of amplitude $O(1)$) boundary layer in the density $\rho$ and the weak (of amplitude $O(\epsilon)$) boundary layer in the velocity field $u$,   as $\epsilon\to 0$. The presence of the strong boundary layer in $\rho$ is due to the Dirichlet boundary condition \eqref{1.2'} in the electric potential $\phi$ since the solution of \eqref{euler} cannot
 in general satisfy $\rho|_{x_3=0}=e^{-\phi_b}$. Indeed, for the Neumann boundary condition for the electric potential, $\nabla_{n} \phi=0$ on $\partial \Omega$,  considered in literatures, for instance, \cite{DFN15, FZ-ARMA },  only weak boundary layers in the density $\rho$ appears. In fact, for the limiting problem \eqref{euler} with the non-penetration boundary condition $u_3|_{x_3=0}=0$, on the boundary $x_3=0$, since $u_3=0$, one has $\frac{(T^i+1)}{\rho}\partial_{x_3}\rho=-(\partial_t+u_y\cdot\nabla_y) u_3=0$ ($u_y=(u_1, u_2)$, $\nabla_y=(\partial_{x_1}, \partial_{x_2})$), therefore ,
 $\partial_{x_3}\rho=0$,
and thus $\partial_{x_3}\phi=-\frac{1}{\rho}\partial_{x_3}  {\rho}=0$ on the boundary $x_3=0$, which matches the Neumann boundary condition for $\phi$ for the original problem. Hence,
only weak boundary layers in $\rho$ and $\phi$ appear if one replaces the Dirichlet boundary condition \eqref{1.2'} by  the Neumann boundary condition  $\partial_{x_3} \phi=0$ on $\partial \Omega$.
  This distinguishes the problem we consider here from other problems in fluids or plasma equations for which only weak boundary layers appear. The examples of this type of problems with weak boundary layers include  the inviscid limit problem of the Navier-Stokes equations with  the Navier-slip boundary condition \eqref{1.2}, and the combined vanishing viscosity limits under the Navier-slip boundary conditions and quasi-neutral limits of the Navier-Stokes-Poisson system of plasma for which the electronic potential satisfies the Neumann boundary condition instead of
 the Dirichlet boundary condition which we study in this paper. The inviscid limit problem of the Navier-Stokes equations of fluids with
 the Navier-slip boundary condition  has been extensively studied for both compressible and incompressible flows by various approaches (\cite{B-C-2010, {C-M-R-1998}, I-P-2006, I-S-2011, K-2006, M-R-2012, W-M-2012, W-X-Y-2015, X-X-2007}).
Indeed,  it is shown in \cite{W-M-2012,W-X-Y-2015}  that solutions to the compressible Navier-Stokes equations with Navier-slip boundary conditions have the following approximations
\begin{align}
\rho^{NS}=\rho^{Euler}(t,x)+\epsilon^{3}\Upsilon(t,y,\frac{x_3}{\epsilon})+O(\epsilon^4),\label{1.7}\\
u^{NS}=u^{Euler}(t,x)+\epsilon U(t,y,\frac{x_3}{\epsilon})+O(\epsilon^2),\label{1.8}
\end{align}
where $\Upsilon$ and $U$ are smooth profiles with fast decay in the last variable which indicate that the boundary layers for both velocity and density have smaller amplitudes, of the order of $O(\epsilon^{3})$ and $O(\epsilon)$, respectively.  Therefore, they appear as weak boundary layers.  Furthermore, the boundary layers for the density are weaker than the one for the velocity.

For a Navier-Stokes-Poisson system of plasma, Donatelli, Feireisl and Novotn\'y \cite{DFN15} studied a singular limit for the  Navier-Stokes-Poisson system in a bounded domain under the boundary conditions  that the velocity field $u$ satisfies the Navier-slip boundary conditions and the electric potential $\phi$ satisfies the Neumann boundary condition, excluding the strong boundary layers as shown in \cite{DFN15}. This is a key difference from the case of the Dirichlet boundary condition  for
$\phi$ we study in the present paper. Moreover, only $L^2$-convergence of weak solutions was discussed in
\cite{DFN15} without estimates on derivatives, and no boundary layer analysis was given.  In the present paper,
we prove that the approximate solutions involving boundary layer profiles are nonlinear stable with detailed regularity estimates, giving a clear picture of the singular behavior of solutions near the boundary for small Debye length and viscosity.

For the Navier-Stokes equations with the Navier-slip boundary condition, the uniform regularity of solutions which yields the vanishing viscosity limit by the
compactness  was established by Masmoudi and Rousset \cite{M-R-2012} for the incompressible flow by using the  conormal Sobolev estimates.  This approach was later adopted to study the compressible isentropic flow by Wang, Xin, and Yong \cite{W-X-Y-2015}(see also \cite{WangYong} for the nonisentropic flow).  The weaker boundary layer of the size $O(\epsilon^{3})$ in the density $\rho$ plays a crucial role in \cite{W-X-Y-2015} to extend the uniform regularity estimates for the incompressible flow in \cite{M-R-2012} to the compressible one. However,
 this approach of establishing the uniform estimates of the solutions in  conormal Sobolev space  is not applicable to the problem that we study in this paper due to the  strong boundary layer in the density. Indeed, as it will be shown  later, the boundary layer expansion for the density $\rho$ for system \eqref{1.1} with boundary condition \eqref{1.2}-\eqref{1.2'} takes the form of
\begin{align}
\rho^{NSP}=\rho^{Euler}(t,x)+\Upsilon^0(t,y,\frac{x_3}{\epsilon})+O(\epsilon),
\end{align}
and velocity has a similar expansion to \eqref{1.8}, where profile $\Upsilon^0$ is smooth and fast decreasing in the last variable. Due to the strong boundary layer in the density $\rho$ and its interaction with the weak boundary layer of the size of $O(\epsilon)$ of  the velocity field $u$, it poses a great challenge to rigorously justify the small viscosity and Debye length limit.

The strong density boundary layer was first studied by G\'{e}rard-Varet, Han-Kwan and Rousset in \cite{G-H-R-2013, G-H-R-2016}, where the quasi-neutral limits of the isothermal Euler-Poisson system with both subsonic and supersonic outflow boundary conditions were investigated. By constructing approximate solutions and then showing their stability, the authors proved rigorously the quasi-neutral limit of the system.  The approach  used in \cite{G-H-R-2013, G-H-R-2016} to prove the stability can be regarded as a sort of ``hyperbolic'' method, since the normal derivatives of the solutions can be represented by the tangential derivatives of the solutions for Euler-Poisson system and thus the normal derivative estimates can be obtained directly from the tangential derivative estimates and the vorticity estimates. However,  such hyperbolic method breaks down for our viscous model.  For the problem we study in this paper, the key issue is to deal with the nonlinear coupling of the strong density boundary layer and weak boundary layer of the velocity filed,  for which new ways to obtain the normal derivatives of the solution need to be developed, compared with the approach used in \cite{G-H-R-2013, G-H-R-2016}.  We will give a detailed description of our approach in the next section.

  An approximate solution, up to any order,  involving the boundary layer corrections to the problem \eqref{1.1}-\eqref{1.2'} was constructed recently by Ju and Xu  \cite{Ju-Xu}. Moreover,  the linearized stability of the approximation solution is justified in \cite{Ju-Xu}. However, the nonlinear stability was left open in \cite{Ju-Xu} due to serious difficulties caused by the strong boundary layer and the characteristic boundary which create a great challenge in the estimates of the interaction of the strong boundary layer of the density and the weak boundary layer of the velocity fields. Indeed, in the asymptotic expansion in \cite{Ju-Xu} near the boundary, the normal derivative of the density is   of the order of  $O(\epsilon^{-1})$ and the second normal derivative of the velocity field, $\partial^2_{x_3} u$, is of the size $O(\epsilon^{-1})$, i.e., the strong boundary layer appears in the normal
derivative of the velocity field.  Dealing with such extremely singular terms
is the most difficult part of the nonlinear analysis, compared with the linear
stability analysis in \cite{Ju-Xu}.  Therefore, it is non-trivial to extend
the linear stability to a nonlinear one.

It should be noted that we discuss the Navier-slip boundary condition for the velocity field in this paper, instead of the Dirichlet boundary condition $u=0$ on $\partial \Omega$ for which the boundary layer is a strong characteristic one described by the Prandtl equations, and the justification of vanishing viscosity limit is a
major open problem in the mathematical theory of fluid mechanics, for which results are available only for
some special cases such as the analytic data (\cite{ Sammartino,  NNT}),   the case when the vorticity is away from the boundary (\cite{ Maekawa,  FTZ }) , or the steady state Navier-Stokes equations (\cite{ GuoNgu,  GuoS1, GuoS }). For MHD flow with certain
boundary condition on the magnetic field, a cancelation of the leading singular terms is used in \cite{  LXY } to justify
the vanishing viscosity and magnetic diffusion limit. However, such a cancellation is not available for system \eqref{1.1} when the Dirichlet boundary condition $u|_{\partial \Omega} =0$ is imposed.

Before ending this introduction, we give some references related to this paper.  For the Navier-Stokes-Poisson system, the combined quasi-neutral and vanishing viscosity limits has  been justified for weak solutions by Wang and Jiang \cite{W-J} in the torus. The quasi-neutral limit of either Euler-Poisson system or Navier-Stokes-Poisson system with fixed viscosity coefficients has been intensively studied. For the related references, one may
refer to \cite{C-G-2000,FZ-ARMA, H-2011, J-L-L-2009, L-J-X-2016, P-W-2004, S-N-2001,W-2004}.  The problem of formation and dynamics of the
plasma sheath was investigated in  \cite{10, 20}  and
references therein. Based on a formal expansion, a two-fluid
quasi-neutral plasma was studied in \cite{9}. For the existence of weak solutions to the Navier–Stokes–Poisson equations, interested readers may refer to \cite{DL15,K-S2008,ZT07} and the references therein for more details.

Throughout this paper, the positive generic
constants that are independent of $\epsilon$ are denoted by $C$. We use $H^s$ to denote the usual Sobolev space, and the corresponding Sobolev norm is denoted by $\|\cdot\|_{H^s}$. Moreover, we denote $\|\cdot\|=\|\cdot\|_{L^2(\Omega)}$ for simplicity.
The notation $|\cdot|_{H^m(\partial\Omega)}$ will be used for the standard Sobolev norm of functions defined on the boundary $\partial\Omega$.
We also set $u_y=(u_1,u_2)$, $\nabla_y=(\partial_1,\partial_2)$, $\triangle_y=\partial^2_1+\partial^2_2$. Finally, the standard commutator of the operators $A$ and $B$ is denoted by $[A,B]=AB-BA$.


\section{Main results}
\label{sec:1}
\subsection{Approximation solutions}
First, we recall the following results concerned with the approximation solution. which was proved in \cite{Ju-Xu} .
\begin{prop} \label{thm2.1}
Let $m\geq3$, $K\in\mathbb{N}_+$, and $\tilde{\rho}(x)$ be a  smooth function with a positive lower bound. Assuming that $(\rho^0_0,u_0^0)$ satisfies compatibility conditions with the boundary conditions and $(\rho^0_0-\tilde{\rho},u_0^0)\in H^{m+2K+3}(\mathbb{R}^3_+)$, then there exists $T^{\ast}>0$ independent of $\epsilon$ and a smooth approximation solution $(\rho_a,u_a,\phi_a)$ of order $K$ to \eqref{1.1}-\eqref{1.2} of the form
\begin{align}
\rho_a=\sum_{i=0}^K\epsilon^i(\rho^i(t,x)+\Upsilon^i(t,y,\frac{x_3}{\epsilon})),\nonumber\\
u_a=\sum_{i=0}^K\epsilon^i(u^i(t,x)+U^i(t,y,\frac{x_3}{\epsilon})),\nonumber\\
\phi_a=\sum_{i=0}^K\epsilon^i(\phi^i(t,x)+\Phi^i(t,y,\frac{x_3}{\epsilon})),\nonumber
\end{align}
such that\\
\textrm{\textbf{(i)}} The leading order $(\rho^0,u^0)$ is the solution of isothermal Euler equation
\begin{equation}   \label{1.4}       
\left\{
\begin{array}{lll}
\rho^0_t+\nabla\cdot(\rho^0 u^0)=0,\\
\rho^0(u^0_t+u^0\cdot\nabla u^0)+(T^i+1)\nabla\rho^0=0.
\end{array}
\right.
\end{equation}
on $[0,T^{\ast}]$ with initial data $(\rho^0_0,u_0^0)$ and the non-penetration boundary condition $u^0_3=0$, such that
$$(\rho^0-\tilde{\rho},u^0)\in C^0([0,T^{\ast}],\ H^{m+2K+3}(\mathbb{R}^3_+)).$$
And $\phi^0$ is determined by the relation $e^{-\phi^0}=\rho^0$.\\
\textrm{\textbf{(ii)}} For any $1\leq j\leq K$, the higher-order terms $(\rho^j,u^j,\phi^j)$ satisfy
$$(\rho^j,\ u^j,\ \phi^j)\in C^0([0,T^{\ast}],\ H^{m+3}(\mathbb{R}^3_+)).$$
\textrm{\textbf{(iii)}} The smooth profiles $(\Upsilon^j,\ U^j,\ \Phi^j)$ and their derivatives are exponentially decay functions with respect to the fast variable $z=\frac{x_3}{\epsilon}$. In particular, the leading order term $U^0(t,x,z)\equiv0$.
\\
\textrm{\textbf{(iv)}} Let $(\rho^\epsilon,u^\epsilon,\phi^\epsilon)$ be a solution of \eqref{1.1} and define the error term $(\rho,u,\phi)$ as
$$\rho=\rho^\epsilon-\rho_a, \qquad u=u^\epsilon-u_a,\qquad \phi=\phi^\epsilon-\phi_a.$$
Then $(\rho,u,\phi)$ satisfies
\begin{equation}   \label{2.4}       
\left\{
\begin{array}{lll}
\partial_t\rho+(u_a+u)\cdot\nabla\rho+\rho\nabla\cdot(u+u_a)+\nabla\cdot(\rho_au)=\epsilon^KR_\rho,\\
\partial_t u+(u_a+u)\cdot\nabla u+u\cdot\nabla u_a+T^i(\frac{\nabla \rho}{\rho+\rho_a}-\frac{\nabla\rho_a}{\rho_a}(\frac{\rho}{\rho_a+\rho}))\\
\qquad\qquad=\nabla\phi+\frac{\mu\epsilon^2}{\rho_a+\rho}\triangle u+\frac{(\mu+\nu)\epsilon^2}{\rho_a+\rho}\nabla\nabla\cdot u+h(\rho, \rho_a, u_a)+\epsilon^KR_u,\\
\epsilon^2\triangle\phi=\rho-e^{-\phi_a}(e^{-\phi}-1)+\epsilon^{K+1}R_\phi,
\end{array}
\right.
\end{equation}
where the reminders $(R_\rho,R_u,R_\phi)$ satisfying
\begin{align}  \label{2.5}
\sup_{[0,T^{\ast}]}\|\nabla^\alpha(R_\rho,R_u,R_\phi)\|_{L^2(\mathbb{R}^3_+)}\leq C\epsilon^{-\alpha_3},\qquad \forall \alpha=(\alpha_1,\alpha_2,\alpha_3),
\end{align}
and $h(\rho, \rho_a, u_a)$ is given by
\begin{align}  \label{2.6}
h(\rho, \rho_a, u_a)=-\frac{\mu\epsilon^2\rho}{\rho_a(\rho_a+\rho)}\triangle u_a-\frac{(\mu+\nu)\epsilon^2\rho}{\rho_a(\rho_a+\rho)}\nabla\nabla\cdot u_a.
\end{align}
\end{prop}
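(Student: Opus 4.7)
The plan is a matched asymptotic construction. I substitute the ansatz into \eqref{1.1}, Taylor-expand each outer term about $x_3=0$ within the layer, and use $\partial_{x_3}=\epsilon^{-1}\partial_z$ on the profile variables to collect powers of $\epsilon$. This produces an outer hierarchy for $(\rho^i,u^i,\phi^i)$ on $\mathbb{R}^3_+$ and an inner ODE hierarchy on $z\in[0,\infty)$ for the profiles $(\Upsilon^i,U^i,\Phi^i)$, coupled through the matching conditions that their sums satisfy \eqref{1.2}-\eqref{1.2'} at $x_3=z=0$ and that every profile decays as $z\to\infty$.

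At leading order the outer problem is precisely the isothermal Euler system \eqref{1.4} with $u^0_3|_{x_3=0}=0$, and classical hyperbolic theory provides local-in-time smooth solutions with $(\rho^0-\tilde\rho,u^0)\in C^0([0,T^\ast];H^{m+2K+3})$; the relation $e^{-\phi^0}=\rho^0$ then determines $\phi^0$. Since generically $\rho^0|_{x_3=0}\neq e^{-\phi_b}$, a leading-order correction $\Phi^0$ is required, satisfying the semilinear ODE
\begin{equation*}
\partial_z^2\Phi^0=e^{-\phi^0(t,y,0)}\bigl(e^{-\Phi^0}-1\bigr)+\Upsilon^0,\qquad \Phi^0|_{z=0}=\phi_b-\phi^0|_{x_3=0},\qquad \Phi^0\to 0\ \text{as}\ z\to\infty.
\end{equation*}
The leading-order inner momentum equation provides an algebraic relation expressing $\Upsilon^0$ through $\Phi^0$ and forces $U^0\equiv 0$ via the non-penetration trace and decay at infinity; substituting back yields an autonomous equation in $z$ for $\Phi^0$ with a coercive nonlinearity, solvable by a phase-plane/energy argument and producing smooth, exponentially decaying $\Phi^0,\Upsilon^0$.

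For $j\geq 1$ I proceed inductively. The outer equations become linear hyperbolic systems with coefficients frozen at $(\rho^0,u^0)$ and lower-order sources, solvable in $C^0([0,T^\ast];H^{m+3})$ once boundary traces are prescribed; the inner equations are linear ODEs in $z$ whose homogeneous parts are the linearizations of the leading Poisson ODE around $\Phi^0$, which by the exponential decay of $\Phi^0$ enjoy an exponential dichotomy yielding a unique decaying solution for each prescribed $z=0$ datum. The boundary traces of the outer correctors and the $z=0$ data of the profiles are fixed simultaneously by demanding that their sum reproduce \eqref{1.2}-\eqref{1.2'} at order $\epsilon^j$; the main obstacle is verifying that the Navier-slip and non-penetration conditions, together with the solvability conditions at the characteristic boundary of the outer hyperbolic system, provide exactly the right constraints to close this coupled algebraic system at every order, which is the technical heart of \cite{Ju-Xu}.

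Once the approximation is in hand, substituting into \eqref{1.1} produces residuals of formal order $\epsilon^{K+1}$, and subtracting from the equations satisfied by $(\rho^\epsilon,u^\epsilon,\phi^\epsilon)$ gives \eqref{2.4} by direct algebra; the auxiliary term $h(\rho,\rho_a,u_a)$ simply collects the expansion of $\tfrac{1}{\rho_a+\rho}-\tfrac{1}{\rho_a}$ in the viscous nonlinearity. For \eqref{2.5}, outer parts contribute uniformly bounded $L^2$ norms, while profile parts contribute a factor $\epsilon^{-1}$ per normal derivative (since $\partial_{x_3}=\epsilon^{-1}\partial_z$); exponential decay in $z$ turns the $x_3$ integration into a factor absorbed into the constant, yielding exactly the stated $\epsilon^{-\alpha_3}$ loss.
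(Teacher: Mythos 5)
The paper never proves Proposition~\ref{thm2.1} itself: the text explicitly states that the result ``was proved in \cite{Ju-Xu}'' and merely recalls it, so there is no in-paper argument against which your sketch can be lined up. What you describe is nevertheless the same matched-asymptotics construction used in \cite{Ju-Xu}: an outer hierarchy driven at order zero by the isothermal Euler system with $u_3^0|_{x_3=0}=0$ and the quasi-neutral relation $e^{-\phi^0}=\rho^0$, an inner ODE hierarchy in $z=x_3/\epsilon$, exponential dichotomy for the linearized inner problems at orders $j\geq 1$, and the simultaneous determination of outer boundary traces and inner $z=0$ data from \eqref{1.2}--\eqref{1.2'}. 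You correctly flag $U^0\equiv 0$ and that the strong layer lives in $\rho$ and $\phi$; you correctly read $h$ as the difference $\tfrac{1}{\rho_a+\rho}-\tfrac{1}{\rho_a}$ multiplying the viscous terms; and your accounting for the $\epsilon^{-\alpha_3}$ loss per normal derivative of the profiles is exactly how \eqref{2.5} arises.

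However, the leading-order inner Poisson equation you wrote has the wrong sign, and at this level of the construction the sign is not cosmetic. Starting from $\epsilon^{2}\triangle\phi+e^{-\phi}=\rho$, expanding in the layer and using $e^{-\phi^0}=\rho^0$ at $x_3=0$, one gets
\begin{equation*}
\partial_z^2\Phi^0 \;=\; e^{-\phi^0(t,y,0)}\bigl(1-e^{-\Phi^0}\bigr)+\Upsilon^0,
\end{equation*}
not $e^{-\phi^0}(e^{-\Phi^0}-1)+\Upsilon^0$. Eliminating $\Upsilon^0$ via the leading-order inner vertical momentum balance (which, with $U^0\equiv 0$ and $T^i=1$, forces $\Upsilon^0=\rho^0(e^{\Phi^0}-1)$ after integrating from $z=\infty$) turns the correct equation into $\partial_z^2\Phi^0=2\rho^0(t,y,0)\sinh\Phi^0$: a strictly increasing, odd nonlinearity whose phase plane has, for \emph{every} prescribed $\Phi^0(0)=\phi_b-\phi^0|_{x_3=0}$ of either sign, a unique orbit decaying exponentially to $0$. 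With your sign the right-hand side becomes $2\rho^0(\cosh\Phi^0-1)\geq 0$, the first integral $\tfrac12(\partial_z\Phi^0)^2=2\rho^0(\sinh\Phi^0-\Phi^0)$ is negative for $\Phi^0<0$, and no decaying orbit exists when $\phi_b<\phi^0|_{x_3=0}$. Since this coercivity/monotonicity at order zero is precisely what makes the strong layer (and, via the exponential dichotomy, all higher-order layers) constructible, this is the step in your sketch that needs fixing before the rest of the hierarchy can be trusted. A smaller imprecision: the residuals in the first two equations of \eqref{2.4} are $\epsilon^K R_\rho$ and $\epsilon^K R_u$ (with $\epsilon^{K+1}R_\phi$ only in the Poisson equation), so the blanket claim of formal order $\epsilon^{K+1}$ residuals in \eqref{1.1} should be tracked equation by equation.
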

\begin{rmk}
Generally, the leading order terms $\Upsilon^0$ and $\Phi^0$ are nonzero. This implies that the strong boundary layers for density and electric field will appear in the limit process.
\end{rmk}
\subsection{Main Theorem}
The aim of this paper is  to establish the nonlinear stability  of the approximation solution constructed in the above theorem. To this end, we complete the system \eqref{2.4} with the following initial and boundary conditions. The initial conditions are given by
\begin{align} \label{2.7}
\rho|_{t=0}=\epsilon^{K+1}\rho_0, \qquad u|_{t=0}=\epsilon^{K+1}u_0,
\end{align}
and the boundary conditions are
\begin{align} \label{2.8}
u_3|_{x_3=0}=0, \qquad (u_i-\frac{\partial u_i}{\partial x_3})|_{x_3=0}=0, \qquad i=1,2, \qquad \phi|_{x_3=0}=0.
\end{align}
The main results of this paper are stated as follows.
\begin{thm}  \label{thm3.1}
Under the assumptions of Proposition \ref{thm2.1}, assume further that the initial data \eqref{2.7} satisfies $(\rho_0,u_0)\in H^6(\mathbb{R}^3_+)$ and the compatibility conditions with the boundary conditions \eqref{2.8}. Then for $K>6$ and sufficiently small $\epsilon$, there exists $T>0$ independent of $\epsilon$ such that the initial boundary value problem \eqref{2.4}, \eqref{2.7} and \eqref{2.8} admits a unique solution $(\rho,u,\phi)$ on $[0,T]$ and
\begin{align}
\sup_{0\leq t\leq T}\|(\rho,u,\phi,\epsilon\nabla\phi)\|_{H^3(\mathbb{R}^3_+)}\leq C\epsilon^{K-5}.\label{es1}
\end{align}
\end{thm}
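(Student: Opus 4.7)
The plan is to prove Theorem~\ref{thm3.1} by a continuation argument based on uniform-in-$\epsilon$ $H^3$ a priori estimates for the perturbation system \eqref{2.4}--\eqref{2.8}. Local well-posedness on some $[0,T_\epsilon]$ follows from a standard iteration scheme for compressible Navier--Stokes coupled with a semilinear Poisson equation, so the real task is to show that the bootstrap assumption
\[
\sup_{[0,t]}\|(\rho,u,\phi,\epsilon\nabla\phi)\|_{H^3(\mathbb{R}^3_+)} \le 2 M_0 \epsilon^{K-5}
\]
is actually improved to $M_0 \epsilon^{K-5}$ on some time interval $[0,T]$ with $T$ independent of $\epsilon$, provided $K>6$; a standard continuation argument then concludes.

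The basic $L^2$ estimate is obtained by multiplying the continuity equation in \eqref{2.4} by $T^i\rho/(\rho_a+\rho)$ and the momentum equation by $(\rho_a+\rho)u$, adding, and integrating. The pressure terms assemble into a perfect time derivative, the viscous term yields the parabolic dissipation $\mu\epsilon^2\|\nabla u\|^2 + (\mu+\nu)\epsilon^2\|\nabla\cdot u\|^2$, and the electric coupling is converted into the time derivative of $\tfrac12(\epsilon^2\|\nabla\phi\|^2 + \|e^{-\phi_a/2}\phi\|^2)$ using the linearized Poisson equation $\epsilon^2\triangle\phi - e^{-\phi_a}\phi = \rho + O(\phi^2) + \epsilon^{K+1}R_\phi$. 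All boundary terms vanish because $u_3=0$ and $\phi=0$ on $\{x_3=0\}$, while the boundary contribution of $\mu\epsilon^2\triangle u$ is absorbed by the Navier--slip condition \eqref{2.8}. Applying tangential and time derivatives $\partial^\alpha=\partial_t^{\alpha_0}\partial_1^{\alpha_1}\partial_2^{\alpha_2}$ with $|\alpha|\le 3$ preserves all three boundary conditions, so the same symmetrization carries over at each derivative level; commutators with the variable coefficients $\rho_a$ and $u_a$ are controlled by Moser-type inequalities, and no power of $\epsilon$ is lost because tangential and time derivatives do not differentiate the fast variable $z=x_3/\epsilon$ in the profiles $\Upsilon^i,U^i,\Phi^i$.

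The normal derivatives are recovered from the equations themselves. For the velocity, rewriting the momentum equation as
\[
\mu\epsilon^2\partial_{x_3}^2 u = (\rho_a+\rho)\bigl[\partial_t u + (u_a+u)\cdot\nabla u + \text{pressure} - \nabla\phi\bigr] - (\mu+\nu)\epsilon^2\partial_{x_3}\nabla\cdot u - \mu\epsilon^2\triangle_y u + \cdots
\]
and invoking elliptic regularity under the Navier--slip condition trades one normal derivative of $u$ for $\epsilon^{-2}$ times tangential data already controlled in the previous step; iterating yields $\|\partial_{x_3}^k u\|$ for $k\le 3$. For the density, the continuity equation expresses $(u^a_3+u_3)\partial_{x_3}\rho$ in terms of $\partial_t\rho$, tangential derivatives, and a controlled source; away from $\{x_3=0\}$ this immediately gives $\partial_{x_3}\rho$, while near the boundary, where $u^a_3=0$, one uses the third component of the momentum equation evaluated at $x_3=0$ to extract $\partial_{x_3}\rho|_{x_3=0}$ from $\partial_{x_3}\phi|_{x_3=0}$ and viscous terms, closing the loop through standard $H^k$ elliptic regularity for the Poisson equation (whose linearization $\epsilon^2\triangle - e^{-\phi_a}$ is coercive), which yields $\|\phi\|_{H^k}+\epsilon\|\nabla\phi\|_{H^k} \le C(\|\rho\|_{H^k} + \text{quadratic in }\phi + \epsilon^{K+1}\|R_\phi\|_{H^k})$.

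The hardest ingredient, and what distinguishes this from the linear analysis of \cite{Ju-Xu}, is the forcing $h=-\mu\epsilon^2\rho\triangle u_a/[\rho_a(\rho_a+\rho)]$ from \eqref{2.6}. Since $u_a$ contains the weak boundary-layer profile $\epsilon U^1(t,y,x_3/\epsilon)$, one has $\partial_{x_3}^j u_a = O(\epsilon^{1-j})$, so $\epsilon^2\triangle u_a$ is only $O(\epsilon)$ in $L^\infty$ while $\partial_{x_3}^3(\epsilon^2\triangle u_a)$ is as bad as $O(\epsilon^{-2})$. These singular factors are absorbed by the $O(\epsilon^{K-5})$ smallness of $\rho$ from the bootstrap: a careful Leibniz expansion shows $\|\partial^\alpha h\|_{L^2} \le C\epsilon^{K-5-(\alpha_3-1)_+}$, which is strictly smaller than the bootstrap threshold as soon as $K>6$. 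Feeding these bounds together with the remainder estimates \eqref{2.5} into a Gronwall argument yields
\[
\sup_{[0,T]}\|(\rho,u,\phi,\epsilon\nabla\phi)\|_{H^3}^2 \le C\bigl(\|(\rho_0,u_0)\|_{H^3}^2\,\epsilon^{2K+2} + \epsilon^{2(K-5)}\bigr) \le M_0^2\,\epsilon^{2(K-5)}
\]
on a fixed time $[0,T]$ independent of $\epsilon$, improving the bootstrap and completing the existence part; uniqueness is a routine $L^2$ estimate on the difference of two solutions. The principal obstacle is the interplay between the normal-derivative recovery for $\rho$ and the control of $h$: both sit in the strip near $\{x_3=0\}$ where the transport coefficient $u^a_3$ degenerates and the singular normal derivatives of $U^1$ are maximal, and the only leverage is the smallness of $\rho$ coming from $K>6$.
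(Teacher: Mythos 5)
Your overall architecture — local well-posedness for fixed $\epsilon$, a bootstrap assumption improved by uniform a priori estimates, the threshold $K>6$ coming from requiring $2K-10>2$, the final bound $\sqrt{\epsilon^{2K-10}}=\epsilon^{K-5}$, and the observation that tangential/time derivatives do not touch the fast variable $z=x_3/\epsilon$ — matches the paper. But the mechanism you propose for the normal derivatives is where the real difficulty lives, and it is both different from and weaker than what the paper actually does.

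The paper works in conormal spaces with $Z_3=\psi(x_3)\partial_3$ added to $\partial_t,\partial_1,\partial_2$; you use only $\partial_t,\partial_1,\partial_2$. More importantly, for $\partial_3\rho$ you suggest solving the continuity equation for $(u_a^3+u_3)\partial_3\rho$ away from the boundary and then ``using the third component of the momentum equation evaluated at $x_3=0$'' near it. This does not close: a boundary trace identity gives no interior control of $\partial_3\rho$ in a strip where $u^3_a+u_3$ degenerates, and the momentum equation, if used directly in the interior, reintroduces the singular $\partial_{33}u_3$ term (which the paper notes is of size $O(\epsilon^{-1})$ near the boundary due to the weak boundary layer in $u_a$). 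The paper's key device, which your proposal misses entirely, is the identity \eqref{nor6}: differentiating the continuity equation in $x_3$ produces a $(\rho_a+\rho)\partial_{33}u_3$ term, the $u_3$ momentum equation produces a $(2\mu+\nu)\epsilon^2\partial_{33}u_3$ term, and the combination $(2\mu+\nu)\epsilon^2\times\eqref{nor3}+(\rho_a+\rho)\times\eqref{nor5}$ cancels $\partial_{33}u_3$ exactly and leaves a damped transport equation
\begin{align*}
(2\mu+\nu)\epsilon^2\Big(\partial_tR+(u_a+u)\cdot\nabla R+\cdots\Big)+(\rho_a+\rho)R
=(2\mu+\nu)\epsilon^2M_1+M_2+M_3
\end{align*}
for $R=\partial_3\rho$. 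The damping term $(\rho_a+\rho)R$ is what buys the $\int_0^T\|R\|^2$ control; nothing in your scheme provides an analogous mechanism.

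For the tangential velocity, your ``elliptic regularity trades one $\partial_3$ for $\epsilon^{-2}$ tangential data'' step, applied three times, would yield $\|\partial_3^3 u\|\sim\epsilon^{K-5-6}$, which is not small enough to close for $K$ only slightly larger than $6$. The paper instead estimates the vorticity $\eta=\omega_y-u_y^\perp$ (which satisfies a homogeneous Dirichlet condition) by a parabolic energy estimate, recovers $\partial_3 u_y$ from $\eta$, and uses \eqref{nor13}/\eqref{nor14} for $\partial_3 u_3$ after the $R$-estimate is in hand. Your heuristic bound $\|\partial^\alpha h\|\lesssim\epsilon^{K-5-(\alpha_3-1)_+}$ for the singular forcing $h$ from \eqref{2.6} is plausible, but it is not the bottleneck; the bottleneck is exactly the $\partial_3\rho\,\partial_{33}u_3$ interaction, and without the cancellation above the argument does not close.
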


As a corollary of Theorem \ref{thm3.1}, using the estimates \eqref{es1} and Sobolev embedding inequalities, it yields the following results:
\begin{align}
\sup_{0\leq t\leq T}\|(\rho^\epsilon&-\rho^0-\Upsilon^0)\|_{L^\infty(\mathbb{R}^3_+)}\nonumber\\
&\leq \sup_{0\leq t\leq T}\big(\|(\rho^\epsilon-\rho_a)\|_{L^\infty(\mathbb{R}^3_+)}
+\|\sum_{i=1}^K\epsilon^i(\rho^i+\Upsilon^i)\|_{L^\infty(\mathbb{R}^3_+)}\big)\nonumber\\
&\leq C\epsilon,\nonumber
\end{align}
and
\begin{align}
\sup_{0\leq t\leq T}&\|(\rho^\epsilon-\rho^0)\|_{L^2(\mathbb{R}^3_+)}\nonumber\\
&\leq\sup_{0\leq t\leq T}\big(\|(\rho^\epsilon-\rho_a)\|_{L^2(\mathbb{R}^3_+)}
+\|\Upsilon^0\|_{L^2(\mathbb{R}^3_+)}+\|\sum_{i=1}^K\epsilon^i(\rho^i+\Upsilon^i)\|_{L^2(\mathbb{R}^3_+)}\big)\nonumber\\
&\leq C\epsilon^{\frac{1}{2}}.\nonumber
\end{align}
Similarly, we have
\begin{align}
\sup_{0\leq t\leq T}(\|(u^\epsilon-u^0)\|_{L^\infty(\mathbb{R}^3_+)}+\|(u^\epsilon-u^0)\|_{L^2(\mathbb{R}^3_+)})&\leq C\epsilon,\nonumber\\
\sup_{0\leq t\leq T}\|(\phi^\epsilon-\phi^0-\Phi^0)\|_{L^\infty(\mathbb{R}^3_+)}&\leq C\epsilon,\nonumber\\
\sup_{0\leq t\leq T}\|(\phi^\epsilon-\phi^0)\|_{L^2(\mathbb{R}^3_+)}&\leq C\epsilon^{\frac{1}{2}}.\nonumber
\end{align}

The proof of this theorem  is by a bootstrap argument based on the local well-posedness for  system \eqref{2.4}, \eqref{2.7} and \eqref{2.8} for fixed $\epsilon$.

The key of bootstrap argument  is to establish the high order estimates of
Sobolev norms, dealing with those extremely singular nonlinear terms which do not appear in the linear analysis in \cite{Ju-Xu}. The most difficult part is on the normal derivative estimates, high-order normal derivatives and mixed derivatives of the solutions, due to the strong boundary layer in $\rho$ and its interaction with the weak boundary layer of $u$. In each step, we have to identify the precise control of various norms of the error $(\rho, u, \phi)$ in terms of $\epsilon$ so that the bootstrap argument can be closed.  The strategy of the proof is following:
  First, we prove the basic $L^2$ estimate of the solution. Even this part is
quite new and difficult compared with that for linear stability;  one has to deal with some singular terms in the error system \eqref{2.4}, again, due to the interaction of the  strong boundary layer in the density and weak boundary layer in velocity.  A proper cancelation of the singular terms is identified to obtain the $L^2$ estimate. For the estimates of the high-order tangential derivatives, using the strategy of the $L^2$ estimate,   we apply the estimates of  conormal derivatives.  Here, the most complicated and elaborate parts are to deal with the commutator estimates because the conormal derivatives do not commute with the usual derivatives.

Based on the  obtained the estimates of $L^2$ and tangential derivatives, we
then focus on the most difficult estimates of the paper, the normal derivative estimates, high-order normal derivatives and mixed derivatives of the solutions. This part is highly non-trivial and the strategy is quite roundabout: we first derive the normal derivatives estimates of density $\rho$, then recover the estimates for normal derivatives of $u$ and $\phi$ by the equations. In the first part of this strategy of  the normal derivatives estimates of density $\rho$, there appears a highly singular term,  the crossing term $\partial_{x_3}\rho \partial^2_{x_3} u$.  To deal with this highly singular term,  an interesting cancelation is employed to obtain a transport equation \eqref{nor6} for the normal derivative of $\rho$ with the source terms which can be estimated by using the obtained tangential estimates. After we obtain the estimates of the normal derivative of $\rho$,  the estimates of the normal derivative of the vertical velocity and electric potential can be derived by the static estimates. Since the boundary is characteristic, we do not  have any information on the normal derivative of the horizontal velocity. Motivated by the work of N. Masmoudi, F. Rousset \cite{M-R-2012}, we estimate the vorticity for which the corresponding boundary conditions can be determined.  Finally, the estimates of high-order normal derivatives and mixed derivatives can be obtained step by step.

We mention that the analysis in the present paper can be extended to the corresponding isentropic model
\begin{equation}
\left\{
\begin{array}{lll}
\rho_t+\nabla\cdot(\rho u)=0,\\
(\rho u)_t+\nabla\cdot(\rho u\otimes u)+\nabla p=\rho\nabla\phi+\mu'\triangle u+(\mu'+\nu')\nabla\nabla\cdot u,\\
\lambda\triangle\phi+e^{-\phi}=\rho,
\end{array}
\right.\nonumber
\end{equation}
where the pressure $p=p(\rho)$ is a smooth function of $\rho$ with $p'(\rho)>0$ for $\rho>0$ and the limiting system  becomes
\begin{equation}        
\left\{
\begin{array}{lll}
\rho_t+\nabla\cdot(\rho u)=0,\\
u_t+u\cdot\nabla u+\nabla (h(\rho)+\ln\rho)=0,
\end{array}
\right.\nonumber
\end{equation}
where $h(\rho)$ is determined through
$$h(\rho)=\int^\rho_{1}\frac{p'(\tau)}{\tau}d\tau,$$
for $\rho>0$.

\section{Proof of the main theorem--Theorem \ref{thm3.1}}

In this section, we are going to prove our main Theorem \ref{thm3.1}. We give some preliminaries in Subsection 3.1 and  the local well-posedness with the fixed small parameter and a priori assumption in Subsection 3.2. In the next three subsections, we will show the $L^2$ estimate, tangential estimates and normal estimates of the solutions, respectively. Finally, the proof of the main theorem is completed.

\subsection{Preliminary of Conormal Sobolev Spaces}

The proof of the main results in this paper relies on the conormal Sobolev spaces, so we give a short introduction here. To define the spaces, let us introduce the following tangential vector fields of the boundary
\begin{align}
Z_{1,2}=\partial_{x_1,x_2},\qquad Z_{3}=\psi(x_3)\partial_{x_3},\nonumber
\end{align}
where $\psi(x_3)=\frac{x_3}{1+x_3}$. Then, the conormal Sobolev space $H^m_{co}(\Omega)$ is defined as the set of functions $f(x)\in L^2(\Omega)$ such that the conormal derivatives of order at most $m$ of $f$ are also in $L^2(\Omega)$.

Next, for our purpose, we need to add another vector field $Z_0=\partial_t$ to the set of conormal derivatives. Setting
\begin{align}
Z^\alpha=Z_0^{\alpha_0}Z_1^{\alpha_1}Z_2^{\alpha_2}Z_3^{\alpha_3},\;\;\;\mbox{for}\;\;\alpha=(\alpha_0,\alpha_1,\alpha_2,\alpha_3), \nonumber
\end{align}
we define the conormal Sobolev spaces $H^m_{co}([0,T]\times\Omega)$ for an integer $m$ as
\begin{align}
H^m_{co}([0,T]\times\Omega)=\Big\{f:[0,T]\times \Omega\rightarrow \mathbb{R}^d|\;Z^\alpha f\in L^2([0,T]\times\Omega), \forall\;|\alpha|\leq m\Big\}.\nonumber
\end{align}
In our proof, we also need the following space
\begin{align}
X^m_{T}(\Omega)=\Big\{f:[0,T]\times \Omega\rightarrow \mathbb{R}^d|\;\partial^k_t f\in L^\infty([0,T];H^{m-k}_{co}(\Omega)), \forall \;k\leq m\Big\}.\nonumber
\end{align}
Introducing the semi-norms
\begin{align}
\|f\|_{\mathcal{H}^{m}_{co}}=\sum_{|\alpha|\leq m}\|Z^\alpha f(t)\|_{L^2(\Omega)},\nonumber
\end{align}
we can construct the following norm for $X^m_{T}(\Omega)$
\begin{align}
\|f\|^2_{\mathcal{H}^{m}_{co,T}}=\sup_{[0,T]}\|f\|^2_{\mathcal{H}^{m}_{co}}.\nonumber
\end{align}
Also, we define
\begin{align}
\|f\|_{\mathcal{H}^{m}}=\sum_{|\alpha|\leq m}\| \partial_t^{\alpha_0}\partial_1^{\alpha_1}\partial_2^{\alpha_2}\partial_3^{\alpha_3}f(t)\|_{L^2(\Omega)}.\nonumber
\end{align}
Now, we give some preliminary properties of the conormal derivatives. By a straightforward calculation, one can show that
\begin{align}
[Z_i,\partial_3]&=0, \qquad i=0,1,2,\nonumber\\
[Z_3,\partial_3]&=-\psi'(x_3)\partial_3, \label{tool1}\\
[Z^m_3,\partial_3]&=\sum_{\beta=0}^{m-1}\psi_{\beta,m}(x_3)Z_3^\beta\partial_3=\sum_{\beta=0}^{m-1}\psi^{\beta,m}(x_3)\partial_3Z_3^\beta,\label{tool2}\\
[Z^m_3,\partial_{33}]&=\sum_{\beta=0}^{m-1}\psi_{1,\beta,m}(x_3)Z_3^\beta\partial_3+\sum_{\beta=0}^{m-1}\psi_{2,\beta,m}(x_3)Z_3^\beta\partial_{33}\nonumber\\
&=\sum_{\beta=0}^{m-1}\psi^{1,\beta,m}(x_3)\partial_3Z_3^\beta+\sum_{\beta=0}^{m-1}\psi^{2,\beta,m}(x_3)\partial_{33}Z_3^\beta,\label{tool3}
\end{align}
where all the $\psi$'s  are bounded smooth functions. For the proof of the above equalities, we refer to \cite{Pad}.
Furthermore, the following trace estimate is standard
\begin{align}
|f|^2_{H^s(\partial\Omega)}\leq C(\|\nabla f\|_{\mathcal{H}^{m_1}_{co}}+\|f\|_{\mathcal{H}^{m_1}_{co}})\|f\|_{\mathcal{H}^{m_2}_{co}}, \qquad m_1+m_2\geq2s\geq0.
\end{align}

\subsection{Local well-posedness and a priori assumption} The proof of our main theorem, Theorem \ref{thm3.1}, is based on the local well-posedness of the system \eqref{2.4}, \eqref{2.7} and \eqref{2.8} for fixed $\epsilon$. In fact, for any small but fixed $\epsilon>0$, we can solve the Poisson equation in \eqref{2.4} and express $\phi$ in terms of $\rho$. Moreover, $\nabla \phi$ can be seen as a semi-linear term of $\rho$, thus the local existence of solution can be obtained by using similar method to the local well-posedness of the compressible Navier-Stokes equations with Navier boundary condition, c.f. \cite{Hoff}.

The proof of Theorem \ref{thm3.1} is based on this local well-posedness by using a bootstrap argument.  In the proof, we make the following  a priori assumption : \\
If $(\rho,u,\phi)$  is a smooth solution to \eqref{2.4}-\eqref{2.8} on $[0,T]$ , then
\begin{align}\label{apriori}\Lambda(t):=\|(\rho,u,\phi,\epsilon\nabla\phi)\|^2_{\mathcal{H}^3}+\epsilon^2\int_0^t\|\nabla u\|^2_{\mathcal{H}^3}d\tau\leq \epsilon^{2}, \end{align}
for $t\in [0, T].$

Under this a priori assumption, we will prove the following a priori estimate:

\begin{prop}\label{mainproposition} Let $(\rho,u,\phi)$  be a smooth solution to \eqref{2.4}-\eqref{2.8} on $[0,T]$ satisfying a priori assumption \eqref{apriori}, then the following estimate holds
\begin{align}\label{uniform}
\sup_{t\in[0, T]}\|(\rho,u,\phi,\epsilon\nabla\phi)(t)\|^2_{\mathcal{H}^3}+\epsilon^2\int_0^T\|\nabla u\|^2_{\mathcal{H}^3}\leq C\epsilon^{2K-10},
\end{align}
for a constant $C$ independent of $\epsilon$.
\end{prop}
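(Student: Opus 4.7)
The plan is to close the bootstrap by deriving \eqref{uniform} from \eqref{apriori}: since $K>6$ we have $2K-10>2$, so the conclusion strictly improves the a priori assumption for $\epsilon$ small and the bootstrap closes. The estimates proceed in three nested layers following the strategy of the introduction: an $L^2$ estimate, conormal tangential estimates of order up to three, and normal derivative estimates (including higher and mixed orders). The source terms in \eqref{2.4} produced by the construction of the approximate solution carry the favorable prefactor $\epsilon^{K}$, and by \eqref{2.5} each normal derivative they absorb costs at most $\epsilon^{-1}$, so for $|\alpha|\leq 3$ the worst contribution is of size $\epsilon^{K-3}$; squaring yields $\epsilon^{2K-6}$, which after additional losses coming from the strong boundary layer coefficients (each normal derivative of $\rho_a$ being of size $\epsilon^{-1}$ in the layer) still leaves margin against the target $\epsilon^{2K-10}$.

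For the $L^2$ and tangential estimates I would multiply the continuity equation in \eqref{2.4} by $\rho/(\rho+\rho_a)$ and the momentum equation by $\rho_a u$, integrate by parts, and exploit the antisymmetry between the pressure-like term $T^i\nabla\rho/(\rho+\rho_a)$ and the density transport $\rho\nabla\cdot u$ to cancel the most singular contributions generated by the interaction of the strong density boundary layer with $\nabla u$. The Poisson equation $\epsilon^2\triangle\phi=\rho-e^{-\phi_a}(e^{-\phi}-1)+\epsilon^{K+1}R_\phi$ is tested against $\phi$; linearizing $e^{-\phi}-1\approx -\phi$ via the smallness from \eqref{apriori} and Sobolev embedding yields $\|\phi\|^2+\epsilon^2\|\nabla\phi\|^2\leq C\|\rho\|^2+C\epsilon^{2K+2}\|R_\phi\|^2$, and analogous bounds hold at each differentiated level. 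The viscous boundary terms produced by integration by parts are disposed of with a favorable sign through the Navier-slip relation $u_i-\partial_{x_3}u_i=0$ ($i=1,2$) combined with $u_3=0$ at $\{x_3=0\}$. For the higher-order tangential estimates one applies $Z^\alpha$ with $\alpha_3=0$; since $Z_0,Z_1,Z_2$ commute with $\partial_{x_3}$, the awkward commutators in \eqref{tool1}-\eqref{tool3} do not arise, and the nonlinearities are controlled by Moser-type inequalities in $\mathcal{H}^m_{co}$, with the background quantities $\rho_a,\nabla\rho_a,\nabla u_a,\triangle u_a$ controlled tangentially by Proposition \ref{thm2.1}.

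The main obstacle is the normal-derivative estimate for $\rho$. A direct energy method on $\partial_{x_3}\rho$ through the momentum equation would generate the extremely singular crossing term $\partial_{x_3}\rho\cdot\partial_{x_3}^2 u$, of pointwise size $\epsilon^{-2}$ inside the boundary layer, which cannot be absorbed by the viscous dissipation. Following the roundabout strategy outlined in the introduction, I would instead differentiate the continuity equation in $x_3$ and substitute $\partial_{x_3}^2 u_3$ using the third component of the momentum equation, identifying an algebraic cancellation of the worst term to arrive at a transport equation of the schematic form
\begin{equation*}
(\partial_t+(u+u_a)\cdot\nabla)(\partial_{x_3}\rho)+\mathcal{L}(\partial_{x_3}\rho)=\mathcal{S},
\end{equation*}
where $\mathcal{L}$ is a bounded multiplier and $\mathcal{S}$ contains only tangential derivatives of $(\rho,u,\phi)$ together with remainders already controlled in the previous step. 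A standard transport energy estimate then yields $\|\partial_{x_3}\rho\|_{\mathcal{H}^2_{co}}$. Once this is in hand, $\partial_{x_3}u_3$ is recovered algebraically from the continuity equation and $\partial_{x_3}\phi$ from the Poisson equation (the ``static'' estimates). Because the boundary is characteristic, the horizontal components $\partial_{x_3}u_i$ ($i=1,2$) cannot be recovered algebraically; following Masmoudi--Rousset \cite{M-R-2012}, I would estimate the vorticity $\omega=\nabla\times u$, whose relevant components vanish on $\{x_3=0\}$ as a consequence of the Navier-slip condition, so that a standard energy inequality for $\omega$ closes the estimate and simultaneously feeds the dissipation $\epsilon^2\int_0^T\|\nabla u\|^2_{\mathcal{H}^3}d\tau$ appearing in \eqref{uniform}. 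Higher-order normal and mixed derivatives are then treated inductively, peeling off one normal derivative at a time via \eqref{tool2}-\eqref{tool3} and applying the same transport/static/vorticity trio at each level, with the commutator terms controlled using the tangential bounds already obtained.
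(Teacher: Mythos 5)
Your proposal follows the paper's strategy closely in its overall architecture: the three-layer bootstrap ($L^2$, conormal tangential, normal/vorticity), the cancellations in the $L^2$ energy using the continuity and Poisson equations, the combination of $\partial_{x_3}$ applied to the continuity equation with the $u_3$-momentum equation to cancel the singular $\partial_{x_3}\rho\,\partial^2_{x_3}u_3$ cross term, the vorticity estimate \`a la Masmoudi--Rousset for $\partial_{x_3}u_y$, and the algebraic (``static'') recovery of normal derivatives of $u_3$ and $\phi$. Two details in your sketch are off, and one of them matters for the $\epsilon$-bookkeeping.

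The more consequential one concerns the $\partial_{x_3}\rho$ equation. You write it schematically as $(\partial_t+(u+u_a)\cdot\nabla)\partial_3\rho+\mathcal{L}\,\partial_3\rho=\mathcal{S}$ with $\mathcal{L}$ a \emph{bounded} multiplier and then invoke a ``standard transport energy estimate.'' The paper's equation \eqref{nor6} in fact has the form
\begin{equation*}
(2\mu+\nu)\epsilon^2\Big(\partial_t R+(u_a+u)\cdot\nabla R+\cdots\Big)+(\rho_a+\rho)\,R=\mathrm{sources},\qquad R:=\partial_3\rho,
\end{equation*}
so the transport operator carries an $\epsilon^2$ weight while $R$ itself has a uniformly positive $O(1)$ coefficient $(\rho_a+\rho)$. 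Multiplying by $R$ and integrating yields the \emph{damped} bound $\epsilon^2\|R(T)\|^2+\int_0^T\|R\|^2\lesssim\int_0^T\|\mathrm{sources}\|^2$; the time-integrated $\|R\|^2$ on the left is indispensable for the downstream propositions. If one normalizes the transport to $O(1)$ as in your schematic, then $\mathcal{L}$ becomes $O(\epsilon^{-2})$, $\mathcal{S}$ also picks up an $\epsilon^{-2}$, and a naive Gronwall argument is off by $\epsilon^{-2}$ relative to Proposition \ref{NR}. You must keep the $\epsilon^2$-weighted form and exploit the damping, not a Gronwall transport bound.

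Second, you restrict the conormal tangential estimates to $Z^\alpha$ with $\alpha_3=0$, asserting that the commutators in \eqref{tool1}--\eqref{tool3} then never arise. The paper's Proposition \ref{tan} includes $Z_3=\psi(x_3)\partial_{x_3}$ among the tangential fields and explicitly estimates the resulting commutators (the terms $J_{51}$, $J_{66}$, $J_{67}$). That $Z_3$ control is then consumed in the normal-derivative ladder: for instance Proposition \ref{NZR} bounds $Z\partial_3\rho$ for all $Z$, including $Z_3\partial_3\rho$, and is proved \emph{before} Proposition \ref{N3R} (which bounds $\partial_3^2\rho$) and is used in Proposition \ref{NZ3u}. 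Dropping $Z_3$ from the tangential stage therefore forces a reorganization of the ladder whose closure is not obvious; you would need to verify carefully that either the a priori assumption \eqref{apriori} backfills the missing $Z_3$ bounds at the required $\epsilon$-rate, or that the relevant propositions can be reordered without circularity. The paper's inclusion of $Z_3$ in the tangential phase is what sidesteps this issue.
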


Since $K>6$ ($2K-10>2$), the estimate \eqref{uniform} closes the bootstrap argument, and completes the proof of Theorem \ref{thm3.1}.
The remaining part of this paper is devoted to prove Proposition \ref{mainproposition}.

\subsection{$L^2$ estimates}
In this subsection, we shall first give the $L^2$ estimates of the solution.
\begin{prop} \label{L2}
Under the assumptions of Theorem \ref{thm3.1}, if $(\rho,u,\phi)$ is a smooth solution of \eqref{2.4}-\eqref{2.8} on $[0,T]$ satisfying a priori assumption \eqref{apriori},
then the following estimate holds
\begin{align}
\|(\rho&,u,\phi,\epsilon\nabla\phi)(t)\|^2+\epsilon^2\int_0^t\|\nabla u\|^2d\tau\leq C\epsilon^{2K}\nonumber
\end{align}
for $t\in [0, T]$.
\end{prop}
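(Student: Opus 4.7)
The plan is a weighted $L^{2}$ energy estimate for system \eqref{2.4}, pairing each equation with a multiplier chosen so that the $O(\epsilon^{-1})$ contributions coming from the strong density boundary layer cancel. I will test the density equation by $T^{i}\rho/\rho_{a}$, the momentum equation by $\rho_{a}u$, and rewrite the coupling term $\int\rho_{a}u\cdot\nabla\phi$ via integration by parts combined with the density and Poisson equations. Using the boundary conditions $u_{3}|_{x_{3}=0}=0$ and $\phi|_{x_{3}=0}=0$, the left-hand side assembles into the coercive energy
$$
\mathcal{E}(t)=\frac{T^{i}}{2}\int_{\Omega}\frac{\rho^{2}}{\rho_{a}}+\frac{1}{2}\int_{\Omega}\rho_{a}|u|^{2}+\frac{1}{2}\int_{\Omega}e^{-\phi_{a}}\phi^{2}+\frac{\epsilon^{2}}{2}\int_{\Omega}|\nabla\phi|^{2},
$$
which is equivalent to $\|(\rho,u,\phi,\epsilon\nabla\phi)\|^{2}$, together with the viscous dissipation $\epsilon^{2}\int(\mu|\nabla u|^{2}+(\mu+\nu)|\nabla\cdot u|^{2})$.

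Two cancellations drive the argument. First, the non-singular pressure--divergence pair: expanding $\frac{\rho_{a}}{\rho_{a}+\rho}=1-\frac{\rho}{\rho_{a}+\rho}$ in $T^{i}\int\rho_{a}u\cdot\nabla\rho/(\rho+\rho_{a})$ and integrating by parts gives $-T^{i}\int\rho\,\nabla\cdot u$, which cancels exactly the term $+T^{i}\int\rho\,\nabla\cdot u$ emerging from $T^{i}\int(\rho/\rho_{a})\nabla\cdot(\rho_{a}u)$ on the density side. Second, and more delicately, the two singular contributions of order $\epsilon^{-1}$, namely $+\int(T^{i}\rho/\rho_{a})\nabla\rho_{a}\cdot u$ (density side) and $-T^{i}\int u\cdot\nabla\rho_{a}\,\rho/(\rho_{a}+\rho)$ (pressure term in the momentum equation), combine via $\frac{1}{\rho_{a}}-\frac{1}{\rho_{a}+\rho}=\frac{\rho}{\rho_{a}(\rho_{a}+\rho)}$ into the cubic residual
$$
J:=\int_{\Omega}\frac{T^{i}\rho^{2}\,u\cdot\nabla\rho_{a}}{\rho_{a}(\rho_{a}+\rho)},
$$
so the linear-in-$\rho$ singular terms collapse to a quadratic-in-$\rho$ one. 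The coupling $-\int\rho_{a}u\cdot\nabla\phi$ is treated by integration by parts into $\int\phi\,\nabla\cdot(\rho_{a}u)$, then by substituting $\nabla\cdot(\rho_{a}u)$ from the density equation and $\partial_{t}\rho$ through the Poisson equation; linearising $e^{-\phi}-1\approx-\phi$ produces the time derivative $\tfrac{1}{2}\partial_{t}(\epsilon^{2}\|\nabla\phi\|^{2}+\int e^{-\phi_{a}}\phi^{2})$ up to cubic corrections, supplying the $\phi$-part of $\mathcal{E}$.

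The main obstacle is bounding the surviving integral $J$. Decomposing $u\cdot\nabla\rho_{a}=u_{y}\cdot\nabla_{y}\rho_{a}+u_{3}\partial_{3}\rho_{a}$, the tangential piece is harmless since $\nabla_{y}\rho_{a}=O(1)$ uniformly in $\epsilon$. The normal piece contains the singular profile $\partial_{3}\rho_{a}\sim\epsilon^{-1}(\partial_{z}\Upsilon^{0})(t,y,x_{3}/\epsilon)$, concentrated in a layer of width $O(\epsilon)$; the boundary condition $u_{3}|_{x_{3}=0}=0$ gives $|u_{3}(t,y,x_{3})|\leq x_{3}^{1/2}\|\partial_{3}u_{3}(t,y,\cdot)\|_{L^{2}(0,x_{3})}$, and a change of variable $z=x_{3}/\epsilon$ in the layer integral yields the uniform Hardy-type bound
$$
\|u_{3}\,\partial_{3}\rho_{a}\|_{L^{2}(\Omega)}\leq C\|\partial_{3}u_{3}\|_{L^{2}(\Omega)}.
$$
Hence $|J|\leq C\|\rho\|_{L^{\infty}}\|\rho\|_{L^{2}}\|\partial_{3}u_{3}\|_{L^{2}}$; Cauchy--Schwarz with a small weight absorbs $\|\partial_{3}u_{3}\|_{L^{2}}^{2}$ into $\tfrac{1}{2}\epsilon^{2}\|\nabla u\|^{2}$, while the remaining factor $\epsilon^{-2}\|\rho\|_{L^{\infty}}^{2}\|\rho\|_{L^{2}}^{2}$ is rendered $\ll 1$ by the a priori bound \eqref{apriori} and the conormal Sobolev embedding.

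The remaining terms are routine. The transport and compression nonlinearities contribute at most $C(1+\|\rho_{a}\|_{W^{1,\infty}}+\|u_{a}\|_{W^{1,\infty}})\mathcal{E}$, uniformly controlled by Proposition \ref{thm2.1}. The term $h(\rho,\rho_{a},u_{a})$ has amplitude $O(\epsilon)$ because $U^{0}\equiv 0$ (Proposition \ref{thm2.1}(iii)) forces $\epsilon^{2}\Delta u_{a}=O(\epsilon)$ in $L^{\infty}$, so $|\int h\cdot\rho_{a}u|\leq C\epsilon\,\mathcal{E}$. The remainders $\epsilon^{K}R_{\rho},\epsilon^{K}R_{u},\epsilon^{K+1}R_{\phi}$ contribute at most $C\epsilon^{K}\mathcal{E}^{1/2}$ via \eqref{2.5}. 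Combining these estimates gives
$$
\frac{d}{dt}\mathcal{E}(t)+\frac{\epsilon^{2}}{2}\int_\Omega\bigl(\mu|\nabla u|^{2}+(\mu+\nu)|\nabla\cdot u|^{2}\bigr)\leq C\mathcal{E}(t)+C\epsilon^{2K},
$$
and Gronwall's lemma, together with the initial bound $\mathcal{E}(0)\leq C\epsilon^{2K+2}$ from \eqref{2.7}, closes the estimate $\mathcal{E}(t)+\epsilon^{2}\int_{0}^{t}\|\nabla u\|^{2}d\tau\leq C\epsilon^{2K}$ on $[0,T]$.
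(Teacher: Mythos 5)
Your proposal is correct and follows essentially the same route as the paper: pair a $\rho/\rho_a$-type multiplier on the continuity equation with a density-weighted momentum multiplier so the two $O(\epsilon^{-1})$ terms linear in $\rho$ collapse to the cubic residual $J=\int T^i\rho^2\,u\cdot\nabla\rho_a/(\rho_a(\rho_a+\rho))$, handle the electric coupling by integrating by parts and substituting the continuity and Poisson equations (linearising $e^{-\phi}-1$), and close with Gronwall. The only tactical difference is in bounding $J$: the paper simply observes that $\|\rho\|_{L^\infty}\|\nabla\rho_a\|_{L^\infty}\leq C\epsilon\cdot C\epsilon^{-1}=C$ under the a priori bound, giving $|J|\leq C\|\rho\|\,\|u\|$ with no recourse to the viscous dissipation, whereas you split $u\cdot\nabla\rho_a$ into tangential and normal parts and use the Hardy-type trace bound $\|u_3\partial_3\rho_a\|_{L^2}\leq C\|\partial_3 u_3\|_{L^2}$, then absorb $\|\partial_3 u_3\|^2$ into $\epsilon^2\|\nabla u\|^2$ at the cost of an $\epsilon^{-2}$ that the factor $\|\rho\|_{L^\infty}^2\lesssim\epsilon^2$ compensates; this works but is more roundabout, and the same Hardy observation is in any case needed (as in the paper) to bound the material derivative $\|(\partial_t+(u_a+u)\cdot\nabla)\rho_a\|_{L^\infty}$ appearing from the transport weight.
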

\begin{proof} Rewrite  equation \eqref{2.4} as
\begin{equation}   \label{3.1}       
\left\{
\begin{array}{lll}
\partial_t\rho+(u_a+u)\cdot\nabla\rho+(\rho_a+\rho)\nabla\cdot u+u\cdot\nabla\rho_a+\rho\nabla\cdot u_a=\epsilon^KR_\rho,\\
(\rho_a+\rho)\big(\partial_t u+(u_a+u)\cdot\nabla u+u\cdot\nabla u_a\big)+(\nabla \rho-\frac{\nabla\rho_a}{\rho_a}\rho)\\
\qquad\qquad=(\rho_a+\rho)\nabla\phi+\mu\epsilon^2\triangle u+(\mu+\nu)\epsilon^2\nabla\nabla\cdot u+H(\rho, \rho_a, u_a)+\epsilon^KR_u,\\
\epsilon^2\triangle\phi=\rho-e^{-\phi_a}(e^{-\phi}-1)+\epsilon^{K+1}R_\phi,
\end{array}
\right.
\end{equation}
where we have set $T^i=1$ without loss of generality and
\begin{align}
H(\rho, \rho_a, u_a)=-\frac{\mu\epsilon^2\rho}{\rho_a}\triangle u_a-\frac{(\mu+\nu)\epsilon^2\rho}{\rho_a}\nabla\nabla\cdot u_a.\nonumber
\end{align}
Then $\eqref{3.1}_1\times \frac{1}{\rho_a+\rho}\rho+\eqref{3.1}_2\cdot u$ gives
\begin{align} \label{3.2}
\frac{1}{\rho_a+\rho}\frac{1}{2}&\partial_t\rho^2+\frac{1}{\rho_a+\rho}\frac{1}{2}(u_a+u)\cdot\nabla \rho^2+\rho\nabla\cdot  u\nonumber\\
&
+\frac{1}{\rho_a+\rho}\rho u\cdot\nabla\rho_a+\frac{1}{\rho_a+\rho}\rho^2\nabla\cdot u_a\nonumber\\
&+(\rho_a+\rho)\big(\frac{1}{2}\partial_t  |u|^2+\frac{1}{2}(u_a+u)\cdot\nabla | u|^2+ u\cdot\nabla u_a\cdot u\big)\nonumber\\
&+(u\cdot\nabla \rho -\frac{\rho}{\rho_a}u\cdot\nabla\rho_a \big)\nonumber\\
&=(\rho_a+\rho)u\cdot\nabla\phi+\mu\epsilon^2u\cdot\triangle u+(\mu+\nu)\epsilon^2u\cdot\nabla\nabla\cdot u\nonumber\\
&+ H(\rho, \rho_a, u_a)\cdot u+\epsilon^K\frac{1}{\rho_a+\rho}\rho R_\rho+\epsilon^Ku\cdot R_u.
\end{align}
Integrating \eqref{3.2} over $[0,T]\times \Omega$, and integrating by parts, we get
\begin{align} \label{3.3}
&\frac{1}{2}\int_\Omega\frac{1}{\rho_a+\rho}\rho^2-\frac{1}{2}\int_0^T\int_\Omega\rho^2\partial_t\frac{1}{\rho_a+\rho}
-\frac{1}{2}\int_0^T\int_\Omega\nabla\cdot\Big(\frac{u_a+u}{\rho_a+\rho}\Big)\rho^2\nonumber\\
&+\int_0^T\int_\Omega\frac{1}{\rho_a+\rho}\rho^2\nabla\cdot u_a+\int_0^T\int_\Omega(\rho_a+\rho)u\cdot\nabla u_a\cdot u+\frac{1}{2}\int_\Omega(\rho_a+\rho)|u|^2\nonumber\\
&-\frac{1}{2}\int_0^T\int_\Omega|u|^2\partial_t(\rho_a+\rho)
-\frac{1}{2}\int_0^T\int_\Omega|u|^2\nabla\cdot\Big((\rho_a+\rho)(u_a+u)\Big)\nonumber\\
&+\int_0^T\int_\Omega\frac{1}{\rho_a+\rho}\rho u\cdot\nabla\rho_a-\int_0^T\int_\Omega\frac{\rho}{\rho_a}u\cdot\nabla\rho_a \nonumber\\
=&\;\frac{1}{2}\int_\Omega\Big(\frac{1}{\rho_a+\rho}\rho^2\Big)(0)+\frac{1}{2}\int_\Omega\Big((\rho_a+\rho)|u|^2\Big)(0)\nonumber\\
&+\int_0^T\int_\Omega(\rho_a+\rho)u\cdot\nabla\phi+\mu\epsilon^2\int_0^T\int_\Omega u\cdot \triangle u+(\mu+\nu)\epsilon^2\int_0^T\int_\Omega u\cdot \nabla\nabla\cdot u\nonumber\\
&+ \int_0^T\int_\Omega H(\rho, \rho_a, u_a)u
+\epsilon^K\int_0^T\int_\Omega\frac{1}{\rho_a+\rho}\rho R_\rho+\epsilon^K\int_0^T\int_\Omega u\cdot R_u.
\end{align}
Since
\begin{align}
-\mu\epsilon^2\int_0^T\int_\Omega u\triangle u&=\mu\epsilon^2\int_0^T\int_\Omega |\nabla u|^2+\mu\epsilon^2\int_0^T\int_{\partial\Omega} |u_y|^2,\nonumber
\end{align}
and
\begin{align}
-(\mu+\nu)\epsilon^2\int_0^T\int_\Omega u\cdot \nabla\nabla\cdot u&=(\mu+\nu)\epsilon^2\int_0^T\int_\Omega |\nabla\cdot u|^2,\nonumber
\end{align}
we get from \eqref{3.3} that
\begin{align} \label{3.4}
&\|\rho\|^2+\|u\|^2+\epsilon^2\int_0^T\|\nabla u\|^2+\epsilon^2\int_0^T|u_h|_{L^2(\partial\Omega)}^2\nonumber\\
\leq & \int_0^T\int_\Omega\rho^2(\partial_t+(u_a+u)\cdot\nabla)\frac{1}{\rho_a+\rho}+\int_0^T\int_\Omega\rho^2\frac{1}{(\rho_a+\rho)}\nabla\cdot(u_a+u)\nonumber\\
&+\int_0^T\int_\Omega|u|^2(\partial_t+(u_a+u)\cdot\nabla)(\rho_a+\rho)+\int_0^T\int_\Omega|u|^2(\rho_a+\rho)\nabla\cdot(u_a+u)\nonumber\\
&+\int_0^T\int_\Omega(\frac{1}{\rho_a+\rho}-\frac{1}{\rho_a})\rho u\cdot\nabla\rho_a+\int_0^T\int_\Omega(\rho_a+\rho)u\cdot\nabla\phi \nonumber\\
&+C\int_0^T(\|\rho\|^2+\|u\|^2)+C\epsilon^{2K}.
\end{align}
We need to consider the estimates of the right-hand side terms of \eqref{3.4}. First, we have
\begin{align}
\|(\partial_t+(u_a+u)\cdot\nabla)(\rho_a+\rho)\|_{L^\infty}\leq & \|(\partial_t+u_a\cdot\nabla)\rho_a\|_{L^\infty}+\|u\cdot\nabla\rho_a\|_{L^\infty}\nonumber\\
&+\|(\partial_t+(u_a+u)\cdot\nabla)\rho\|_{L^\infty}\nonumber.
\end{align}
By the properties of the approximation solutions and the a priori assumptions, we find that
\begin{align}
\|(\partial_t+u_a\cdot\nabla)\rho_a\|_{L^\infty}&\leq C,\nonumber\\
\|(\partial_t+(u_a+u)\cdot\nabla)\rho\|_{L^\infty}&\leq C(\|\partial_t\rho\|_{L^\infty}+\|\nabla\rho\|_{L^\infty})\leq C.\nonumber
\end{align}
As $u_3$ vanishes on the boundary, we have $|u_3|\leq x_3\|\nabla u\|_{L^\infty}$. Hence,
\begin{align}
\|u\cdot\nabla\rho_a\|_{L^\infty}&\leq \|u_y\cdot\nabla_y\rho_{a}\|_{L^\infty}+\|u_3\partial_3\rho_{a}\|_{L^\infty}\nonumber\\
&\leq C+\|\nabla u\|_{L^\infty}\|x_3\partial_3\rho_{a}\|\nonumber\\
&\leq C+\|\nabla u\|_{L^\infty}\|z\partial_z\rho_{a}\|\leq C,\nonumber
\end{align}
where we have used the fast decay property of the boundary layer profiles in $\rho_a$.
So we get
\begin{align} \label{L^infty-1}
\|(\partial_t+(u_a+u)\cdot\nabla)(\rho_a+\rho)\|_{L^\infty}&\leq C.
\end{align}
Next, as the $U^0=0$, it is easy to observe that
\begin{align} \label{L^infty-2}
\|\frac{1}{(\rho_a+\rho)}\nabla\cdot(u_a+u)\|_{L^\infty}+\|(\rho_a+\rho)\nabla\cdot(u_a+u)\|_{L^\infty}\leq C.
\end{align}
Moreover, by the a priori assumptions, one has
\begin{align}
\Big|\int_0^T\int_\Omega(\frac{1}{\rho_a+\rho}-\frac{1}{\rho_a})\rho u\cdot\nabla\rho_a \Big|&\leq \int_0^T \|\rho\|_{L^\infty}\|\rho\| \|u\| \|\nabla\rho_a\|_{L^\infty}\nonumber\\
&\leq \int_0^T \frac{1}{\epsilon}\|\rho\|_{L^\infty}(\|\rho\|^2+ \|u\|^2)\nonumber\\
&\leq C\int_0^T (\|\rho\|^2+ \|u\|^2).\nonumber
\end{align}
Using the above estimates, we can reduce \eqref{3.4} into
\begin{align} \label{3.5}
\|\rho\|^2&+\|u\|^2+\epsilon^2\int_0^T\|\nabla u\|^2+\epsilon^2\int_0^T|u_h|_{L^2(\partial\Omega)}^2\nonumber\\
&\leq \int_0^T\int_\Omega(\rho_a+\rho)u\cdot\nabla\phi+C\int_0^T(\|\rho\|^2+\|u\|^2)+C\epsilon^{2K}.
\end{align}
Let us consider the first term on the right-hand side of \eqref{3.5}. Using $\eqref{3.1}_1$ and integrating by parts, we get
\begin{align} \label{3.6}
\int_0^T\int_\Omega(\rho_a+\rho)u\cdot\nabla\phi&=-\int_0^T\int_\Omega\nabla\cdot\big((\rho_a+\rho)u\big)\phi\nonumber\\
&=\int_0^T\int_\Omega\big(\partial_t\rho+\nabla\cdot(\rho u_a)-\epsilon^KR_\rho\big)\phi\nonumber\\
&=\int_0^T\int_\Omega\phi\partial_t\rho-\rho u_a\cdot\nabla\phi-\epsilon^KR_\rho\phi\nonumber\\
&:=\sum_{j=1}^3I_j.
\end{align}
By $\eqref{3.1}_3$, we can estimate $I_1$ as
\begin{align}
I_1=&\int_0^T\int_\Omega \phi\Big(\epsilon^2\triangle\partial_t\phi+\partial_t\big(e^{-\phi_a}(e^{-\phi}-1)\big)-\epsilon^{K+1}\partial_tR_\phi\Big)\nonumber\\
=&-\frac{1}{2}\epsilon^2\int_\Omega|\nabla\phi|^2+\frac{1}{2}\epsilon^2\int_\Omega|\nabla\phi(0)|^2
-\frac{1}{2}\int_\Omega e^{-\phi_a}(1+h(\phi))\phi^2\nonumber\\
&+\frac{1}{2}\int_\Omega e^{-\phi_a}(1+h(\phi))\phi^2(0)-\frac{1}{2}\int_0^T\int_\Omega \partial_t\big(e^{-\phi_a}(1+h(\phi))\big)\phi^2\nonumber\\
&-\epsilon^{K+1}\int_0^T\int_\Omega \phi\partial_tR_\phi,\nonumber
\end{align}
where
$$h=h_0(\phi):=-\frac{e^{-\phi}-1+\phi}{\phi}.$$
Since
$$\|\partial_t\big(e^{-\phi_a}(1+h(\phi))\big)\|_{L^\infty}\leq C(\|\phi\|_{L^\infty}+\|Z\phi\|_{L^\infty})\leq C,$$
we have that,
\begin{align}  \label{3.7}
I_1
&\leq -\frac{1}{2}\epsilon^2\int_\Omega|\nabla\phi|^2-\frac{1}{2}\int_\Omega e^{-\phi_a}(1+h(\phi))\phi^2+\int_0^T\|\phi\|^2+C\epsilon^{2K}.
\end{align}
For $I_2$, by integrating by parts, we have
\begin{align} \label{3.8}
I_2=&-\int_0^T\int_\Omega\rho u_a\cdot\nabla\phi\nonumber\\
=&\int_0^T\int_\Omega(\epsilon^2\triangle\phi+e^{-\phi_a}(e^{-\phi}-1)-\epsilon^{K+1}R_\phi) u_a\cdot\nabla\phi\nonumber\\
\leq &\;C\int_0^T\epsilon^2\|\nabla\phi\|^2+\|\phi\|^2+C\epsilon^{2K}.
\end{align}
The estimate of $I_3$ is trivial since
\begin{align} \label{3.9}
I_3&\leq C\int_0^T\|\phi\|^2+C\epsilon^{2K}.
\end{align}
Combining the estimates in \eqref{3.6}-\eqref{3.9}, we get
\begin{align}
\int_0^T\int_\Omega(\rho_a+\rho)u\cdot\nabla\phi\leq &-\frac{1}{2}\epsilon^2\int_\Omega|\nabla\phi|^2-\frac{1}{2}\int_\Omega e^{-\phi_a}(1+h(\phi))\phi^2\nonumber\\
&+\int_0^T(\epsilon^2\|\nabla\phi\|^2+\|\phi\|^2)+C\epsilon^{2K}.\nonumber
\end{align}
This together with \eqref{3.5} yields
\begin{align}
\big(\|\rho\|^2&+\|u\|^2+\|\phi\|^2+\epsilon^2\|\nabla\phi\|^2\big)+\epsilon^2\int_0^T\|\nabla u\|^2+\epsilon^2\int_0^T|u_h|_{L^2(\partial\Omega)}^2\nonumber\\
&\leq C\int_0^T(\|\rho\|^2+\|u\|^2+\|\phi\|^2+\epsilon^2\|\nabla\phi\|^2)+C\epsilon^{2K}.\nonumber
\end{align}
Applying Gronwall's inequality, we can get the desired $L^2$ estimate. Thus the proof of Proposition \ref{L2} is completed.
\end{proof}
\subsection{Tangential estimates}
In this subsection, we turn to the estimates of the high-order tangential derivatives of the solution.
\begin{prop} \label{tan}
For $0\leq j\leq 3$, the following estimates hold
\begin{align}
\|Z^j(\rho&,u,\phi,\epsilon\nabla\phi)(t)\|^2+\epsilon^2\int_0^t\|\nabla Z^ju(\tau)\|^2d\tau\leq C\epsilon^{2K-2j},\nonumber
\end{align}
under the assumptions of Proposition \ref{L2}.
\end{prop}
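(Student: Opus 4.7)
The plan is to establish the tangential estimates by induction on $j$, with the $L^2$ estimate in Proposition \ref{L2} serving as the base case $j=0$. Assuming the inductive hypothesis $\|Z^i(\rho,u,\phi,\epsilon\nabla\phi)\|^2+\epsilon^2\int_0^t\|\nabla Z^i u\|^2\le C\epsilon^{2K-2i}$ for every $i\le j-1$, we fix a multi-index $\alpha$ with $|\alpha|=j$, apply $Z^\alpha$ to the error system \eqref{3.1}, and reproduce the energy scheme of Proposition \ref{L2} at this higher level.

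Applying $Z^\alpha$ to \eqref{3.1}$_1$ produces
\begin{equation*}
\partial_t Z^\alpha\rho+(u_a+u)\cdot\nabla Z^\alpha\rho+(\rho_a+\rho)\nabla\cdot Z^\alpha u=\mathcal{C}_\rho^\alpha+\epsilon^K Z^\alpha R_\rho,
\end{equation*}
with analogous equations for $Z^\alpha u$ and $Z^\alpha\phi$, where $\mathcal{C}^\alpha_\rho,\mathcal{C}^\alpha_u,\mathcal{C}^\alpha_\phi$ collect all commutators of $Z^\alpha$ with transport, with the viscous operator, with the coefficient multiplications, and with the nonlinear terms. Testing the density equation by $Z^\alpha\rho/(\rho_a+\rho)$, the momentum equation by $Z^\alpha u$, integrating by parts (the Navier-slip boundary condition \eqref{2.8} handles the viscous boundary contribution exactly as in Proposition \ref{L2}), and closing the electric-field coupling $\int(\rho_a+\rho)Z^\alpha u\cdot\nabla Z^\alpha\phi$ through the $Z^\alpha$-differentiated Poisson equation so as to generate $\tfrac{d}{dt}(\|Z^\alpha\phi\|^2+\epsilon^2\|\nabla Z^\alpha\phi\|^2)$, one arrives at an inequality of the shape
\begin{equation*}
\|Z^\alpha(\rho,u,\phi,\epsilon\nabla\phi)(t)\|^2+\epsilon^2\int_0^t\|\nabla Z^\alpha u\|^2\le C\epsilon^{2K-2j}+C\int_0^t\bigl(\|Z^\alpha(\rho,u,\phi,\epsilon\nabla\phi)\|^2+\mathcal{R}_j\bigr),
\end{equation*}
where $\mathcal{R}_j$ gathers the remaining commutator contributions.

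The technical heart is the bookkeeping of $\mathcal{R}_j$. By \eqref{tool1}--\eqref{tool3}, $[Z^\alpha,\triangle]u$ reduces to terms of the form $\partial_3 Z_3^\beta u$ and $\partial_3^2 Z_3^\beta u$ with $\beta<\alpha_3$; the latter are handled by one further integration by parts, producing gradient terms of order $j-1$ controlled by $\epsilon^2\int\|\nabla u\|^2_{\mathcal{H}^{j-1}}$ from the inductive hypothesis. The commutators $[Z^\alpha,\rho_a\,\cdot]$ and $[Z^\alpha,u_a\cdot\nabla]$, as well as the nonlinear ones $[Z^\alpha,\rho\,\cdot]$ and $[Z^\alpha,u\cdot\nabla]$, are expanded by Leibniz; the decisive point is that, because the factor $\psi(x_3)=x_3/(1+x_3)$ in $Z_3$ exactly cancels the $\epsilon^{-1}$ singularity of the strong density boundary layer, all conormal derivatives of the approximation profiles stay bounded in $L^\infty$, so the Leibniz pieces are estimated purely by the a priori assumption \eqref{apriori}, Sobolev embedding in the conormal scale, and the lower-order inductive bounds. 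The forcing contribution $\epsilon^K\|Z^\alpha R\|_{L^2}\le C\epsilon^{K-\alpha_3}\le C\epsilon^{K-j}$, obtained from \eqref{2.5} combined with the expansion of $Z_3^{\alpha_3}$ via \eqref{tool2}, is precisely what fixes the power $\epsilon^{2K-2j}$ on the right-hand side.

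The hardest part will be taming the mixed nonlinear commutators in $\mathcal{R}_j$ that couple the strong density layer to conormal derivatives of the viscous operator, in particular terms of the type $[Z^\alpha,(\rho_a+\rho)^{-1}\triangle]u$ and the highest-budget piece of $Z^\alpha(u\cdot\nabla u)$; each such term must be split so that no factor carries more normal derivatives of $u$ than the available $\epsilon^2\int\|\nabla u\|^2_{\mathcal{H}^{j-1}}$ budget, and the leftover is either absorbed into a small fraction of $\epsilon^2\int\|\nabla Z^\alpha u\|^2$ or subsumed into the $C\epsilon^{2K-2j}$ source. Once every piece is distributed this way, Gronwall's inequality closes the level-$j$ estimate and the induction proceeds up to $j=3$, yielding the claim.
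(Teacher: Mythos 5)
Your proposal follows essentially the same route as the paper: induction from the $L^2$ base case, applying $Z^\alpha$ to the rewritten system \eqref{3.1}, testing the density and momentum equations with $Z^\alpha\rho/(\rho_a+\rho)$ and $Z^\alpha u$ respectively, closing the electric coupling $\int(\rho_a+\rho)Z^\alpha u\cdot\nabla Z^\alpha\phi$ by passing through the $Z^\alpha$-differentiated Poisson equation to generate $\tfrac{d}{dt}\bigl(\|Z^\alpha\phi\|^2+\epsilon^2\|\nabla Z^\alpha\phi\|^2\bigr)$, controlling the viscous boundary contribution via the Navier-slip condition, and then chasing commutators with \eqref{tool1}--\eqref{tool3}, Leibniz, and Gronwall.

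One correction to the mechanism you invoke: the assertion that the $\psi(x_3)$ factor in $Z_3$ ``exactly cancels the $\epsilon^{-1}$ singularity'' so that all commutator coefficients are $O(1)$ is overstated. It is true for pure conormal derivatives $Z^\beta\rho_a$, $Z^\beta u_a$, but the term $[Z^\alpha,\nabla\rho_a\cdot]u$ in $\mathcal{C}_\rho$ produces factors $Z^\beta\nabla\rho_a$ whose leading $\nabla$ is a genuine normal derivative, not a conormal one; these factors really are $O(\epsilon^{-1})$ in $L^\infty$. The paper's estimate of $J_{53}$ compensates for this loss not by cancellation but by the inductive smallness $\|u\|^2_{\mathcal{H}^2_{co}}\lesssim\epsilon^{2K-4}$, so that $\epsilon^{-2}\|u\|^2_{\mathcal{H}^2_{co}}\lesssim\epsilon^{2K-6}$. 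Relatedly, the $\epsilon^{2K-2j}$ rate is not ``fixed precisely'' by the remainder $R_\rho$ alone; the $\nabla\rho_a$-type commutators and the $\partial_t\mathcal{C}_\phi$, $\epsilon^{-2}\|\mathcal{C}_\phi\|^2$ contributions all saturate the same budget and must be tracked with equal care.
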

\begin{proof}
From Proposition \ref{L2}, the results in Proposition \ref{tan} hold for $j=0$. Now, assuming that they  hold for $j\leq 2$, we deal with the case $j=3$. For $|\alpha|=3$, apply $Z^\alpha$ to the equation \eqref{3.1}, and isolate the highest-order terms as follows:
\begin{equation}   \label{tan1}       
\left\{
\begin{array}{lll}
\partial_tZ^\alpha\rho+(u_a+u)\cdot\nabla Z^\alpha\rho+(\rho_a+\rho)\nabla\cdot Z^\alpha u
+Z^\alpha u\cdot\nabla\rho_a+Z^\alpha\rho\nabla\cdot u_a\\
\qquad\qquad\qquad\qquad=\epsilon^KZ^\alpha R_\rho+\mathcal{C}_\rho,\\
(\rho_a+\rho)\big(\partial_t Z^\alpha u+(u_a+u)\cdot\nabla Z^\alpha u+Z^\alpha u\cdot\nabla u_a\big)
+\big(\nabla Z^\alpha\rho-\frac{\nabla\rho_a}{\rho_a}Z^\alpha\rho\big)\\
\qquad\qquad\qquad\qquad=(\rho_a+\rho)\nabla Z^\alpha\phi+\mu\epsilon^2\triangle Z^\alpha u+(\mu+\nu)\epsilon^2\nabla\nabla\cdot Z^\alpha u\\
\qquad\qquad\qquad\qquad\qquad\qquad+Z^\alpha H(\rho, \rho_a, u_a)+\epsilon^KZ^\alpha R_u+\mathcal{C}_u,\\
\epsilon^2\triangle Z^\alpha\phi=Z^\alpha\rho+e^{-\phi_a}Z^\alpha\phi(1+h)+\epsilon^{K+1}Z^\alpha R_\phi+\mathcal{C}_\phi,
\end{array}
\right.
\end{equation}
where $h=h_1:=e^{-\phi}-1$,
and the commutators $\mathcal{C}_\rho,\mathcal{C}_u,\mathcal{C}_\phi$  given by
\begin{align}
\mathcal{C}_\rho=&-[Z^\alpha,(u_a+u)\cdot\nabla]\rho-[Z^\alpha,(\rho_a+\rho)\nabla\cdot]u-[Z^\alpha,\nabla\rho_a\cdot] u-[Z^\alpha,\nabla\cdot u_a]\rho,\nonumber\\
\mathcal{C}_u=&-[Z^\alpha,\rho_a+\rho]\partial_tu-[Z^\alpha,(\rho_a+\rho)(u_a+u)\cdot\nabla]u-[Z^\alpha,(\rho_a+\rho)\nabla u_a\cdot]u\nonumber\\
&-[Z^\alpha,\nabla]\rho+[Z^\alpha,\frac{\nabla\rho_a}{\rho_a}]\rho+[Z^\alpha,(\rho_a+\rho)\nabla]\phi+\mu\epsilon^2[Z^\alpha,\triangle]u\nonumber\\
&+(\mu+\nu)\epsilon^2[Z^\alpha,\nabla\nabla\cdot]u,\nonumber\\
\mathcal{C}_\phi=&-[Z^\alpha,\epsilon^2\triangle]\phi-Z^\alpha\big(e^{-\phi_a}(e^{-\phi}-1)\big)-e^{-\phi_a}Z^\alpha\phi e^{-\phi}.\nonumber
\end{align}
Then, a similar energy estimate as Proposition \ref{L2} gives
\begin{align}  \label{tan2}
\|Z^\alpha\rho\|^2+\|Z^\alpha u\|^2\leq& \int_0^T\int_\Omega(\rho_a+\rho)Z^\alpha u\cdot\nabla Z^\alpha\phi
+\int_0^T\int_\Omega\mu\epsilon^{2}Z^\alpha u\triangle Z^\alpha u\nonumber\\
&+\int_0^T\int_\Omega(\mu+\nu)\epsilon^{2}Z^\alpha u\nabla\nabla\cdot Z^\alpha u
+\int_0^T\int_\Omega Z^\alpha uZ^\alpha H(\rho, \rho_a, u_a)\nonumber\\
&+\int_0^T\int_\Omega\frac{1}{\rho_a+\rho}Z^\alpha\rho\mathcal{C}_\rho
+\int_0^T\int_\Omega Z^\alpha u\mathcal{C}_u\nonumber\\
&+C\int_0^T(\|Z^\alpha\rho\|^2+\|Z^\alpha u\|^2)+C\epsilon^{2K-6}\nonumber\\
:=&\sum_{k=1}^6J_k+C\int_0^T(\|Z^\alpha\rho\|^2+\|Z^\alpha u\|^2)+C\epsilon^{2K-6}.
\end{align}
First let's consider $J_1$. Integrating by parts, we get
\begin{align}
\int_{\mathbb{R}^3_+}(\rho_a+\rho)Z^\alpha u\cdot\nabla Z^\alpha\phi=-\int_{\mathbb{R}^3_+}Z^\alpha\phi\nabla\cdot\big((\rho_a+\rho)Z^\alpha u\big).\nonumber
\end{align}
Using the first equation of \eqref{tan1}, we have
\begin{align} \label{tan3}
J_1=&\int_0^T\int_{\mathbb{R}^3_+}Z^\alpha\phi\partial_tZ^\alpha\rho+\int_0^T\int_{\mathbb{R}^3_+}Z^\alpha\phi\nabla\cdot[(u_a+u)Z^\alpha\rho])\nonumber\\
&-\int_0^T\int_{\mathbb{R}^3_+}Z^\alpha\phi(Z^\alpha\rho\nabla\cdot u +Z^\alpha u\cdot\nabla \rho+\epsilon^KZ^\alpha R_\rho+\mathcal{C}_\rho)\nonumber\\
:=&\int_0^T\sum_{k=1}^3J_{1k}.
\end{align}
By the last equation of \eqref{tan1}, we have
\begin{align}
J_{11}=&\int_{\mathbb{R}^3_+}Z^\alpha\phi(\epsilon^2\partial_t\triangle Z^\alpha\phi
-\partial_t(e^{-\phi_a}Z^\alpha\phi(1+h))+\epsilon^{K+1}\partial_tZ^\alpha R_\phi+\partial_t\mathcal{C}_\phi)\nonumber\\
=&-\epsilon^2\partial_t\int_{\mathbb{R}^3_+}|\nabla Z^\alpha\phi|^2
-\frac{1}{2}\partial_t\int_{\mathbb{R}^3_+}(e^{-\phi_a}|Z^\alpha\phi|^2(1+h))\nonumber\\
&-\frac{1}{2}\int_{\mathbb{R}^3_+}\partial_t(e^{-\phi_a}(1+h))|Z^\alpha\phi|^2
-\int_{\mathbb{R}^3_+}Z^\alpha\phi(\epsilon^{K+1}\partial_tZ^\alpha R_\phi+\partial_t\mathcal{C}_\phi).\nonumber
\end{align}
Clearly, the last two terms can be bounded by
\begin{align}
C(1+\|\partial_t\phi\|_{L^\infty})\|Z^\alpha\phi\|^2+(\epsilon^{2K+2}+\|\partial_t\mathcal{C}_\phi\|^2).\nonumber
\end{align}
Thus
\begin{align}
\int_0^TJ_{11}\leq&-\epsilon^2\int_{\mathbb{R}^3_+}|\nabla Z^\alpha\phi|^2
-\int_{\mathbb{R}^3_+}(e^{-\phi_a}|Z^\alpha\phi|^2(1+h))\nonumber\\
+&C\int_0^T\|Z^\alpha\phi\|^2+\int_0^T\|\partial_t\mathcal{C}_\phi\|^2+C\epsilon^{2K-6}.\nonumber
\end{align}
The second term on the right-hand side of \eqref{tan3} can be estimated as
\begin{align}
J_{12}=&\int_{\mathbb{R}^3_+}Z^\alpha\phi\nabla\cdot[(u_a+u)Z^\alpha\rho]\nonumber\\
=&-\int_{\mathbb{R}^3_+}(u_a+u)\cdot\nabla Z^\alpha\phi(\epsilon^2\triangle Z^\alpha\phi
-e^{-\phi_a}Z^\alpha\phi(1+h)+\epsilon^{K+1}Z^\alpha R_\phi+\mathcal{C}_\phi)\nonumber\\
\leq& \;C\epsilon^2(1+\|\nabla u\|_{L^\infty})\|\nabla Z^\alpha\phi\|^2
+C(1+\|\nabla \phi\|_{L^\infty})\|Z^\alpha\phi\|^2\nonumber\\
&+\frac{1}{\epsilon^2}\|\mathcal{C}_\phi\|^2+C\|\epsilon\nabla Z^\alpha\phi\|^2+C\epsilon^{2K}.\nonumber
\end{align}
For $J_{13}$, we easily have
\begin{align}
J_{13}\leq& \;\|Z^\alpha\phi\| \|Z^\alpha\rho\|\|\nabla\cdot u\|_{L^\infty}+\|Z^\alpha\phi\| \|Z^\alpha u\|\|\nabla \rho\|_{L^\infty}\nonumber\\
&+C \|Z^\alpha\phi\|^2+\|\mathcal{C}_\rho\|^2+\epsilon^{2K}\nonumber\\
\leq& \;C(\|Z^\alpha\rho\|^2+\|Z^\alpha\phi\|^2+\|Z^\alpha u\|^2)+\|\mathcal{C}_\rho\|^2+C\epsilon^{2K}.\nonumber
\end{align}
Collecting all the above discussion about $J_{1k}$, we obtain
\begin{align} \label{tan4}
J_1&+\epsilon^2\int_{\mathbb{R}^3_+}|\nabla Z^\alpha\phi|^2
+\int_{\mathbb{R}^3_+}(e^{-\phi_a}(1+h)|Z^\alpha\phi|^2)\nonumber\\
\leq &\;C\int_0^T(\|Z^\alpha\phi\|^2+\epsilon^2\|\nabla Z^\alpha\phi\|^2+\|Z^\alpha u\|^2+\|Z^\alpha\rho\|^2)\nonumber\\
&+\int_0^T(\|\partial_t\mathcal{C}_\phi\|^2
+\frac{1}{\epsilon^2}\|\mathcal{C}_\phi\|^2+\|\mathcal{C}_\rho\|^2)+C\epsilon^{2K-6}.
\end{align}
Next we turn to estimate the second term on the right-hand side of \eqref{tan2}.
Integrating by parts, we have
\begin{align}
J_2=-\int_0^T\int_{\mathbb{R}^3_+}\mu\epsilon^{2}\nabla Z^\alpha u\nabla Z^\alpha u-\int_0^T\int_{\mathbb{R}^2}\mu\epsilon^{2}\partial_3 Z^\alpha u_yZ^\alpha u_y.\nonumber
\end{align}
Using the boundary condition \eqref{2.8}, we have
\begin{align}
\partial_3 Z^\alpha u_y= Z^\alpha \partial_3u_y+[\partial_3,Z^\alpha]u_y= Z^\alpha u_y+[\partial_3,Z^\alpha]u_y.\nonumber
\end{align}
By the commutator's property and the trace theorem in Section 3, we have
\begin{align}
-\int_{\mathbb{R}^2}\mu\epsilon^2[\partial_3,Z^\alpha]u_yZ^\alpha u_y\leq&\; \mu\epsilon^2|[\partial_3,Z^\alpha]u_y|_{L^2(\mathbb{R}^2)}|Z^\alpha u_y|_{L^2(\mathbb{R}^2)}\nonumber\\
\leq& \;\mu\epsilon^2|[\partial_3,Z^\alpha]u_y|_{L^2(\mathbb{R}^2)}(\|\nabla Z^\alpha u_y\|+\|Z^\alpha u_y\|)\nonumber\\
\leq&\; \frac{\mu}{2}\epsilon^2\|\nabla Z^\alpha u_y\|^2+C\|Z^\alpha u_y\|^2+C\epsilon^2|[\partial_3,Z^\alpha]u_y|^2_{L^2(\mathbb{R}^2)}\nonumber\\
\leq& \;\frac{\mu}{2}\epsilon^2\|\nabla Z^\alpha u_y\|^2+C\|Z^\alpha u_y\|^2+C\epsilon^{2}|Z^2u_y|^2\nonumber\\
\leq& \;\frac{\mu}{2}\epsilon^2\|\nabla Z^\alpha u_y\|^2+C\epsilon^2\|\nabla Z^2u_y\|^2+C\|u\|^2_{\mathcal{H}^2_{co}}.\nonumber
\end{align}
Therefore,
\begin{align} \label{tan5}
J_2&+\frac{\mu}{2}\epsilon^{2}\int_0^T\|\nabla Z^\alpha u\|^2+\mu\epsilon^{2}\int_0^T |Z^\alpha u_y|_{L^2(\mathbb{R}^2)}\leq C\epsilon^{2K-4}.
\end{align}
Similarly, we have for $J_3$,
\begin{align} \label{tan6}
\int_0^T\int_{\mathbb{R}^3_+}(\mu+\nu)\epsilon^2\nabla\nabla\cdot Z^\alpha uZ^\alpha u=-\int_0^T\int_{\mathbb{R}^3_+}(\mu+\nu)\epsilon^2\nabla\cdot Z^\alpha u\nabla\cdot Z^\alpha u.
\end{align}
For $J_4$, by using the assumptions for $j=0,1,2$, we get
\begin{align} \label{tan7}
J_4&=\int_0^T\int_\Omega Z^\alpha uZ^\alpha H(\rho, \rho_a, u_a)\nonumber\\
&\leq C\int_0^T\int_\Omega\epsilon^{2}Z^\alpha u\cdot Z^\alpha \big(\frac{\triangle u_a}{\rho_a}\rho\big)+\epsilon^{2}Z^\alpha u\cdot Z^\alpha \big(\frac{\nabla\nabla\cdot u_a}{\rho_a}\rho\big)\nonumber\\
&\leq C\int_0^T(\|Z^\alpha u\|^2+\|Z^\alpha \rho\|^2)+C\epsilon^{2K-6}.
\end{align}
Thus, collecting \eqref{tan4}-\eqref{tan7}, one has
\begin{align} \label{tan8}
\big(\|Z^\alpha\rho\|^2&+\|Z^\alpha u\|^2+\|Z^\alpha \phi\|^2+\epsilon^2\|\nabla Z^\alpha\phi\|^2\big)+\frac{\mu}{2}\epsilon^2\int_0^T\|\nabla Z^\alpha u\|^2
\nonumber\\
\leq&\; J_5+J_6+C\int_0^T(\|\partial_t\mathcal{C}_\phi\|^2
+\frac{1}{\varepsilon^2}\|\mathcal{C}_\phi\|^2+\|\mathcal{C}_\rho\|^2)\nonumber\\
&+C\int_0^T(\|Z^\alpha\rho\|^2+\|Z^\alpha u\|^2+\|Z^\alpha \phi\|^2+\epsilon^2\|\nabla Z^\alpha\phi\|^2)+C\epsilon^{2K-6}.
\end{align}
Now, it remains to deal with the estimates of the commutators. Recalling the expression of the commutator term $J_5$, we have
\begin{align} \label{tan9}
J_5&=\int_0^T\int_\Omega\frac{1}{\rho_a+\rho}Z^\alpha\rho\mathcal{C}_\rho\leq \int_0^T\|Z^\alpha\rho\|^2+\int_0^T\|\mathcal{C}_\rho\|^2\nonumber\\
&\leq \int_0^T\|Z^\alpha\rho\|^2+\int_0^T\|[Z^\alpha,(u_a+u)\cdot\nabla]\rho\|^2\nonumber\\
&\;\;\;+\int_0^T\|[Z^\alpha,(\rho_a+\rho)\nabla\cdot]u\|^2+\int_0^T\|[Z^\alpha,\nabla\rho_a\cdot] u\|^2+\int_0^T\|[Z^\alpha,\nabla\cdot u_a]\rho\|^2\nonumber\\
&:=\int_0^T\|Z^\alpha\rho\|^2+\sum_{k=1}^4J_{5k}.
\end{align}
First in view of \eqref{tool2}, we have
\begin{align}
J_{51}=&\int_0^T\|[Z^\alpha,(u_a+u)\cdot]\nabla\rho\|^2+\int_0^T\|(u_a+u)[Z^\alpha,\nabla]\rho\|^2\nonumber\\
\leq& \int_0^T\|Z^3(u_a+u)\nabla\rho\|^2+\|Z^2(u_a+u)Z\nabla\rho\|^2+\|Z(u_a+u)Z^2\nabla\rho\|^2\nonumber\\
&+ \int_0^T\|(u_a+u)\sum_{\beta=0}^2\psi^{\beta,3}(x_3)\partial_3Z^\beta\rho\|^2\nonumber\\
\leq& \int_0^T\|\nabla Z^3u_a\|^2_{L^\infty}\|Z\rho\|^2+\|Z^3u\|^2\|\nabla \rho\|_{L^\infty}^2
+\|\nabla Z^2u_a\|^2_{L^\infty}\|Z^2\rho\|^2\nonumber\\
&+\int_0^T\|Z^2u\|^2_{L^6}\|Z\nabla\rho\|_{L^6}^2
+\|\nabla Zu_a\|^2_{L^\infty}\|Z^3\rho\|^2+\|\nabla Zu\|^2_{L^\infty}\|Z^3 \rho\|^2\nonumber\\
&+ \int_0^T\|\nabla u_a\|^2_{L^\infty}\|\rho\|_{\mathcal{H}^3_{co}}^2+\|\nabla u\|_{L^\infty}^2\|\rho\|_{\mathcal{H}^3_{co}}^2\nonumber\\
\leq& \;C\int_0^T(\|Z^3\rho\|^2+\|Z^3u\|^2)+\int_0^T\|Zu\|_{H^3}^2\|Z^3\rho\|^2+C\epsilon^{2K-6}.\nonumber
\end{align}
$J_{52}$ can be estimated by standard commutator estimates as follows
\begin{align}
J_{52}&=\int_0^T\|[Z^\alpha,(\rho_a+\rho)\nabla\cdot]u\|^2\nonumber\\
&=\int_0^T\|[Z^\alpha,(\rho_a+\rho)]\nabla\cdot u\|^2+\int_0^T\|(\rho_a+\rho)[Z^\alpha,\nabla\cdot]u\|^2\nonumber\\
&\leq \int_0^T\|Z^3(\rho_a+\rho)\nabla\cdot u\|^2+\|Z^2(\rho_a+\rho)Z\nabla\cdot u\|^2+\|Z(\rho_a+\rho)Z^2\nabla\cdot u\|^2\nonumber\\
&\;\;+\int_0^T\|(\rho_a+\rho)\sum_{\beta=0}^2\psi^{\beta,3}(x_3)\partial_3Z^\beta u\|^2\nonumber\\
&\leq \int_0^T\|Z^3\rho_a\|_{L^\infty}^2\|\nabla\cdot u\|^2+\|Z^3\rho\|^2\|\nabla\cdot u\|_{L^\infty}^2+\|Z^2\rho_a\|_{L^\infty}^2\|Z\nabla\cdot u\|^2\nonumber\\
&\;\;+\int_0^T\|Z^2\rho\|^2\|Z\nabla\cdot u\|_{L^\infty}^2+\|Z\rho_a\|_{L^\infty}^2\|Z^2\nabla\cdot u\|^2+\|Z\rho\|^2_{L^\infty}\|Z^2\nabla\cdot u\|^2\nonumber\\
&\;\;+\int_0^T\|(\rho_a+\rho)\|^2_{L^\infty}\sum_{\beta=0}^2\|\nabla Z^\beta u\|^2\nonumber\\
&\leq C\int_0^T\|Z^3\rho\|^2+C\epsilon^{2K-6}.\nonumber
\end{align}
For $J_{53}$, noticing that $|\nabla\rho_a|=O(\frac{1}{\epsilon})$, we have
\begin{align}
J_{53}&=\int_0^T\|[Z^\alpha,\nabla\rho_a\cdot] u\|^2\leq\int_0^T\sum_{|\beta|+|\gamma|=3,|\beta|>0}\|Z^\beta \nabla\rho_a Z^\gamma u\|\nonumber\\
&\leq \int_0^T \sum_{|\beta|+|\gamma|=3,|\beta|>0}\|Z^\beta \nabla\rho_a\|^2_{L^\infty} \|Z^\gamma u\|^2\leq \frac{C}{\epsilon^2}\|u\|^2_{\mathcal{H}^2_{co}}\leq C\epsilon^{2K-6}.\nonumber
\end{align}
Finally, one has
\begin{align}
J_{54}&=\int_0^T\|[Z^\alpha,\nabla\cdot u_a]\rho\|^2\leq C\int_0^T\|\rho\|_{\mathcal{H}^2_{co}}^2\leq C\epsilon^{2K-4}.\nonumber
\end{align}
Collecting the above estimates, we get
\begin{align} \label{tan10}
J_{5}
&\leq C\int_0^T(\|Z^3\rho\|^2+\|Z^3u\|^2)+ \int_0^T\|Zu\|_{H^3}^2\|Z^3\rho\|^2+C\epsilon^{2K-6}.
\end{align}
For $J_6$, we get from the expression of $\mathcal{C}_u$ that
\begin{align} \label{tan11}
J_6=&\int_0^T\int_\Omega Z^\alpha u\mathcal{C}_u
\leq \int_0^T\|Z^\alpha u\|^2+\int_0^T\|[Z^\alpha,\rho_a+\rho]\partial_tu\|^2\nonumber\\
&+ \int_0^T\|[Z^\alpha,(\rho_a+\rho)(u_a+u)\cdot\nabla]u\|^2+\int_0^T\|[Z^\alpha,(\rho_a+\rho)\nabla u_a\cdot]u\|^2\nonumber\\
&+\int_0^T\|[Z^\alpha,\frac{\nabla\rho_a}{\rho_a}]\rho\|^2+\int_0^T\|[Z^\alpha,(\rho_a+\rho)\nabla]\phi\|^2
+\int_0^T\int_\Omega[Z^\alpha,\nabla]\rho Z^\alpha u\nonumber\\
&+\int_0^T\int_\Omega\mu\epsilon^2[Z^\alpha,\triangle]uZ^\alpha u+\int_0^T\int_\Omega(\mu+\nu)\epsilon^2[Z^\alpha,\nabla\nabla\cdot]uZ^\alpha u\nonumber\\
:=&\int_0^T\|Z^\alpha u\|^2+\sum_{k=1}^8J_{6k}.
\end{align}
For $J_{61}$, a standard commutator estimate yields
\begin{align}
J_{61}&=\int_0^T\|[Z^\alpha,\rho_a+\rho]\partial_tu\|^2\leq \int_0^T(\|Z^3\rho\|^2+\|Z^3u\|^2)+C\epsilon^{2K-6}.\nonumber
\end{align}
Similarly, we have
\begin{align}
J_{62}+J_{63}+J_{64}
&\leq \int_0^T (\|Z^3\rho\|^2+\|Z^3 u\|^2)+C\epsilon^{2K-6}.\nonumber
\end{align}
and
\begin{align}
J_{65}&=\int_0^T\|[Z^\alpha,(\rho_a+\rho)\nabla]\phi\|^2\leq C\int_0^T \|Z^3\rho\|^2+C\epsilon^{2K-6}.\nonumber
\end{align}
For $J_{66}$, since we do not expect to control $\|\nabla\rho\|_{\mathcal{H}^2_{co}}$, we get
\begin{align}
J_{66}&=\int_0^T\int_\Omega[Z^\alpha,\nabla]\rho Z^\alpha u=\int_0^T\int_\Omega\sum_{|\beta|\leq 2} \psi^{\beta,2}\partial_3Z^\beta\rho Z^\alpha u\nonumber\\
&=\int_0^T\int_\Omega\sum_{|\beta|\leq2} \psi^{\beta,2}\psi(x_3)\partial_3Z^\beta\rho \partial_3Z^{|\alpha|-1} u\nonumber\\
&\leq \int_0^T(\|Z^3\rho\|^2+\|\partial_3Z^2 u\|^2)+C\epsilon^{2K-4}\leq \int_0^T\|Z^3\rho\|^2+C\epsilon^{2K-6}.\nonumber
\end{align}
By integrating by parts and the trace theorem, we have
\begin{align}
J_{67}&=\int_0^T\int_\Omega\mu\epsilon^2[Z^\alpha,\triangle]uZ^\alpha u=\mu\epsilon^2\int_0^T\int_\Omega[Z^\alpha,\partial_{33}]uZ^\alpha u\nonumber\\
&=\mu\epsilon^2\int_0^T\int_\Omega\sum_{0\leq|\beta|\leq2}(\psi^{1,\beta,2} \partial_3Z^\beta u
+\psi^{2,\beta,2} \partial_{33}Z^\beta u)Z^\alpha u\nonumber\\
&\leq \mu\epsilon^2\int_0^T \Big(\sum_{0\leq|\beta|\leq2}\|\partial_3Z^\beta u\|^2+\|Z^\alpha u\|^2\Big)
+\mu\epsilon^2\sum_{0\leq|\beta|\leq2}\int_0^T\int_{\partial\Omega}\psi^{2,\beta,2} \partial_{3}Z^\beta u_yZ^\alpha u_y\nonumber\\
&\;\;+\mu\epsilon^2\sum_{0\leq|\beta|\leq2}\int_0^T\int_{\Omega}\partial_{3}Z^\beta u\partial_3(\psi^{2,\beta,2} Z^\alpha u)\nonumber\\
&\leq C\int_0^T\|Z^3 u\|^2+C\epsilon^{2K-4}+\mu\epsilon^2\sum_{0\leq|\beta|\leq2}\int_0^T\int_{\partial\Omega}\psi^{2,\beta,2}
Z^\beta u_yZ^\alpha u_y\nonumber\\
&\;\;-\mu\epsilon^2\sum_{0\leq|\beta|\leq2}\int_0^T\int_{\partial\Omega}\psi^{2,\beta,2}
[Z^\beta,\partial_3] u_yZ^\alpha u_y+\mu\epsilon^2\sum_{0\leq|\beta|\leq2}\int_0^T\int_{\Omega}\partial_3\psi^{2,\beta,2} \partial_{3}Z^\beta uZ^\alpha u\nonumber\\
&\;\;+\mu\epsilon^2\sum_{0\leq|\beta|\leq2}\int_0^T\int_{\Omega}\psi^{2,\beta,2} \partial_{3}Z^\beta u\partial_3Z^\alpha u\nonumber\\
&\leq C\int_0^T \|Z^3 u\|^2+\frac{\mu}{4}\epsilon^2\int_0^T\|\nabla Z^3 u\|^2+C\epsilon^{2K-6}.\nonumber
\end{align}
The estimate for $J_{68}$ is similar, so we omit it for simplicity.
Collecting the above estimates, we get the estimate for $J_6$ that
\begin{align} \label{tan12}
J_6&\leq \int_0^T(\|Z^3\rho\|^2+\|Z^3u\|^2)+ \frac{\mu}{4}\epsilon^2\int_0^T\|\nabla Z^3 u\|^2+C\epsilon^{2K-6}.
\end{align}
Notice that we have given the estimate of $\|\mathcal{C}_\rho\|^2$ in the estimate of $J_5$. Therefore it remains to estimate
$\int_0^T\|\partial_t\mathcal{C}_\phi\|^2+\frac{1}{\epsilon^2}\|\mathcal{C}_\phi\|^2.$
From the expression of $\mathcal{C}_\phi$, one has
\begin{align}
\int_0^T\|\mathcal{C}_\phi\|^2\leq&\int_0^T\|[Z^\alpha,\epsilon^2\triangle]\phi\|^2
+\int_0^T\|Z^\alpha\big(e^{-\phi_a}(e^{-\phi}-1)\big)+e^{-\phi_a}Z^\alpha\phi e^{-\phi}\|^2\nonumber\\
\leq&\;\epsilon^4\int_0^T\sum^{2}_{|\beta|=0}\|(\psi^{1,\beta,2}Z^\beta\partial_3\phi+\psi^{2,\beta,2}Z^\beta\partial_{33}\phi)\|^2+C\epsilon^{2K-4}\nonumber\\
\leq&\;C\epsilon^4\int_0^T\sum^{2}_{|\beta|=0}\|Z^\beta\partial_{33}\phi\|^2+C\epsilon^{2K-4}.\nonumber
\end{align}
Using the Poisson equation, we have
\begin{align}
\int_0^T\|\mathcal{C}_\phi\|^2&\leq C\epsilon^4\int_0^T\sum^{2}_{|\beta|=0}\|Z^\beta\triangle_{y}\phi\|^2
+C\epsilon^{2K-4}\nonumber\\
&\leq C\epsilon^4\int_0^T\|\nabla Z^3\phi\|^2+C\epsilon^{2K-4}.\nonumber
\end{align}
Now, let's consider $\partial_t\mathcal{C}_\phi$. In fact, we have
\begin{align}
\int_0^T\|\partial_t\mathcal{C}_\phi\|^2&\leq\int_0^T\|\partial_t[Z^\alpha,\epsilon^2\triangle]\phi\|^2\nonumber\\
&\;\;+\int_0^T\|\partial_t\Big(Z^\alpha\big(e^{-\phi_a}(e^{-\phi}-1)\big)+e^{-\phi_a}Z^\alpha\phi e^{-\phi}\Big)\|^2\nonumber\\
&\leq\epsilon^2\int_0^T\sum^{3}_{|\beta|=0}\|(\psi^{1,\beta,2}Z^\beta\partial_3\phi+\psi^{2,\beta,2}Z^\beta\partial_{33}\phi)\|^2\nonumber\\
&\;\;+\int_0^T\|Z^3\phi\|^2+C\epsilon^{2K-4}\nonumber\\
&\leq C\int_0^T(\epsilon^2\|\nabla Z^3\phi\|^2+\|Z^3\phi\|^2)+\epsilon^2\int_0^T\|\partial_{33}\phi\|_{\mathcal{H}^3_{co}}^2\nonumber\\
&\;\;+C\epsilon^{2K-6}.\nonumber
\end{align}
By a standard elliptic estimate, we have
\begin{align}
\epsilon^2\|\nabla^2\phi\|_{\mathcal{H}^3_{co}}\leq \|\rho\|_{\mathcal{H}^3_{co}}+\|\phi\|_{\mathcal{H}^3_{co}}+\|\mathcal{C}_\phi\|+C\epsilon^{K+1}.\nonumber
\end{align}
Thus it holds that
\begin{align}
\int_0^T\|\partial_t\mathcal{C}_\phi\|^2
\leq&\;C\int_0^T(\epsilon^2\|\nabla Z^3\phi\|^2+\|Z^3\phi\|^2+\|Z^3\rho\|^2)+\epsilon^2\int_0^T\|\mathcal{C}_\phi\|+C\epsilon^{2K-6}\nonumber\\
\leq&\;C\int_0^T(\epsilon^2\|\nabla Z^3\phi\|^2+\|Z^3\phi\|^2+\|Z^3\rho\|^2)+C\epsilon^{2K-6}.\nonumber
\end{align}
Finally, combining all the commutators estimates with \eqref{tan8}, we obtain
\begin{align}
\|Z^3\big(\rho,u,\phi,\epsilon\nabla\phi\big)\|^2
&+\frac{\mu}{4}\epsilon^{2}\int_0^T\|\nabla Z^3 u\|^2\nonumber\\
&\leq C\int_0^T\|Z^3\big(\rho,u,\phi,\epsilon\nabla\phi\big)\|^2+\int_0^T\|Zu\|_{H^3}^2\|Z^3\rho\|^2+C\epsilon^{2K-6}.\nonumber
\end{align}
By Gronwall's inequality, we complete the proof of Proposition \ref{tan}.
\end{proof}

\subsection{Normal estimates}
In this section, we shall give the estimates of the normal derivatives of the solution. For notations simplicity, we set $R:=\partial_3\rho$. First, we have the following proposition.
\begin{prop} \label{Nphi1}
Under the assumptions of Proposition \ref{L2}, it holds that
\begin{align}
\|\partial_3^2\phi\|^2+\epsilon^2\|Z\partial_3^2\phi\|^2+\epsilon^4\|Z^2\partial_3^2\phi\|^2\leq C\epsilon^{2K-4}.\nonumber
\end{align}
\end{prop}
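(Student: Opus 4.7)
The strategy is to treat the Poisson equation \eqref{3.1}$_3$ as an algebraic identity for the pure normal derivative $\partial_3^2\phi$ and then pass tangential (conormal) derivatives $Z^\alpha$, $|\alpha|\le 2$, through it. No time integration or Gronwall argument is needed: the proposition should be a purely static elliptic consequence of the already-established Propositions \ref{L2} and \ref{tan}.

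Concretely, isolating $\partial_3^2\phi$ in \eqref{3.1}$_3$ gives
$$\epsilon^2\partial_3^2\phi=\rho-e^{-\phi_a}(e^{-\phi}-1)+\epsilon^{K+1}R_\phi-\epsilon^2\triangle_y\phi.$$
Applying $Z^\alpha$ for $|\alpha|=j\in\{0,1,2\}$ and taking the $L^2(\Omega)$-norm, I have to control four contributions: (i) $\|Z^\alpha\rho\|\le C\epsilon^{K-j}$ from Propositions \ref{L2} and \ref{tan}; (ii) the nonlinear factor $\|Z^\alpha[e^{-\phi_a}(e^{-\phi}-1)]\|$, which by the Leibniz rule expands into products of $Z^{\beta_1}\phi_a$ (uniformly bounded in $L^\infty$, since the $\psi(x_3)$-weight in $Z_3$ absorbs the $\epsilon^{-1}$ blow-up of the normal derivatives of the boundary-layer profile $\Phi^i$) and $Z^{\beta_2}\phi$ (controlled by $\|\phi\|_{\mathcal{H}^j_{co}}\le C\epsilon^{K-j}$), with quadratic remainders tamed by the Sobolev bound $\|\phi\|_{L^\infty}\le C\|\phi\|_{\mathcal{H}^3}\le C\epsilon$ coming from the a priori assumption \eqref{apriori}; (iii) $\epsilon^{K+1}\|Z^\alpha R_\phi\|$, harmless by \eqref{2.5}; and (iv) the tangential Laplacian term $\epsilon^2\|Z^\alpha\triangle_y\phi\|$.

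The main, and essentially only, obstacle is (iv), because the naive bound $\|Z^\alpha\triangle_y\phi\|\lesssim\|\phi\|_{\mathcal{H}^{j+2}_{co}}$ would require conormal control of $\phi$ beyond the order-$3$ ceiling provided by Proposition \ref{tan}. The key observation is that $\triangle_y\phi=Z_1(\partial_1\phi)+Z_2(\partial_2\phi)$ is only \emph{one} conormal derivative of $\nabla_y\phi$, since $\partial_1=Z_1$ and $\partial_2=Z_2$ are themselves tangential fields. Hence
$$\epsilon^2\|Z^\alpha\triangle_y\phi\|\le\epsilon^2\|\nabla\phi\|_{\mathcal{H}^{j+1}_{co}}=\epsilon\,\|\epsilon\nabla\phi\|_{\mathcal{H}^{j+1}_{co}}\le C\epsilon^{K-j},$$
valid because $j+1\le 3$ and Proposition \ref{tan} controls exactly the weighted quantity $\epsilon\nabla\phi$; the embedded weight $\epsilon$ is precisely what compensates for the one-derivative mismatch created by $\triangle_y$. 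Combining (i)--(iv) yields $\epsilon^2\|Z^\alpha\partial_3^2\phi\|\le C\epsilon^{K-j}$ for every $|\alpha|=j\le 2$; squaring, multiplying by $\epsilon^{2j-4}$, and summing over $j=0,1,2$ produces the three pieces $\|\partial_3^2\phi\|^2+\epsilon^2\|Z\partial_3^2\phi\|^2+\epsilon^4\|Z^2\partial_3^2\phi\|^2\le C\epsilon^{2K-4}$.
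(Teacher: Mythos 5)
Your proof is correct and follows essentially the same route as the paper: isolate $\partial_3^2\phi$ in the Poisson equation, apply $Z^\alpha$ for $|\alpha|\le 2$, and read off the bound from the tangential estimates of Propositions~\ref{L2} and~\ref{tan} without any time integration. The only difference is that the paper stops at $|\alpha|\le 1$ and declares the $|\alpha|=2$ case ``similar,'' whereas you supply the one nontrivial detail hidden there: the naive bound $\|Z^2\triangle_y\phi\|\lesssim\|Z^4\phi\|$ exceeds the order-$3$ conormal control available, and the correct way through is to write $\triangle_y\phi$ as a single conormal derivative of $\nabla\phi$ and trade the resulting mismatch against the $\epsilon$-weight in the quantity $\epsilon\nabla\phi$ that Proposition~\ref{tan} actually controls. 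That observation is implicit in the paper and explicit in your write-up; the rest of the argument (the $L^\infty$ bound from the a priori assumption, the $\psi(x_3)$-weight absorbing the $\epsilon^{-1}$ from normal derivatives of the boundary-layer profile in $\phi_a$) is the same.
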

\begin{proof}
Using the Poisson equation, we have
\begin{align} \label{nor1}
\epsilon^2\partial_3^2\phi=-\epsilon^2\triangle_y\phi+\rho-e^{-\phi_a}(e^{-\phi}-1)+\epsilon^{K+1}R_\phi.
\end{align}
Taking $L^2$ norm to \eqref{nor1} yields
\begin{align}
\epsilon^4\|\partial_3^2\phi\|^2\leq \epsilon^4\|\triangle_y\phi\|^2+\|\rho\|^2+\|e^{-\phi_a}(e^{-\phi}-1)\|^2+\epsilon^{2K+2}\leq C\epsilon^{2K},\nonumber
\end{align}
by using the tangential estimates in Proposition \ref{L2} and Proposition \ref{tan}.
Next, applying $Z$ to \eqref{nor1} gives
\begin{align}
\epsilon^2Z\partial_3^2\phi=-\epsilon^2Z\triangle_y\phi+Z\rho-Z[e^{-\phi_a}(e^{-\phi}-1)]+\epsilon^{K+1}ZR_\phi.\nonumber
\end{align}
Taking $L^2$ norm and using the tangential estimates we have established yields
\begin{align}
\epsilon^4\|Z\partial_3^2\phi\|^2&\leq \epsilon^4\|Z\triangle_y\phi\|+\|Z\rho\|+\|Z[e^{-\phi_a}(e^{-\phi}-1)]\|^2+\|\epsilon^{K+1}ZR_\phi\|^2\nonumber\\
&\leq C\epsilon^{2K-2}.\nonumber
\end{align}
The estimates of $Z^2\partial_3^2\phi$ can be given in a similar fashion, so we omit the details for simplicity. Therefore, we complete the proof of Proposition \ref{Nphi1}.
\end{proof}
Next, we prove the estimates of $R$.
\begin{prop} \label{NR}
Under the assumptions of Proposition \ref{L2}, it holds that
\begin{align}
\epsilon^2&\|R\|^2+\int_0^T\|R\|^2\leq C\epsilon^{2K-2}.\nonumber
\end{align}
\end{prop}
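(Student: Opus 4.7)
The plan is to derive a transport-damped equation for $R=\partial_3\rho$ by combining the $\partial_3$-differentiated continuity equation with the third (normal) component of the momentum equation, exploiting a cancellation to eliminate the highly singular term $(\rho_a+\rho)\partial_3^2 u_3$, which is not controlled by the tangential estimates of Propositions \ref{L2} and \ref{tan}.

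First I would apply $\partial_3$ to $\eqref{3.1}_1$ to obtain
\[
\partial_t R + (u_a+u)\cdot\nabla R + (\rho_a+\rho)\partial_3\nabla\cdot u + \mathcal{L} = \epsilon^K\partial_3 R_\rho,
\]
where $\mathcal{L}$ collects lower-order terms involving $R$ itself, tangential derivatives of $\rho$ and $u$, and profile factors $\rho_a,u_a$ and their derivatives. The dangerous term is $(\rho_a+\rho)\partial_3\nabla\cdot u=(\rho_a+\rho)\partial_3^2u_3+(\rho_a+\rho)\partial_3(\partial_1 u_1+\partial_2 u_2)$; the second summand, being $\partial_i(\partial_3 u_i)$ with $i=1,2$ tangential, is controlled by the conormal-viscous dissipation from Proposition \ref{tan}. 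For the first summand I use the third component of $\eqref{3.1}_2$. Since $\mu\triangle u_3+(\mu+\nu)\partial_3\nabla\cdot u=(2\mu+\nu)\partial_3^2 u_3+\mu\triangle_y u_3+(\mu+\nu)\partial_3(\partial_1 u_1+\partial_2 u_2)$, rearranging gives
\[
(2\mu+\nu)\epsilon^2\partial_3^2 u_3=R+\mathcal{M},
\]
where $\mathcal{M}$ groups the material derivative $(\rho_a+\rho)(\partial_t u_3+(u_a+u)\cdot\nabla u_3+u\cdot\nabla u_{a,3})$, the potential term $-(\rho_a+\rho)\partial_3\phi$, the term $-\tfrac{\partial_3\rho_a}{\rho_a}\rho$, the tangential viscous contributions $-\mu\epsilon^2\triangle_y u_3$ and $-(\mu+\nu)\epsilon^2\partial_3(\partial_1u_1+\partial_2u_2)$, and the remainders $-H_3-\epsilon^KR_{u,3}$.

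Next I multiply the $R$-equation by $\epsilon^2$ and substitute $\epsilon^2(\rho_a+\rho)\partial_3^2u_3=\tfrac{\rho_a+\rho}{2\mu+\nu}(R+\mathcal{M})$, obtaining the transport-damped equation
\[
\epsilon^2\partial_t R+\epsilon^2(u_a+u)\cdot\nabla R+\frac{\rho_a+\rho}{2\mu+\nu}R=\mathcal{S},
\]
where $\mathcal{S}$ is the sum of $-\tfrac{\rho_a+\rho}{2\mu+\nu}\mathcal{M}$, the rewritten mixed term $-\epsilon^2(\rho_a+\rho)\partial_3(\partial_1 u_1+\partial_2 u_2)$, the lower-order term $-\epsilon^2\mathcal{L}$, and $\epsilon^{K+2}\partial_3 R_\rho$. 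The key structural gain is that the coefficient $\tfrac{\rho_a+\rho}{2\mu+\nu}$ of $R$ on the LHS has a positive uniform lower bound $c_0>0$, producing a damping term. Testing this equation against $R$, integrating over $\Omega$, and integrating by parts the transport term yields
\[
\frac{\epsilon^2}{2}\frac{d}{dt}\|R\|^2+c_0\|R\|^2\leq C\epsilon^2\|R\|^2+\|R\|\,\|\mathcal{S}\|.
\]
Bounding $\|R\|\,\|\mathcal{S}\|\leq\tfrac{c_0}{2}\|R\|^2+C\|\mathcal{S}\|^2$, integrating in time, absorbing the $\epsilon^2$-term via Gronwall, and using the negligible initial datum $\epsilon^2\|R(0)\|^2=O(\epsilon^{2K+4})$ reduces the estimate to bounding $\int_0^T\|\mathcal{S}\|^2\leq C\epsilon^{2K-2}$. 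Every piece of $\mathcal{S}$ is either an $L^\infty_tL^2_x$ quantity of size $O(\epsilon^{K-1})$ controlled by Proposition \ref{tan} (for $\partial_t u_3$, $\nabla\phi$, etc.), or an $L^2_tL^2_x$ quantity of size $O(\epsilon^{K-1})$ controlled by the conormal viscous dissipation; in particular $\epsilon^2(\rho_a+\rho)\partial_3(\partial_1u_1+\partial_2u_2)$ is bounded in $L^2_{t,x}$ by $C\epsilon\,\|\epsilon\nabla Zu\|_{L^2_tL^2_x}=O(\epsilon^K)$.

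The main obstacle is the careful bookkeeping of $\mathcal{S}$: every term must be matched to a norm already controlled by Propositions \ref{L2}--\ref{tan} and the a priori assumption \eqref{apriori}. The borderline cases are the crossing terms inside $\epsilon^2\mathcal{L}$, in particular $\epsilon^2\partial_3 u_3\cdot R$, which couples the unknown to itself and must be absorbed by the damping using smallness from \eqref{apriori}, and the mixed tangential-normal derivatives $\epsilon^2\partial_3 Zu$, whose control comes precisely from the factor $\epsilon^2$ gained when multiplying the $R$-equation through. It is precisely this accounting that cannot be done without the $\epsilon^2$-multiplication step, which is what makes the approach different from the hyperbolic treatment of \cite{G-H-R-2013, G-H-R-2016} and forces the appearance of the characteristic combination $\epsilon^2\|R\|^2+\int_0^T\|R\|^2$ in the final bound.
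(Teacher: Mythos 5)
Your proposal is correct and is essentially the paper's own argument: the paper forms exactly the combination $\eqref{nor3}\times(2\mu+\nu)\epsilon^2+\eqref{nor5}\times(\rho_a+\rho)$ to cancel $(\rho_a+\rho)\partial_{33}u_3$ and obtain the damped transport equation \eqref{nor6} for $R$ with damping coefficient $\rho_a+\rho$, then tests against $R$ and bounds the sources $(2\mu+\nu)\epsilon^2M_1+M_2+M_3$ in $L^2_tL^2_x$ by $C\epsilon^{2K-2}$ using the tangential estimates. Your version, multiplying the $\partial_3$-differentiated continuity equation by $\epsilon^2$ and substituting $\epsilon^2(\rho_a+\rho)\partial_3^2u_3=\tfrac{\rho_a+\rho}{2\mu+\nu}(R+\mathcal{M})$, is the same manipulation written divided through by $2\mu+\nu$, and the subsequent energy estimate and source bookkeeping coincide with the paper's.
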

\begin{proof}
Rewrite the first equation of \eqref{3.1} into
\begin{equation}   \label{nor2}       
\partial_t\rho+(u_a+u)\cdot\nabla\rho+(\rho_a+\rho)\nabla_y\cdot u_y+(\rho_a+\rho)\partial_3 u_3+u\cdot\nabla\rho_a+\rho\nabla\cdot u_a=\epsilon^KR_\rho.
\end{equation}
Taking derivative with respect to $x_3$ to the above equation, we get
\begin{align}   \label{nor3}       
\partial_t\partial_3\rho+(u_a+u)\cdot\nabla\partial_3\rho&+\partial_3(u_a+u)_3\partial_3\rho
\nonumber\\
&+\partial_3\rho\nabla\cdot (u_a+ u)+(\rho_a+\rho)\partial_{33} u_3
=M_1,
\end{align}
where $M_1$ is given by
\begin{align}  \label{nor4}
M_1=&-\partial_3(u_a+u)_y\cdot\nabla_y\rho-(\rho_a+\rho)\nabla_y\cdot \partial_3u_y-\partial_3(u\cdot\nabla\rho_a)\nonumber\\
&-\partial_3\rho_a\nabla\cdot u-\rho\nabla\cdot \partial_3u_a+\epsilon^K\partial_3R_\rho.
\end{align}
Recalling that the equation satisfied by $u_3$ in \eqref{3.1} is
\begin{align}   \label{nor5}       
(\rho_a+\rho)\big(\partial_t u_3&+(u_a+u)\cdot\nabla u_3+u\cdot\nabla u_a^3\big)+\big(\partial_3 \rho-\frac{\partial_3\rho_a}{\rho_a}\rho\big)\nonumber\\
=&(\rho_a+\rho)\partial_3\phi+\mu\epsilon^2\triangle_y u_3+(2\mu+\nu)\epsilon^2\partial_{33} u_3\nonumber\\
&+(\mu+\nu)\epsilon^2\partial_3\nabla_y\cdot u_y+H(\rho, \rho_a, u_a)_3+\epsilon^KR_u.
\end{align}
Taking $\eqref{nor3}\times (2\mu+\nu)\epsilon^2+\eqref{nor5}\times (\rho_a+\rho)$ gives
\begin{align}   \label{nor6}
(2\mu+\nu)\epsilon^2\Big(\partial_tR&+(u_a+u)\cdot\nabla R+R\partial_3(u_a+u)_3
+R\nabla\cdot (u_a+u)\Big)\nonumber\\
&+(\rho_a+\rho)R=(2\mu+\nu)\epsilon^2M_1+M_2+M_3,
\end{align}
where
\begin{align}
M_2=&(\rho_a+\rho)^2\big(\partial_t u_3+(u_a+u)\cdot\nabla u_3+u\cdot\nabla u_a^3\big)-\frac{\partial_3\rho_a}{\rho_a}(\rho_a+\rho)\rho,\nonumber\\
M_3=&(\rho_a+\rho)^2\partial_3\phi+\mu\epsilon^2(\rho_a+\rho)\triangle_y u_3+(\mu+\nu)\epsilon^2(\rho_a+\rho)\partial_3\nabla_y\cdot u_y\nonumber\\
&+(\rho_a+\rho)(H(\rho, \rho_a, u_a)_3+\epsilon^KR_u).\nonumber
\end{align}
Multiplying \eqref{nor6} by $R$ and integrating over $[0,T]\times \Omega$, we obtain
\begin{align}
(2\mu+\nu)\epsilon^2&\Big(\frac{1}{2}\int_\Omega|R|^2-\frac{1}{2}\int_\Omega|R(0)|^2
+\frac{1}{2}\int_0^T\int_\Omega\nabla\cdot(u_a+u)|R|^2\Big)\nonumber\\
&+(2\mu+\nu)\epsilon^2\int_0^T\int_\Omega\partial_3(u_a+u)_3
|R|^2+\int_0^T\int_\Omega(\rho_a+\rho)|R|^2\nonumber\\
&=\int_0^T\int_\Omega\Big((2\mu+\nu)\epsilon^2M_1+M_2+M_3\Big)R.\nonumber
\end{align}
This gives
\begin{align}   \label{nor7}
\epsilon^2\|R\|^2&+\int_0^T\|R\|^2\leq\int_0^T\|(2\mu+\nu)\epsilon^2M_1+M_2+M_3\|^2+C\epsilon^{2K-2}.
\end{align}
It remains to estimate the first term on the right-hand side of \eqref{nor7}. In fact, from the tangential estimates we have established in Section 4 and Section 5, we have
\begin{align} \label{nor8}
\int_0^T\|\big((2\mu+\nu)\epsilon^2M_1+M_2+M_3\big)\|^2\leq C\epsilon^{2K-2}.
\end{align}
Combining the estimates \eqref{nor7} and \eqref{nor8}, we complete the proof of Proposition \ref{NR}.
\end{proof}

\begin{prop} \label{NZR}
Under the assumptions of Proposition \ref{L2}, it holds that
\begin{align}
\epsilon^2&\|ZR\|^2+\int_0^T\|ZR\|^2\leq C\epsilon^{2K-4}.\nonumber
\end{align}
\end{prop}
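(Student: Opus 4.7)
My strategy is to replay the argument of Proposition \ref{NR} at one higher conormal level. Applying a single tangential derivative $Z$ to the transport--damping equation \eqref{nor6} and commuting $Z$ past the variable coefficients and the derivatives $\nabla,\partial_3$, I obtain
\begin{align*}
(2\mu+\nu)\epsilon^2\Big(\partial_t ZR + (u_a+u)&\cdot\nabla ZR + ZR\,\partial_3(u_a+u)_3 + ZR\,\nabla\cdot(u_a+u)\Big) \\
& + (\rho_a+\rho) ZR = (2\mu+\nu)\epsilon^2 ZM_1 + ZM_2 + ZM_3 + \mathcal{C},
\end{align*}
where $\mathcal{C}$ collects the commutator remainders $[Z,(u_a+u)\cdot\nabla]R$, $[Z,\partial_3(u_a+u)_3]R$, $[Z,\nabla\cdot(u_a+u)]R$ and $Z(\rho_a+\rho)\cdot R$. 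Multiplying by $ZR$, integrating over $[0,T]\times\Omega$ and integrating by parts in the convective term (whose boundary contribution vanishes because $(u_a+u)_3|_{x_3=0}=0$), while exploiting the positive lower bound of $\rho_a+\rho$, I arrive at
\[\epsilon^2\|ZR(T)\|^2 + \int_0^T\|ZR\|^2 \leq C\int_0^T\!\big(\epsilon^4\|ZM_1\|^2 + \|ZM_2\|^2 + \|ZM_3\|^2 + \|\mathcal{C}\|^2\big) + C\epsilon^{2K-4},\]
where the last term absorbs the initial data (of order $\epsilon^{2K+4}$) and the $\epsilon^K$-remainders $R_\rho,R_u,R_\phi$.

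Each summand in $ZM_1,ZM_2,ZM_3$ is a product of at most two factors chosen among $Z$-derivatives of $(\rho,u,\phi,\epsilon\nabla\phi)$ up to order two, exponentially decaying profiles in $\rho_a,u_a$, and at most one normal derivative of $u$ or $\phi$. Using the tangential bounds $\|Z^j(\rho,u,\phi,\epsilon\nabla\phi)\|^2 \leq C\epsilon^{2K-2j}$ and $\epsilon^2\int_0^T\|\nabla Z^ju\|^2 \leq C\epsilon^{2K-2j}$ from Proposition \ref{tan}, together with Proposition \ref{Nphi1} for the $\partial_3\phi$ factor entering $M_3$ and the identity $\|\partial_3\phi\|\leq\|\nabla\phi\|\leq C\epsilon^{K-1}$, each source integral is bounded by $C\epsilon^{2K-4}$. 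The $\epsilon^4$ prefactor on $M_1$ absorbs the most delicate contribution $(\rho_a+\rho)\nabla_y\partial_3 u_y$, for which only $\int_0^T\|\nabla Z^2u\|^2 \leq C\epsilon^{2K-6}$ is available.

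The main technical obstacle is the commutator $\mathcal{C}$ when $Z = Z_3$: since $[Z_3,\partial_3] = -\psi'(x_3)\partial_3$, there arises the apparently singular term $-(u_a+u)_3\,\psi'(x_3)\,\partial_3 R$, featuring the uncontrolled second normal derivative $\partial_{33}\rho$. I dispose of it through the Hardy-type factorisation
\[(u_a+u)_3\,\partial_3 R = \frac{(u_a+u)_3(1+x_3)}{x_3}\cdot Z_3 R,\]
obtained from $Z_3 = \psi(x_3)\partial_3$ with $\psi(x_3) = x_3/(1+x_3)$. The coefficient $\frac{(u_a+u)_3(1+x_3)}{x_3}$ belongs to $L^\infty$: near $x_3=0$ by Taylor's theorem because $(u_a+u)_3|_{x_3=0}=0$, and for large $x_3$ by uniform bounds on $(u_a+u)_3$ combined with the decay of $\psi'(x_3)=(1+x_3)^{-2}$. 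Consequently, its pairing against $Z_3R$ is controlled by $C\epsilon^2\int_0^T\|Z_3R\|^2$, which is absorbed into the coercive left-hand side. The remaining pieces of $\mathcal{C}$, including all cases $Z\in\{Z_0,Z_1,Z_2\}$ for which $[Z,\partial_3]=0$, reduce to products of tangential derivatives of coefficients with $R$ or $\nabla_y R$, and are handled by Cauchy--Schwarz using the bound $\|R\|^2\leq C\epsilon^{2K-4}$ from Proposition \ref{NR}. A final application of Gronwall's inequality closes the estimate and delivers $\epsilon^2\|ZR(T)\|^2 + \int_0^T\|ZR\|^2 \leq C\epsilon^{2K-4}$, as required.
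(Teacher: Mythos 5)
Your overall approach is the paper's: apply $Z$ to \eqref{nor6}, isolate the commutator $\mathcal{C}$, run the weighted energy estimate, bound the sources $ZM_i$ from the earlier tangential and $\partial_3^2\phi$ propositions, and absorb the small residual $C\epsilon^2\int_0^T\|ZR\|^2$ into the damping term $\int_0^T\|ZR\|^2$ on the left. The Hardy rewriting $\partial_3R=\psi(x_3)^{-1}Z_3R$, which trades the uncontrolled normal derivative for a conormal one using $(u_a+u)_3|_{x_3=0}=0$, is precisely the device in the paper's $I_1$ estimate; the paper applies it to the $u_a^3$ factor and uses a Sobolev pairing $\|u_3\|_{L^3}\|\partial_3R\|_{L^6}$ for the $u_3$ perturbation, whereas you apply Hardy to both, which is a minor and legitimate variant since the a priori assumption makes $\|\partial_3 u_3\|_{L^\infty}$ bounded.

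There is, however, a genuine gap in your treatment of the ``remaining pieces'' of $\mathcal{C}$. You claim they reduce to products of tangential derivatives of coefficients with $R$ or $\nabla_y R$. That is not the case: expanding $[Z,(u_a+u)\cdot\nabla]R=Z(u_a+u)\cdot\nabla R+(u_a+u)\cdot[Z,\nabla]R$, the first summand produces $Z(u_a+u)_3\,\partial_3R$ for every tangential $Z$, not just $Z_3$. This again involves $\partial_{33}\rho$, which at this stage is not yet controlled (that is Proposition \ref{N3R}, proved much later). The fix is exactly the Hardy rewriting you already invoked for the $[Z_3,\partial_3]$ piece: $Z(u_a+u)_3$ also vanishes on $\{x_3=0\}$ (for $Z_3=\psi\partial_3$ because $\psi(0)=0$), so the coefficient $Z(u_a+u)_3/\psi(x_3)$ is bounded and the resulting pairing against $Z_3R$ is again absorbable. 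But as written your claim for the remaining pieces is false, and without this step the bound on $\mathcal{C}$ does not follow; the paper handles this term explicitly, writing $\|\psi^{-1}Zu_a\cdot ZR\|$ and $\|Zu\|_{L^3}\|\nabla R\|_{L^6}$ inside its $I_1$ estimate. A smaller imprecision: your displayed factorisation drops the $\psi'(x_3)$ factor from $[Z_3,\partial_3]=-\psi'\partial_3$, so the coefficient whose boundedness you actually need is $(u_a+u)_3/[x_3(1+x_3)]$ rather than $(u_a+u)_3(1+x_3)/x_3$; your subsequent boundedness discussion, which invokes the decay of $\psi'$, in effect addresses the correct one, but the displayed identity is inconsistent with it.
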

\begin{proof}
Applying $Z$ to the equation \eqref{nor6} yields
\begin{align}   \label{nor9}
(2\mu+\nu)\epsilon^2\Big(&\partial_tZ R+(u_a+u)\cdot\nabla Z R+ZR\partial_3(u_a+u)_3+Z R\nabla\cdot (u+ u_a)
\Big)\nonumber\\
&+(\rho_a+\rho)Z R=(2\mu+\nu)\epsilon^2Z M_1+Z M_2+Z M_3+\mathcal{C},
\end{align}
where the commutator $\mathcal{C}$ is
\begin{align}   \label{nor10}
\mathcal{C}&=-(2\mu+\nu)\epsilon^2\Big([Z,(u_a+u)\cdot\nabla]R+[Z,\partial_3(u_a+u)_3]R\Big)\nonumber\\
&-(2\mu+\nu)\epsilon^2[Z,\nabla\cdot (u+ u_a)]R-[Z,(\rho_a+\rho)] R.
\end{align}
Similar to the proof of Proposition \ref{NR}, a direct energy method yields
\begin{align}   \label{nor11}
\epsilon^2\|Z R\|^2&+\int_0^T\|Z R\|^2\nonumber\\
&\leq C\int_0^T\|\epsilon^2Z M_1+Z M_2+Z M_3\|^2+\int_0^T\|\mathcal{C}\|^2+\epsilon^{2K}.
\end{align}
Using the tangential estimates in Proposition \ref{L2} and \ref{tan}, we can estimate the source term as
\begin{align}
\int_0^T\|\epsilon^2Z M_1+Z M_2+Z M_3\|^2\leq \epsilon^{2K-4}.\nonumber
\end{align}
Now we give the control of the second term on the right-side of \eqref{nor11}. From the expression of $\mathcal{C}$, we have
\begin{align}
\int_0^T\|\mathcal{C}\|^2=&\; \epsilon^2\int_0^T\|[Z,(u_a+u)\cdot\nabla]R\|^2+\epsilon^2\int_0^T\|[Z,\partial_3(u_a+u)_3]R\|^2\nonumber\\
&+\epsilon^2\int_0^T\|[Z,\nabla\cdot (u+ u_a)]R\|^2+\int_0^T\|[Z,(\rho_a+\rho)] R\|^2\nonumber\\
:=&\sum_{k=1}^4I_k.\nonumber
\end{align}
We need to give the estimates of each term. For $I_1$, we have
\begin{align}
I_1&= \epsilon^2\int_0^T\|[Z,(u_a+u)\cdot]\nabla R\|^2+\epsilon^2\int_0^T\|(u_a+u)\cdot[Z,\nabla]R\|^2\nonumber\\
&\leq \epsilon^2\int_0^T\|Z(u_a+u)\cdot\nabla R\|^2+\epsilon^2\int_0^T\|(u_a+u)_3\cdot \partial_3R\|^2\nonumber\\
&\leq \epsilon^2\int_0^T\|\frac{1}{\psi(x_3)}Zu_a\cdot  Z R\|^2+\epsilon^2\int_0^T\|Zu\|_{L^3}^2\|\nabla R\|_{L^6}^2\nonumber\\
&\;\;+\epsilon^2\int_0^T\|\frac{1}{\psi(x_3)}u^3_a\cdot ZR\|^2+\epsilon^2\int_0^T\|u_3\|_{L^3}^2\|\partial_3R\|_{L^6}^2\nonumber\\
&\leq \epsilon^2\int_0^T\|\nabla Zu_a\|^2_{L^\infty}\|Z R\|^2+\epsilon^2\int_0^T(\|Zu\|+\|\nabla Zu\|)^2\|\nabla^2 R\|^2\nonumber\\
&\;\;+\epsilon^2\int_0^T\|\nabla u^3_a\|^2_{L^\infty}\| ZR\|^2+\epsilon^2\int_0^T(\|u_3\|+\|\nabla u_3\|)^2\|\nabla^2 R\|^2\nonumber\\
&\leq \epsilon^2 C\int_0^T\|Z R\|^2+C\epsilon^{2K-2}.\nonumber
\end{align}
The estimates of $I_2$ and $I_3$ are similar. Actually, we have
\begin{align}
I_2+I_3&\leq \epsilon^2\int_0^T\|Z\partial_3(u_a+u)_3R\|^2+\epsilon^2\int_0^T\|Z\nabla\cdot (u+ u_a)R\|^2\nonumber\\
&\leq \epsilon^2\int_0^T\|Z\partial_3u_a\|^2_{L^\infty}\|R\|^2+\epsilon^2\int_0^T\|Z\partial_3u_3\|^2\|R\|^2_{L^\infty}\nonumber\\
&\;\;+\epsilon^2\int_0^T\|Z\nabla\cdot u\|^2\|R\|_{L^\infty}^2+\epsilon^2\int_0^T\|Z\nabla\cdot u_a\|_{L^\infty}^2\|R\|^2\nonumber\\
&\leq C\epsilon^{2K-2}.\nonumber
\end{align}
While for $I_4$, we have
\begin{align}
I_4&=\int_0^T\|Z(\rho_a+\rho) R\|^2
\leq \int_0^T\|Z\rho_a R\|^2+\int_0^T\|Z\rho R\|^2\nonumber\\
&\leq \int_0^T\|Z\rho_a\|^2_{L^\infty} \|R\|^2+\int_0^T\|Z\rho\|^2 \|R\|_{L^\infty}^2\nonumber\\
&\leq C\epsilon^{2K-2}.\nonumber
\end{align}
Substituting the above estimates into \eqref{nor11} gives
\begin{align}   \label{nor12}
(2\mu+\nu)\epsilon^2&\|Z R\|^2+\int_0^T\|Z R\|^2\leq\epsilon^2 C\int_0^T\|Z R\|^2+C\epsilon^{2K-4}.
\end{align}
By the smallness of $\epsilon$, we finish the proof of Proposition \ref{NZR}.
\end{proof}

\begin{prop} \label{N3u}
Under the assumptions of Proposition \ref{L2}, it holds that
\begin{align}
\|\partial_3u\|^2+\epsilon^2\int_0^T\|\partial_3^2u\|^2\leq C\epsilon^{2K-4}.\nonumber
\end{align}
\end{prop}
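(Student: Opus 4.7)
The plan is to read the second normal derivative $\partial_{33}u$ algebraically off the momentum equation, and to recover the first normal derivative $\partial_3 u$ by a one-dimensional integration-by-parts interpolation between $u$ and $\partial_{33}u$. The pointwise bound on $\|\partial_3u\|^2$ and the dissipation bound on $\epsilon^2\int_0^T\|\partial_{33}u\|^2$ will then both follow from the tangential estimates (Propositions~\ref{L2}--\ref{tan}) and the $R$-estimates (Propositions~\ref{NR}--\ref{NZR}).

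For the horizontal components, the second equation of \eqref{3.1} yields
\begin{equation*}
\mu\epsilon^2\partial_{33}u_i=(\rho_a+\rho)\bigl(\partial_tu_i+(u_a+u)\cdot\nabla u_i+u\cdot\nabla u^i_a\bigr)+\partial_i\rho-\tfrac{\partial_i\rho_a}{\rho_a}\rho-(\rho_a+\rho)\partial_i\phi-\mu\epsilon^2\triangle_y u_i-(\mu+\nu)\epsilon^2\partial_i\nabla\cdot u-H_i-\epsilon^KR_{u,i},
\end{equation*}
for $i=1,2$. Every term on the right apart from the transport piece $(u_a+u)_3\partial_3u_i$ is tangential or pure forcing and is bounded in $L^\infty_tL^2_x$ by $C\epsilon^{K-1}$; the transport piece introduces a mild circular dependence on $\|\partial_3u\|$ that we will close by Young's inequality together with the interpolation below. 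For the vertical component, the same inversion applied to \eqref{nor5} adds the summands $\partial_3\rho=R$ and $\tfrac{\partial_3\rho_a}{\rho_a}\rho$. Invoking $\int_0^T\|R\|^2\le C\epsilon^{2K-2}$ from Proposition~\ref{NR}, together with the bound $\|\tfrac{\partial_3\rho_a}{\rho_a}\rho\|\le C\epsilon^{K-1}$ (the quotient $|\epsilon\partial_3\rho_a/\rho_a|$ is pointwise bounded by the fast decay of the leading boundary profile in $z=x_3/\epsilon$) and $\|\partial_3\phi\|\le C\epsilon^{K-1}$ from the control of $\epsilon\nabla\phi$ in Proposition~\ref{L2}, one obtains $\|\partial_{33}u\|\le C\epsilon^{K-4}$ in $L^\infty_tL^2_x$ and $\epsilon^2\int_0^T\|\partial_{33}u\|^2\le C\epsilon^{2K-4}$.

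The pointwise bound on $\partial_3u$ is obtained by a one-dimensional integration by parts. For each fixed $(t,y)$,
\begin{equation*}
\int_0^\infty|\partial_3 u_i(t,y,x_3)|^2\,dx_3=-\int_0^\infty u_i\,\partial_{33}u_i\,dx_3+\bigl[u_i\,\partial_3u_i\bigr]_{x_3=0}^{x_3=\infty}.
\end{equation*}
The boundary term at infinity vanishes by decay; at $x_3=0$ it vanishes for $i=3$ because $u_3|_{\partial\Omega}=0$, and equals $-|u_i(t,y,0)|^2\le 0$ for $i=1,2$ thanks to the Navier-slip condition $\partial_3u_i=u_i$ in \eqref{2.8}. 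Integrating over $y\in\mathbb{R}^2$ and applying Cauchy-Schwarz yields $\|\partial_3u\|^2\le\|u\|\,\|\partial_{33}u\|$, so combining $\|u\|\le C\epsilon^K$ from Proposition~\ref{L2} with the previous $\|\partial_{33}u\|\le C\epsilon^{K-4}$ gives $\|\partial_3u\|^2\le C\epsilon^{2K-4}$.

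The principal obstacle is the mixed-derivative term $(\mu+\nu)\epsilon^2\partial_3\nabla_y\cdot u_y$ produced by the inversion of \eqref{nor5}, which is not controlled in $L^\infty_tL^2_x$ by the tangential estimates alone. My remedy is to substitute $\nabla\cdot u$ from the continuity equation: the identity $\partial_3\nabla_y\cdot u_y=\partial_3\nabla\cdot u-\partial_{33}u_3$ trades the mixed derivative for tangential derivatives of $\rho$ together with $\partial_{33}u_3$, the latter absorbed on the left with its $\epsilon^2$ weight. After one further integration by parts in $x_3$ using $(u_a+u)_3|_{\partial\Omega}=0$, the residual contributions involving $\partial_3R$ reduce to quantities controlled by $\int_0^T\|R\|^2$ and $\int_0^T\|ZR\|^2$ from Propositions~\ref{NR} and~\ref{NZR}. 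The strong density boundary layer enters in the most dangerous way here, and it is precisely the Hardy-type structural bound $|x_3\partial_3\rho_a|\le C$, guaranteed by the fast decay of the leading profile $\Upsilon^0$, that tames these interactions.
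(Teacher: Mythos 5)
Your argument is essentially correct but takes a genuinely different route from the paper. The paper never inverts the momentum equation for the \emph{horizontal} components: since the boundary is characteristic, the authors follow Masmoudi--Rousset, passing to the vorticity $\omega_y$, subtracting $u_y^\perp$ so that $\eta=\omega_y-u_y^\perp$ vanishes on $\partial\Omega$, and running an energy estimate on the transport-diffusion equation \eqref{eta} to obtain $\|\eta\|^2+\epsilon^2\int_0^T\|\nabla\eta\|^2\le C\epsilon^{2K-4}$; $\|\partial_3u_3\|$ is read directly off the continuity equation (no interpolation), and only $\partial_{33}u_3$ is read off \eqref{nor14}, and only in $L^2_tL^2_x$. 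You instead read $\partial_{33}u$ off the momentum equation for \emph{all} components, paying the factor $\epsilon^{-2}$, and recover $\partial_3u$ by the one-dimensional interpolation $\|\partial_3u_i\|^2\le\|u_i\|\,\|\partial_{33}u_i\|$ using the favourable sign $-2\beta|u_i|^2\le0$ of the boundary term produced by the Navier-slip condition. This does close: $\|u\|\le C\epsilon^K$ leaves enough room that $\|\partial_{33}u\|\le C\epsilon^{K-4}$ (resp.\ $C\epsilon^{K-3}$ for the horizontal part) suffices. Two caveats are worth flagging. First, after the continuity substitution $\partial_3\nabla_y\cdot u_y=\partial_3\nabla\cdot u-\partial_{33}u_3$, the expression for $\partial_3\nabla\cdot u$ contains $\partial_3u_y\cdot\nabla_y\rho_a$, so the $L^\infty_t$ estimate for $\|\partial_{33}u_3\|$ depends on $\|\partial_3u_y\|$; the a priori bound $\|\partial_3u_y\|\lesssim\epsilon$ is \emph{not} strong enough here (it gives only $\|\partial_3u_3\|^2\lesssim\epsilon^{K+1}$), so you must first obtain $\|\partial_{33}u_y\|$ and hence $\|\partial_3u_y\|$ by interpolation from the $u_y$-equations, and only then close the $u_3$-estimate --- or simply get $\|\partial_3u_3\|\le C\epsilon^{K-2}$ directly from the continuity equation as the paper does, bypassing $\partial_{33}u_3$ in $L^\infty_t$ altogether. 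Second, the ``circular'' transport piece $(u_a+u)_3\partial_3u_i$ is actually not circular, since $(u_a+u)_3$ vanishes on $\partial\Omega$ and the product is a bounded multiple of $Z_3u_i$, already controlled by the tangential estimates; your Young's-inequality patch works but is unnecessary. Finally, note that the gain from your approach is local to this proposition: the vorticity formulation is what makes the subsequent conormal and higher normal estimates (Propositions~\ref{NZ3u}, \ref{N33u}, \ref{NZ33u}, \ref{N333u}) tractable, because applying $Z^\alpha$ or $\partial_3^k$ destroys the favourable boundary sign, whereas the $\eta$-equation retains an honest Dirichlet structure.
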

\begin{proof}
Using the first equation of \eqref{3.1}, we have
\begin{align} \label{nor13}
-\partial_3 u_3&=\frac{1}{\rho_a+\rho}[\partial_t\rho+(u_a+u)\cdot\nabla\rho]+\nabla_y\cdot u_y\nonumber\\
&\;\;+\frac{1}{\rho_a+\rho}[u\cdot\nabla\rho_a+\rho\nabla\cdot u_a-\epsilon^KR_\rho],
\end{align}
which yields
\begin{align}
\|\partial_3 u_3\|&\leq C(\|\partial_t\rho\|+\|\nabla\rho\|+\|\nabla_y\cdot u_y\|+\|u\cdot\nabla\rho_a\|+\|\rho\nabla\cdot u_a\|+\epsilon^K)\nonumber\\
&\leq C\epsilon^{K-2},\nonumber
\end{align}
where we have used the estimates in Proposition \ref{NR} and in previous two sections.
From the equation of $u_3$, we have
\begin{align}  \label{nor14}       
(2\mu+\nu)\epsilon^2\partial_{33} u_3=&\;(\rho_a+\rho)\big(\partial_t u_3+(u_a+u)\cdot\nabla u_3+u\cdot\nabla u_a^3\big)\nonumber\\
&+\big(\partial_3 \rho-\frac{\partial_3\rho_a}{\rho_a}\rho\big)-(\rho_a+\rho)\partial_3\phi+\mu\epsilon^2\triangle_y u_3\nonumber\\
&+(\mu+\nu)\epsilon^2\partial_3\nabla_y\cdot u_y+H(\rho, \rho_a, u_a)_3+\epsilon^KR_u.
\end{align}
Taking $L^2L^2$ norm to both side and utilizing Proposition \ref{NR} and the tangential estimates of Proposition \ref{L2} and \ref{tan} gives
\begin{align}         
\epsilon^4\int_0^T\|\partial_{33} u_3\|^2\leq& \;C\int_0^T\big(\|\partial_t u_3\|^2+\|\nabla u_3\|^2+\|u\|^2\big)\nonumber\\
&+\int_0^T\big(\|\partial_3 \rho\|^2+\frac{1}{\epsilon^2}\|\rho\|^2+\|\partial_3\phi\|^2+\epsilon^4\|\triangle_y u_3\|^2\big)\nonumber\\
&+\int_0^T\big(\epsilon^4\|\partial_3\nabla_y\cdot u_y\|^2+\|H(\rho, \rho_a, u_a)_3\|^2+\epsilon^K\|R_u\|^2\big)\nonumber\\
\leq& \;C\epsilon^{2K-2}.\nonumber
\end{align}
Next, we are going to control $\|\partial_3u_y\|$ which can be achieved by estimating the vorticity
$$\omega=
\left(
\begin{array}{cccccccc}
\partial_2u_3-\partial_3u_2\\
\partial_3u_1-\partial_1u_3\\
\partial_1u_2-\partial_2u_1\\
\end{array}
\right).$$
Actually, from the above expression, we find that the estimate of $\|\partial_3u_y\|$ is equivalent to the estimate of $\|\omega_y\|$.
Applying $\nabla\times$ to the second equation of \eqref{3.1} gives
\begin{align}   \label{curl equation}       
(\rho_a+\rho)&\big(\partial_t \omega+(u_a+u)\cdot\nabla \omega\big)=\mu\epsilon^2\triangle w+F,
\end{align}
with the source term $F$ given by
\begin{align}        
F=&-\nabla(\rho_a+\rho)\times\partial_t u+\omega\cdot\nabla[(\rho_a+\rho)(u_a+u)]-\nabla\times((\rho_a+\rho)u\cdot\nabla u_a)\nonumber\\
&+\nabla\frac{\rho}{\rho_a}\times\nabla\rho_a+\nabla(\rho_a+\rho)\times\nabla\phi+\nabla\times H(\rho, \rho_a, u_a)+\epsilon^K\nabla\times R_u.\nonumber
\end{align}
From the boundary condition of $u$, we can determine the boundary condition of $\omega_y$ as
$$\omega_y\mid_{x_3=0}=
\left(
\begin{array}{cccccccc}
\partial_2u_3-\partial_3u_2\\
\partial_3u_1-\partial_1u_3\\
\end{array}
\right)\mid_{x_3=0}=
\left(
\begin{array}{cccccccc}
-u_2\\
u_1\\
\end{array}
\right):=u_y^\perp.$$
This motivates us to introduce $\eta=\omega_y-u_y^\perp$, since it satisfies
\begin{align}         
\eta\mid_{x_3=0}=0.\nonumber
\end{align}
Due to the estimates of $u_y^\perp$, it sufficient to give the control of $\eta$.
Using \eqref{curl equation} and the equation of $u_y$, we have
\begin{align}   \label{eta}      
(\rho_a+\rho)&\big(\partial_t \eta+(u_a+u)\cdot\nabla \eta\big)=\mu\epsilon^2\triangle \eta+F_y+G,
\end{align}
where
\begin{align}         
G&=(\rho_a+\rho)u\cdot\nabla(u_a^y)^\perp+(\nabla_y^\perp\rho-\frac{\rho}{\rho_a}\nabla_y^\perp\rho_a)-(\rho_a+\rho)\nabla_y^\perp\phi\nonumber\\
&-(\mu+\nu)\epsilon^2\nabla_y^\perp\nabla\cdot u-H_y(\rho)^\perp-\epsilon^{K}R_u^\perp.\nonumber
\end{align}
Now, multiplying \eqref{eta} by $\eta$, integrating over $[0,T]\times\Omega$ yields
\begin{align}        
\int_\Omega&(\rho_a+\rho)\eta^2+\mu\epsilon^2\int_0^T\int_\Omega|\nabla \eta|^2=\int_0^T\int_\Omega\partial_t(\rho_a+\rho)\eta^2\nonumber\\
&+\int_0^T\int_\Omega\nabla\cdot((\rho_a+\rho)(u_a+u))\eta^2+\int_0^T\int_\Omega (F_y+G)\cdot\eta.\nonumber
\end{align}
By the tangential estimates of Proposition \ref{L2} and \ref{tan} and Proposition \ref{NR}, the source term can be controlled as
\begin{align}        
\int_0^T\|(F_y+G)\|^2\leq C\epsilon^{2K-4}.\nonumber
\end{align}
Therefore, we get
\begin{align}        
\|\eta\|^2+\epsilon^2\int_0^T\|\nabla \eta\|^2\leq C\epsilon^{2K-4}.\nonumber
\end{align}
By the definition of $\eta$, we find
\begin{align}
\|\partial_3u_y\|^2\leq \|\omega_y\|^2+\|\nabla_yu_3\|^3\leq \|\eta\|^2+\|u_y^\perp\|^2+ \|\nabla_yu_3\|^3\leq C\epsilon^{2K-4},\nonumber
\end{align}
and
\begin{align}
\epsilon^2\int_0^T\|\partial_3^2u_y\|^2
&\leq \epsilon^2\int_0^T\|\partial_3\omega_y\|^2+\epsilon^2\int_0^T\|\partial_3\nabla_yu_3\|^2\nonumber\\
&\leq \epsilon^2\int_0^T\|\partial_3\eta\|^2+\epsilon^2\int_0^T\|u_y^\perp\|^2+\epsilon^2\int_0^T\|\partial_3\nabla_yu_3\|^2\nonumber\\
&\leq C\epsilon^{2K-4}.\nonumber
\end{align}
The proof of Proposition \ref{N3u} is completed.
\end{proof}

\begin{prop} \label{NZ3u}
Under the assumptions of Proposition \ref{L2}, it holds that
\begin{align}
\|Z\partial_3u\|^2+\epsilon^2\int_0^T\|Z\partial_3^2u\|^2\leq C\epsilon^{2K-6}.\nonumber
\end{align}
\end{prop}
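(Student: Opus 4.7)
The plan is to mirror the proof of Proposition \ref{N3u}, applying one extra conormal derivative $Z$ throughout, while relying on the already-established bound for $ZR$ in Proposition \ref{NZR} and the tangential estimates in Propositions \ref{L2}--\ref{tan}. As in the proof of Proposition \ref{N3u}, I would split into three parts: (i) $Z\partial_3 u_3$ from the continuity equation, (ii) $Z\partial_3^2 u_3$ from the vertical momentum equation, and (iii) $Z\partial_3 u_y$ together with $\epsilon^2\int_0^T\|Z\partial_3^2 u_y\|^2$ via the vorticity equation.

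For (i), I would apply $Z$ to the identity \eqref{nor13} for $\partial_3 u_3$. Every term on the right-hand side is either a purely tangential quantity (bounded by Propositions \ref{L2}, \ref{tan}), a derivative of $\rho_a$ times a conormal quantity, or contains $\partial_3\rho=R$; the last one is controlled by $\|ZR\|\lesssim \epsilon^{K-3}$ from Proposition \ref{NZR}, giving $\|Z\partial_3 u_3\|^2\leq C\epsilon^{2K-6}$. For (ii), I would apply $Z$ to \eqref{nor14} and take $L^2L^2$ norms. The only dangerous term is $Z\partial_3\rho=\partial_3 Z\rho$ (modulo the $[Z_3,\partial_3]$ commutator, which is again expressible via $R$), whose $L^2L^2$ norm is controlled by Proposition \ref{NZR} at the level $\int_0^T\|ZR\|^2\leq C\epsilon^{2K-4}$. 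The $Z\partial_3\phi$ terms are handled through the Poisson equation together with Proposition \ref{Nphi1} and the elliptic bound $\epsilon^2\|\nabla^2\phi\|_{\mathcal{H}^3_{co}}\lesssim \|\rho\|_{\mathcal{H}^3_{co}}+\|\phi\|_{\mathcal{H}^3_{co}}+\|\mathcal{C}_\phi\|+C\epsilon^{K+1}$ already used in the tangential analysis. This yields $\epsilon^4\int_0^T\|Z\partial_{33}u_3\|^2\leq C\epsilon^{2K-4}$.

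For (iii), I would again introduce $\eta=\omega_y-u_y^\perp$ and apply $Z$ to equation \eqref{eta}. The key observation is that $Z\eta|_{x_3=0}=0$: for the tangential fields $Z_0,Z_1,Z_2$ this follows from commutation with the trace, and for $Z_3=\psi(x_3)\partial_3$ from $\psi(0)=0$ combined with $\eta|_{x_3=0}=0$. Thus I can perform the same $L^2$ energy estimate as in Proposition \ref{N3u} on $Z\eta$, producing
\begin{align*}
\|Z\eta\|^2+\epsilon^2\int_0^T\|\nabla Z\eta\|^2\leq \int_0^T\|Z(F_y+G)\|^2+\int_0^T\|\mathcal{C}_\eta\|^2+C\epsilon^{2K-6},
\end{align*}
where $\mathcal{C}_\eta$ collects the commutators $[Z,\rho_a+\rho]\partial_t\eta$, $[Z,(\rho_a+\rho)(u_a+u)\cdot\nabla]\eta$ and $\mu\epsilon^2[Z,\triangle]\eta$. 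The source term $Z(F_y+G)$ involves first-order tangential derivatives of $\nabla_y^\perp\rho$, $\nabla_y^\perp\phi$, $\nabla\rho_a$ paired with $\partial_t u$ or $u$, and $\epsilon^2\nabla_y^\perp\nabla\cdot u$; each factor is either tangential or contains exactly one factor of $R$ or $\partial_3\rho_a$, so it is handled by combining Propositions \ref{L2}, \ref{tan}, \ref{NR}, \ref{NZR}. The $L^2L^2$ bound on $Z\eta$ then yields $\|Z\partial_3 u_y\|^2\leq C\epsilon^{2K-6}$ and $\epsilon^2\int_0^T\|Z\partial_3^2 u_y\|^2\leq C\epsilon^{2K-6}$.

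The main obstacle will be two-fold: first, controlling $\epsilon^2[Z_3,\triangle]\eta$ in the $\eta$-energy estimate — by \eqref{tool3} this produces $\partial_{33}Z^\beta\eta$ type terms which must be handled by the same integration-by-parts trick used for $J_{67}$ in the tangential estimates, using $Z\eta|_{\partial\Omega}=0$ to absorb the boundary contributions and $\frac{\mu}{2}\epsilon^2\|\nabla Z\eta\|^2$ to absorb the leading interior term; second, the appearance of $\nabla(\rho_a+\rho)\times \partial_t u$ in $F_y$, where the strong boundary layer makes $\partial_3\rho_a=O(\epsilon^{-1})$. After applying $Z$, the worst piece $Z\nabla\rho_a \cdot \partial_t u$ scales like $\epsilon^{-1}\cdot\epsilon^{K-2}=\epsilon^{K-3}$ in $L^2L^2$ thanks to the $\epsilon$-uniform decay of the boundary-layer profiles and the tangential bound $\|\partial_t u\|_{\mathcal{H}^2_{co}}\leq C\epsilon^{K-2}$, producing precisely the $\epsilon^{2K-6}$ loss; any lazier estimate would lose an extra power of $\epsilon$ and break the bootstrap. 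Once the careful accounting is done, Gronwall's inequality closes the estimate.
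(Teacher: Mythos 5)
Your plan is correct and follows essentially the same route as the paper: estimate $Z\partial_3 u_3$ from the continuity equation using Proposition~\ref{NZR}, estimate $\epsilon^2\int_0^T\|Z\partial_{33}u_3\|^2$ by applying $Z$ to \eqref{nor14}, and control $Z\partial_3 u_y$ via the energy estimate on $Z\eta$ (equation \eqref{Zeta} with commutator $\mathcal{C}^1_\eta$), absorbing the $\epsilon^2[Z,\triangle]\eta$ commutator by integration by parts with the homogeneous boundary condition $Z\eta|_{x_3=0}=0$ and a small multiple of $\epsilon^2\|\partial_3 Z\eta\|^2$. Only a small bookkeeping slip: the worst term $\nabla\rho_a\times Z\partial_t u$ uses $\|Z\partial_t u\|\lesssim\|Z^2 u\|\leq C\epsilon^{K-2}$ rather than $\|\partial_t u\|_{\mathcal{H}^2_{co}}\leq C\epsilon^{K-2}$ (the latter is actually $O(\epsilon^{K-3})$), but the resulting $L^2L^2$ bound $\epsilon^{K-3}$ is the same.
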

\begin{proof}
Using the first equation of \eqref{3.2}, we have
\begin{align}
-Z\partial_3 u_3=&\;Z\Big\{\frac{1}{\rho_a+\rho}[\partial_t\rho+(u_a+u)\cdot\nabla\rho]+\nabla_y\cdot u_y\nonumber\\
&+\frac{1}{\rho_a+\rho}[u\cdot\nabla\rho_a+\rho\nabla\cdot u_a-\epsilon^KR_\rho]\Big\}.\nonumber
\end{align}
So, from the tangential estimates and Proposition \ref{NZR}, we get
\begin{align}
\|Z\partial_3 u_3\|^2&\leq C\epsilon^{2K-6}.\nonumber
\end{align}
Applying $Z$ to \eqref{nor14} yields
\begin{align}         
(2\mu+\nu)\epsilon^2Z\partial_{33} u_3=&\;Z[(\rho_a+\rho)\big(\partial_t u_3+(u_a+u)\cdot\nabla u_3+u\cdot\nabla u_a^3\big)]\nonumber\\
&+Z\big(\partial_3 \rho-\frac{\partial_3\rho_a}{\rho_a}\rho\big)-Z[(\rho_a+\rho)\partial_3\phi]+\mu\epsilon^2Z\triangle_y u_3\nonumber\\
&+(\mu+\nu)\epsilon^2Z\partial_3\nabla_y\cdot u_y+ZH(\rho, \rho_a, u_a)_3+\epsilon^KZR_u.\nonumber
\end{align}
Taking $L^2L^2$ norm to both side and using the estimates in Proposition \ref{L2}, \ref{tan} and \ref{NZR} gives
\begin{align}         
\epsilon^4\int_0^T\|Z\partial_{33} u_3\|^2&\leq C\epsilon^{2K-4}.\nonumber
\end{align}
Next, applying $Z$ to \eqref{eta} gives
\begin{align}   \label{Zeta}      
(\rho_a+\rho)&\big(\partial_t Z\eta+(u_a+u)\cdot\nabla Z\eta\big)=\mu\epsilon^2\triangle Z\eta+ZF_y+ZG+\mathcal{C}^1_{\eta},
\end{align}
where
\begin{align}         
\mathcal{C}^1_{\eta}=-[Z,\rho_a+\rho]\partial_t\eta-[Z,(\rho_a+\rho)(u_a+u)\cdot\nabla]\eta+\mu\epsilon^2[Z,\triangle]\eta.\nonumber
\end{align}
A direct energy estimate to equation \eqref{Zeta} yields
\begin{align}      \label{nor15}  
\|Z\eta\|^2+\mu\epsilon^2\int_0^T\|\nabla Z\eta\|^2\leq&\; C\int_0^T\|Z\eta\|^2+\int_0^T\|ZF_y\|^2+\int_0^T\|ZG\|^2\nonumber\\
&+\int_0^T\int_\Omega Z\eta\cdot\mathcal{C}^1_{\eta}+C\epsilon^{2K-6}.
\end{align}
Using the tangential estimates of Proposition \ref{L2} and \ref{tan} and Proposition \ref{NR} and \ref{NZR}, we have
\begin{align}        
\int_0^T\|ZF_y\|^2+\int_0^T\|ZG\|^2\leq C\epsilon^{2K-6}.\nonumber
\end{align}
While for the commutator, one has
\begin{align}         
\int_0^TZ\eta\cdot\mathcal{C}^1_{\eta}&\leq \int_0^T\|Z(\rho_a+\rho)\partial_t\eta\|^2+\|Z[(\rho_a+\rho)(u_a+u)]\cdot\nabla\eta\|^2\nonumber\\
&\;\;+\mu\epsilon^2\int_0^T\int_\Omega Z\eta\cdot(\psi_1\partial_3+\psi_2\partial_{33})\eta+C\int_0^T\|Z\eta\|^2\nonumber\\
&\leq C\int_0^T\|Z\eta\|^2+C\epsilon^2\int_0^T \|\partial_3\eta\|^2+\kappa\mu\epsilon^2\int_0^T\|\partial_3Z\eta\|^2+C\epsilon^{2K-6}\nonumber\\
&\leq \kappa\mu\epsilon^2\int_0^T\|\partial_3Z\eta\|^2+C\int_0^T\|Z\eta\|^2+C\epsilon^{2K-6},\nonumber
\end{align}
with $\kappa$ being sufficiently small,  by integrating by parts and H\"{o}lder inequality. Putting the above two estimates into \eqref{nor15} and using the smallness of $\kappa$, we complete the proof of Proposition \ref{NZ3u}.
\end{proof}

\begin{prop} \label{Nphi2}
Under the assumptions of Proposition \ref{L2}, it holds that
\begin{align}
\|\partial_3^3\phi\|^2+\epsilon^2\|Z\partial_3^3\phi\|^2\leq C\epsilon^{2K-8}.\nonumber
\end{align}
\end{prop}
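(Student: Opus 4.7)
The strategy mirrors the proof of Proposition \ref{Nphi1}: the Poisson equation expresses normal derivatives of $\phi$ algebraically in terms of $\rho$, $\phi$ itself, and tangential derivatives, so obtaining $\partial_3^3\phi$ amounts to differentiating once more in $x_3$ and invoking the normal estimates for $R=\partial_3\rho$ from Propositions \ref{NR}--\ref{NZR}. Concretely, starting from
\begin{align*}
\epsilon^2\partial_3^2\phi=-\epsilon^2\triangle_y\phi+\rho-e^{-\phi_a}(e^{-\phi}-1)+\epsilon^{K+1}R_\phi,
\end{align*}
I differentiate in $x_3$ to obtain
\begin{align*}
\epsilon^2\partial_3^3\phi=-\epsilon^2\partial_3\triangle_y\phi+R-\partial_3\bigl[e^{-\phi_a}(e^{-\phi}-1)\bigr]+\epsilon^{K+1}\partial_3 R_\phi,
\end{align*}
take $L^2$ norms, and dominate each term.

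The driving term is $R$: Proposition \ref{NR} gives $\|R(t)\|\le C\epsilon^{K-2}$ pointwise in $t$, so the $\|R\|$ contribution yields $\epsilon^2\|\partial_3^3\phi\|\le C\epsilon^{K-2}$, i.e. the desired $\|\partial_3^3\phi\|^2\le C\epsilon^{2K-8}$. The remaining terms are smaller: $\epsilon^2\|\partial_3\triangle_y\phi\|=\epsilon^2\|Z^2\partial_3\phi\|$ is dominated via the tangential bound $\|Z^2(\epsilon\nabla\phi)\|^2\le C\epsilon^{2K-4}$ (Proposition \ref{tan}); the semilinear piece $\partial_3[e^{-\phi_a}(e^{-\phi}-1)]$ expands into $-\partial_3\phi_a\,e^{-\phi_a}(e^{-\phi}-1)-e^{-\phi_a-\phi}\partial_3\phi$, whose worst factor $\|\partial_3\phi_a\|_{L^\infty}=O(\epsilon^{-1})$ (from the strong boundary layer $\Phi^0$) is tamed by $\|\phi\|\le C\epsilon^K$ from the $L^2$ estimate; and $\epsilon^{K+1}\partial_3R_\phi=O(\epsilon^K)$ by \eqref{2.5}. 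For the second estimate I apply $Z$ to the same identity
\begin{align*}
\epsilon^2 Z\partial_3^3\phi=-\epsilon^2 Z\partial_3\triangle_y\phi+ZR-Z\partial_3\bigl[e^{-\phi_a}(e^{-\phi}-1)\bigr]+\epsilon^{K+1}Z\partial_3R_\phi,
\end{align*}
and proceed analogously; now the dominant term is $\|ZR\|\le C\epsilon^{K-3}$ from Proposition \ref{NZR}, giving $\epsilon^2\|Z\partial_3^3\phi\|\le C\epsilon^{K-3}$, hence $\epsilon^2\|Z\partial_3^3\phi\|^2\le C\epsilon^{2K-8}$.

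For the $Z$-estimate the only subtlety is the mixed term $\epsilon^2 Z\partial_3\triangle_y\phi$. When $Z\in\{Z_0,Z_1,Z_2\}$ this is purely tangential on $\partial_3\phi$, controlled by $\|Z^3(\epsilon\nabla\phi)\|^2\le C\epsilon^{2K-6}$ from Proposition \ref{tan}; when $Z=Z_3=\psi\partial_3$ one has $Z\partial_3\triangle_y\phi=\psi\,\partial_3^2\triangle_y\phi$, a quantity of the form $\partial_3^2 Z^2\phi$, which is exactly the content of Proposition \ref{Nphi1}: $\epsilon^4\|Z^2\partial_3^2\phi\|^2\le C\epsilon^{2K-4}$. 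The $Z$-derivative of the semilinear piece requires the bounds $\|Z\partial_3\phi_a\|_{L^\infty}=O(\epsilon^{-1})$ (again from the boundary-layer structure of $\phi_a$) together with $\|Z\nabla\phi\|\le C\epsilon^{K-2}$ from the tangential estimates; these combine to $O(\epsilon^{K-2})$, which is absorbed by the $\|ZR\|=O(\epsilon^{K-3})$ contribution. No commutator with $\partial_3^3$ appears on the left side since we simply apply $Z$ to the already-derived algebraic identity, so the only obstacle is the careful bookkeeping of $\epsilon$-powers; once Propositions \ref{NR}, \ref{NZR} and \ref{Nphi1} are in hand, the estimate is essentially static.
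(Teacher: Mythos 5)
Your proposal is correct and follows the same route as the paper: differentiate the Poisson identity once in $x_3$ to express $\epsilon^2\partial_3^3\phi$ in terms of $R=\partial_3\rho$, tangential derivatives of $\phi$, and a controlled semilinear piece; apply $Z$ for the second bound; and invoke Propositions \ref{NR}, \ref{NZR}, \ref{Nphi1} and the tangential estimates. The paper states this in two lines; you have merely filled in the term-by-term bookkeeping, and the $\epsilon$-power accounting (dominant contributions $\|R\|\lesssim\epsilon^{K-2}$ and $\|ZR\|\lesssim\epsilon^{K-3}$) is consistent.
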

\begin{proof}
Applying $\partial_3$ to the Poisson equation, we have
\begin{align}
\epsilon^2\partial_3^3\phi=-\epsilon^2\partial_3\triangle_y\phi+\partial_3\rho-\partial_3[e^{-\phi_a}(e^{-\phi}-1)]+\epsilon^{K+1}\partial_3R_\phi.\nonumber
\end{align}
Applying $Z$ to the above equation yields
\begin{align}
\epsilon^2Z\partial_3^3\phi=-\epsilon^2Z\partial_3\triangle_y\phi+Z\partial_3\rho-Z\partial_3[e^{-\phi_a}(e^{-\phi}-1)]+\epsilon^{K+1}Z\partial_3R_\phi.\nonumber
\end{align}
Then, by taking $L^2$ norm to both side of the above two equations and using the estimates we just established in \ref{NZR}, we can get the desired estimates. Hence, we complete the proof of Proposition \ref{Nphi2}.
\end{proof}

\begin{prop} \label{NZ2R}
Under the assumptions of Proposition \ref{L2}, it holds that
\begin{align}
\epsilon^2&\|Z^2R\|^2+\int_0^T\|Z^2R\|^2\leq C\epsilon^{2K-6}.\nonumber
\end{align}
\end{prop}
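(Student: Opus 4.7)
The statement continues the pattern of Propositions \ref{NR} and \ref{NZR}: each additional conormal derivative on $R=\partial_3\rho$ costs a factor $\epsilon^{-2}$. The plan is therefore to mimic the proof of Proposition \ref{NZR} with $Z$ replaced by $Z^2$. Apply $Z^2$ to the transport-damping equation \eqref{nor6}:
\begin{align*}
(2\mu+\nu)\epsilon^2\Big(\partial_t Z^2R+(u_a+u)\cdot\nabla Z^2R+Z^2R\,\partial_3(u_a+u)_3+Z^2R\,\nabla\cdot(u_a+u)\Big)&\\
+(\rho_a+\rho)Z^2R=(2\mu+\nu)\epsilon^2Z^2M_1+Z^2M_2+Z^2M_3+\mathcal{C}_2,&
\end{align*}
where $\mathcal{C}_2$ collects the four commutators obtained by replacing $Z$ with $Z^2$ in \eqref{nor10}. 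Multiplying by $Z^2R$ and integrating by parts exactly as in Proposition \ref{NR} yields, after absorbing the divergence and time-derivative of $\rho_a+\rho$ via their $L^\infty$ bounds,
\begin{equation*}
\epsilon^2\|Z^2R\|^2+\int_0^T\|Z^2R\|^2\leq C\int_0^T\|\epsilon^2Z^2M_1+Z^2M_2+Z^2M_3\|^2+\int_0^T\|\mathcal{C}_2\|^2+C\epsilon^{2K}.
\end{equation*}

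Next I would estimate the source terms. The explicit expressions of $M_1,M_2,M_3$ given in Section~3.5 involve at worst one spatial derivative of $u$, one $\partial_3\phi$, and products with approximate-solution quantities. Combining Propositions \ref{L2}--\ref{tan} (tangential bounds) with the normal estimates already established in Propositions \ref{Nphi1}, \ref{NR}, \ref{NZR}, \ref{N3u}, \ref{NZ3u}, and \ref{Nphi2}, every component of $Z^2M_i$ is controlled by $C\epsilon^{2K-6}$: the loss of two factors of $\epsilon^{-2}$ compared with the $\|M_i\|\lesssim\epsilon^{K-1}$ bound used in Proposition \ref{NR} matches exactly the two additional tangential derivatives.

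The main obstacle is the commutator $\mathcal{C}_2$, and within it the term $[Z^2,(u_a+u)\cdot\nabla]R$. Expanding produces in particular $(u_a+u)\cdot[Z^2,\nabla]R$, which by \eqref{tool2} generates terms proportional to $\partial_3 R$ at tangential order one, and then the piece $(u_a+u)_3\,\partial_3 R$ that is not directly controlled by any of the preceding estimates. The key point, as already used in Proposition \ref{NZR}, is that $u_3|_{x_3=0}=u_a^3|_{x_3=0}=0$, so $(u_a+u)_3=\psi(x_3)\tilde v_3$ with $\tilde v_3\in L^\infty$; this converts $(u_a+u)_3\partial_3 R$ into a bounded multiple of $Z_3 R$, which is exactly the quantity estimated in Proposition \ref{NZR}. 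All other pieces of $[Z^2,(u_a+u)\cdot\nabla]R$, as well as $[Z^2,\partial_3(u_a+u)_3]R$, $[Z^2,\nabla\cdot(u_a+u)]R$, and $[Z^2,\rho_a+\rho]R$, are handled by distributing the two $Z$ derivatives so that at most one of them falls on $(\rho,u)$ (controlled by Proposition \ref{tan}) while the rest land on $u_a,\rho_a$; the singular factors $\epsilon^{-1}$ produced when differentiating the boundary-layer profiles are absorbed by the explicit $\epsilon^2$ prefactor.

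Assembling these bounds gives $\int_0^T\|\mathcal{C}_2\|^2\leq C\epsilon^2\int_0^T\|Z^2R\|^2+C\epsilon^{2K-6}$, and the first term on the right is absorbed into the left-hand side for $\epsilon$ small, yielding the desired estimate $\epsilon^2\|Z^2R\|^2+\int_0^T\|Z^2R\|^2\leq C\epsilon^{2K-6}$.
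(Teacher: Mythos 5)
Your proposal matches the paper's proof in both structure and detail: apply $Z^2$ to the transport--damping equation \eqref{nor6}, multiply by $Z^2R$ and integrate, bound the source terms $Z^2M_i$ by the already-established tangential and lower-order normal estimates, and control the commutator by exploiting that $(u_a+u)_3$ vanishes at $x_3=0$ (so $(u_a+u)_3\partial_3 = \tilde{v}_3\,Z_3$ with $\tilde v_3$ bounded), leaving a small $\epsilon^2\|Z^2R\|^2$ term to be absorbed. This is precisely the paper's argument.
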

\begin{proof}
For $|\alpha|=2$, applying $Z^\alpha$ to the equation \eqref{nor6} yields
\begin{align}   \label{nor16}
(2\mu+\nu)\epsilon^2\Big(\partial_tZ^\alpha R&+(u_a+u)\cdot\nabla Z^\alpha R+\partial_3(u_a+u)_3Z^\alpha R
\Big)\nonumber\\
&+(2\mu+\nu)\epsilon^2Z^\alpha R\nabla\cdot (u+ u_a)+(\rho_a+\rho)Z^\alpha R\nonumber\\
=&(2\mu+\nu)\epsilon^2Z^\alpha M_1+Z^\alpha M_2+Z^\alpha M_3+\mathcal{C},
\end{align}
where the commutator $\mathcal{C}$ is
\begin{align}
\mathcal{C}&=-(2\mu+\nu)\epsilon^2\Big([Z^\alpha,(u_a+u)\cdot\nabla]R+[Z^\alpha,\partial_3(u_a+u)_3]R\Big)\nonumber\\
&-(2\mu+\nu)\epsilon^2[Z^\alpha,\nabla\cdot (u+ u_a)]R-[Z^\alpha,(\rho_a+\rho)] R.\nonumber
\end{align}
We perform energy estimate to \eqref{nor16} to obtain that
\begin{align}   \label{nor17}
\epsilon^2\|Z^\alpha R\|^2+\int_0^T\|Z^\alpha R\|^2\leq&\int_0^T\|(2\mu+\nu)\epsilon^2Z^\alpha M_1+Z^\alpha M_2+Z^\alpha M_3\|^2\nonumber\\
&+\int_0^T\int_\Omega \mathcal{C}^2+C\epsilon^{2K}.
\end{align}
Let's deal with the first term on the right-hand side of \eqref{nor17}. From the tangential estimates in Section 4 and Section 5, after a complicated but straightforward computation, we get
\begin{align}
\int_0^T&\|\epsilon^2Z^\alpha M_1\|^2+\|Z^\alpha M_2\|^2+\|Z^\alpha M_3\|^2\leq  C\epsilon^{2K-6}.\nonumber
\end{align}
We next turn to the estimate of the commutator $\mathcal{C}$. First,
\begin{align}
\int_0^T\|\mathcal{C}\|^2&\leq\epsilon^2\int_0^T\|[Z^\alpha,(u_a+u)\cdot\nabla]R\|^2+\epsilon^2\int_0^T\|[Z^\alpha,\partial_3(u_a+u)_3]R\|^2\nonumber\\
&\;\;+\epsilon^2\int_0^T\|[Z^\alpha,\nabla\cdot (u+ u_a)]R\|^2+\int_0^T\|[Z^\alpha,(\rho_a+\rho)] R\|^2\nonumber\\
&:=\sum_{k=1}^4I_k.\nonumber
\end{align}
For $I_1$, one has
\begin{align}
I_1&\leq\epsilon^2\int_0^T\|[Z^2,(u_a+u)\cdot]\nabla R\|^2
+\epsilon^2\int_0^T\|(u_a+u)\cdot[Z^2,\nabla]R\|^2\nonumber\\
&\leq \epsilon^2\int_0^T\|Z^2(u_a+u)\cdot\nabla R\|^2+\epsilon^2\int_0^T\|Z(u_a+u)\cdot Z\nabla R\|^2\nonumber\\
&\;\;+\epsilon^2\int_0^T\|(u_a+u)\cdot[Z^2,\nabla]R\|^2.\nonumber
\end{align}
Thus we have
\begin{align}
I_1
\leq &\;\epsilon^2\int_0^T\|Z^2u_a\cdot\nabla R\|^2+\epsilon^2\int_0^T\|Z^2u\cdot\nabla R\|^2\nonumber\\
&+\epsilon^2\int_0^T\|Zu_a\cdot Z\nabla R\|^2+\epsilon^2\int_0^T\|Zu\cdot Z\nabla R\|^2\nonumber\\
&+\epsilon^2\int_0^T\|(u_a+u)_3\cdot\partial_3 R\|^2+\epsilon^2\int_0^T\|(u_a+u)_3\cdot\partial_3 ZR\|^2\nonumber\\
\leq&\; \epsilon^2\int_0^T\|\frac{1}{\psi(z)}Z^2u_a\cdot ZR\|^2+\epsilon^2\int_0^T\|Z^2u\|^2_{L^3}\|\nabla R\|_{L^6}^2\nonumber\\
&+\epsilon^2\int_0^T\|\frac{1}{\psi(x_3)}Zu_a\cdot Z^2 R\|^2+\epsilon^2\int_0^T\|\frac{1}{\psi(x_3)}ZuZ^2 R\|^2\nonumber\\
&+\epsilon^2\int_0^T\|\frac{1}{\psi(x_3)}u_a\cdot ZR\|^2+\epsilon^2\int_0^T\|u\|_{L^3}^2\|\partial_3 R\|_{L^6}^2\nonumber\\
&+\epsilon^2\int_0^T\|\frac{1}{\psi(x_3)}u_a\cdot Z^2R\|^2+\epsilon^2\int_0^T\|u_3\partial_3Z R\|^2\nonumber\\
\leq&\; \epsilon^2\int_0^T\|\nabla Z^2u_a\|^2_{L^\infty}\|ZR\|^2+\epsilon^2\int_0^T(\|Z^2u\|^2+\|\nabla Z^2u\|^2)\|\nabla^2 R\|^2\nonumber\\
&+\epsilon^2\int_0^T\|\nabla Zu_a\|_{L^\infty}^2\|Z^2 R\|^2+\epsilon^2\int_0^T\|\nabla Zu\|_{L^\infty}^2\| Z^2 R\|^2\nonumber\\
&+\epsilon^2\int_0^T\|\nabla u_a\|_{L^\infty}^2\| ZR\|^2+\epsilon^2\int_0^T(\|u\|^2+\|\nabla u\|^2)\|\nabla^2 R\|^2\nonumber\\
&+\epsilon^2\int_0^T\|\nabla u_a\|_{L^\infty}^2\| Z^2R\|^2+\epsilon^2\int_0^T\|u\|_{L^\infty}^2\| \partial_3ZR\|^2\nonumber\\
\leq&\; C_a\epsilon^2\int_0^T\| Z^2R\|^2+ C\epsilon^2\int_0^T\|Zu\|^2_{H^3}\| Z^2R\|^2+C\epsilon^{2K-4}.\nonumber
\end{align}
For the estimates of $I_2$ and $I_3$, we obtain that
\begin{align}
I_2+I_3\leq &\;\epsilon^2\int_0^T\|Z^2\partial_3(u_a+u)_3R\|^2+\|Z\partial_3(u_a+u)_3ZR\|^2\nonumber\\
&+\epsilon^2\int_0^T\|Z^2\nabla\cdot (u+ u_a)R\|^2+\|Z\nabla\cdot (u+ u_a)ZR\|^2\nonumber\\
\leq&\; \epsilon^2\int_0^T\|Z^2\partial_3u_a\|_{L^\infty}^2\|R\|^2+\|Z^2\partial_3u_3\|^2\|R\|_{L^\infty}^2\nonumber\\
&+\epsilon^2\int_0^T\|Z\partial_3u_a\|_{L^\infty}^2\|ZR\|^2+\|Z\partial_3u_3\|_{L^6}^2\|ZR\|_{L^3}^2\nonumber\\
&+\epsilon^2\int_0^T\|Z^2\nabla\cdot u_a\|_{L^\infty}^2\|R\|^2+\|Z^2\nabla\cdot u\|^2\|R\|_{L^\infty}^2\nonumber\\
&+\epsilon^2\int_0^T\|Z\nabla\cdot  u_a\|^2_{L^\infty}\|ZR\|^2+\|Z\nabla\cdot u\|^2_{L^6}\|ZR\|_{L^3}^2\nonumber\\
\leq&\; C\epsilon^{2K-6}+\epsilon^2\int_0^T\|Z\partial_{33} u\|^2\leq C\epsilon^{2K-6}.\nonumber
\end{align}
Finally, for $I_4$, we have
\begin{align}
I_4=&\int_0^T\|[Z^\alpha,(\rho_a+\rho)] R\|^2=\int_0^T\|Z^2(\rho_a+\rho) R\|^2+\int_0^T\|Z(\rho_a+\rho) ZR\|^2\nonumber\\
&\leq \int_0^T\|Z^2\rho_a\|^2_{L^\infty} \|R\|^2+\int_0^T\|Z^2\rho\|^2 \|R\|_{L^\infty}^2\nonumber\\
&+\int_0^T\|Z\rho_a\|^2_{L^\infty} \|ZR\|^2+\int_0^T\|Z\rho\|^2_{L^\infty} \|ZR\|^2\nonumber\\
\leq& \;C\epsilon^{2K-6}.\nonumber
\end{align}
Thus, from \eqref{nor17} and the above estimates for $I_j$, we find that
\begin{align}
\epsilon^2&\|Z^2 R\|^2+\frac{1}{2}\int_0^T\|Z^2R\|^2\leq C_a\epsilon^2\int_0^T\| Z^2R\|^2+C\epsilon^{2K-6}.\nonumber
\end{align}
The proof of the Proposition \ref{NZ2R} is completed.

\end{proof}

\begin{prop} \label{NZ23u}
Under the assumptions of Proposition \ref{L2}, it holds that
\begin{align}
\|Z^2\partial_3u\|^2+\epsilon^2\int_0^T\|Z^2\partial_3^2u\|^2\leq C\epsilon^{2K-8}.\nonumber
\end{align}
\end{prop}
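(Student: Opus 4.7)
The plan is to follow the same three-stage scheme used in Propositions \ref{N3u} and \ref{NZ3u}, but now with one more tangential derivative. First I would recover $Z^2\partial_3 u_3$ from the continuity equation by solving for $\partial_3 u_3$ as in \eqref{nor13} and applying $Z^2$:
\begin{equation*}
-Z^2\partial_3 u_3 = Z^2\Bigl\{\tfrac{1}{\rho_a+\rho}\bigl[\partial_t\rho+(u_a+u)\cdot\nabla\rho\bigr]+\nabla_y\cdot u_y+\tfrac{1}{\rho_a+\rho}[u\cdot\nabla\rho_a+\rho\nabla\cdot u_a-\epsilon^K R_\rho]\Bigr\}.
\end{equation*}
All right-hand-side terms are controlled by the tangential estimates of Propositions \ref{L2}--\ref{tan} together with the normal estimate $\|Z^2 R\|\leq C\epsilon^{K-4}$ from Proposition \ref{NZ2R}, giving $\|Z^2\partial_3 u_3\|^2\leq C\epsilon^{2K-8}$. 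For $\epsilon^2\int_0^T\|Z^2\partial_{33}u_3\|^2$ I would isolate $(2\mu+\nu)\epsilon^2\partial_{33}u_3$ in the $u_3$-equation \eqref{nor14} and apply $Z^2$; taking $L^2L^2$ norms and using Propositions \ref{L2}, \ref{tan}, \ref{NR}, \ref{NZR}, \ref{NZ2R} gives the desired bound.

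For the horizontal components I would apply $Z^2$ to the modified-vorticity equation \eqref{eta}. Writing $|\alpha|=2$, one obtains
\begin{equation*}
(\rho_a+\rho)\bigl(\partial_t Z^\alpha\eta+(u_a+u)\cdot\nabla Z^\alpha\eta\bigr)=\mu\epsilon^2\triangle Z^\alpha\eta+Z^\alpha F_y+Z^\alpha G+\mathcal{C}^2_\eta,
\end{equation*}
where the commutator
\begin{equation*}
\mathcal{C}^2_\eta=-[Z^\alpha,\rho_a+\rho]\partial_t\eta-[Z^\alpha,(\rho_a+\rho)(u_a+u)\cdot\nabla]\eta+\mu\epsilon^2[Z^\alpha,\triangle]\eta.
\end{equation*}
Since $\eta|_{x_3=0}=0$, the operator $Z^\alpha$ preserves this boundary condition, so a direct energy estimate produces the dissipation $\mu\epsilon^2\int_0^T\|\nabla Z^\alpha\eta\|^2$ on the left-hand side with no boundary contribution. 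The source terms $\|Z^\alpha F_y\|^2$ and $\|Z^\alpha G\|^2$ are estimated exactly as in Proposition \ref{NZ3u} by the previously obtained tangential and normal estimates, yielding a contribution of order $\epsilon^{2K-8}$.

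The main obstacle, as expected, is the commutator term $\mu\epsilon^2[Z^\alpha,\triangle]\eta$, which produces second normal derivatives $\partial_{33}Z^\beta\eta$ for $|\beta|\leq 1$. Proceeding as in the proof of Proposition \ref{NZ3u}, I would use identity \eqref{tool3} to rewrite
\begin{equation*}
\mu\epsilon^2\int_0^T\int_\Omega Z^\alpha\eta\cdot[Z^\alpha,\partial_{33}]\eta=\mu\epsilon^2\sum_{|\beta|\leq 1}\int_0^T\int_\Omega Z^\alpha\eta\bigl(\psi^{1,\beta,2}\partial_3 Z^\beta\eta+\psi^{2,\beta,2}\partial_{33}Z^\beta\eta\bigr),
\end{equation*}
integrate by parts in $x_3$ on the second-derivative piece (the boundary term vanishes because $Z^\alpha\eta|_{x_3=0}=0$), and absorb the resulting $\kappa\mu\epsilon^2\int_0^T\|\partial_3 Z^\alpha\eta\|^2$ into the dissipation on the left by choosing $\kappa$ small. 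The remaining lower-order pieces $\epsilon^2\int_0^T\|\partial_3 Z^\beta\eta\|^2$ for $|\beta|\leq 1$ are controlled by Propositions \ref{N3u} and \ref{NZ3u}. The transport-type commutators $[Z^\alpha,\rho_a+\rho]\partial_t\eta$ and $[Z^\alpha,(\rho_a+\rho)(u_a+u)\cdot\nabla]\eta$ are handled by standard product estimates together with the previously established bounds on $\eta$ and $\partial_3\eta$; the potentially singular $Z^2\rho_a$ factor is benign because it is $O(\epsilon^{-2})$ against tangential quantities of order $\epsilon^{K-2}$. Putting these pieces together and invoking Gronwall gives
\begin{equation*}
\|Z^2\eta\|^2+\mu\epsilon^2\int_0^T\|\nabla Z^2\eta\|^2\leq C\epsilon^{2K-8},
\end{equation*}
from which $\|Z^2\partial_3 u_y\|^2\leq\|Z^2\eta\|^2+\|Z^2 u_y^\perp\|^2+\|Z^2\nabla_y u_3\|^2\leq C\epsilon^{2K-8}$ and $\epsilon^2\int_0^T\|Z^2\partial_{33}u_y\|^2\leq C\epsilon^{2K-8}$ follow as at the end of the proof of Proposition \ref{N3u}. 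Combining the estimates for the vertical and horizontal components completes the proof.
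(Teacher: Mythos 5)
Your proposal is correct and follows the same route the paper indicates: the paper's proof of Proposition \ref{NZ23u} merely says to repeat the argument of Proposition \ref{NZ3u} with $Z^2$ in place of $Z$ applied to the modified-vorticity equation \eqref{eta}, producing the commutator $\mathcal{C}^2_\eta$, and omits the rest; your vertical-component step (continuity equation for $Z^2\partial_3 u_3$, momentum equation for $\epsilon^2 Z^2\partial_{33}u_3$) and your treatment of $[Z^\alpha,\triangle]\eta$ via \eqref{tool3}, integration by parts using $Z^\alpha\eta|_{x_3=0}=0$, and absorption of $\kappa\mu\epsilon^2\int\|\partial_3 Z^\alpha\eta\|^2$ mirror the pattern of Propositions \ref{N3u} and \ref{NZ3u} exactly.

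One remark on your justification of the transport commutators: you write that $Z^2\rho_a$ is $O(\epsilon^{-2})$ ``against tangential quantities of order $\epsilon^{K-2}$,'' but in fact $Z^2\rho_a=O(1)$ in $L^\infty$ --- this is precisely the point of the conormal field $Z_3=\psi(x_3)\partial_3$, since $\psi(x_3)/\epsilon\lesssim z$ turns the fast-variable derivatives of the boundary-layer profiles $\Upsilon^i(t,y,x_3/\epsilon)$ into bounded quantities like $z\,\Upsilon^i_z$. This matters: if $Z^2\rho_a$ really scaled like $\epsilon^{-2}$, then the contribution $\|Z^2\rho_a\,\partial_t\eta\|^2$ with $\|\partial_t\eta\|=\|Z_0\eta\|\lesssim\epsilon^{K-3}$ (from Proposition \ref{NZ3u}) would be of size $\epsilon^{2K-10}$, which does \emph{not} close at the $\epsilon^{2K-8}$ level. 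The argument goes through only because $\|Z^2\rho_a\|_{L^\infty}$ is uniformly bounded, making that commutator of size $\epsilon^{2K-6}$; you should replace the $O(\epsilon^{-2})$ count with the correct $O(1)$ bound for conormal derivatives of the approximate solution.
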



\begin{proof}
Based on Proposition \ref{NZ2R}, the proof here can be given in a similar fashion as Proposition \ref{NZ3u}.
Indeed, we only need to consider the energy estimate for the following equation which is obtained by applying $Z^2$ to \eqref{eta}
\begin{align}        
(\rho_a+\rho)&\big(\partial_t Z^2\eta+(u_a+u)\cdot\nabla Z^2\eta\big)=\mu\epsilon^2\triangle Z^2\eta+Z^2F_y+Z^2G+\mathcal{C}^2_{\eta},
\end{align}
with $\mathcal{C}^2_{\eta}$ being the commutator.
We omit the details for the sake of simplicity.
\end{proof}
Next, we are going to deal with the estimates of the second order normal derivatives of $\rho$.
\begin{prop} \label{N3R}
Under the assumptions of Proposition \ref{L2}, it holds that
\begin{align}
\epsilon^2&\|\partial_3R\|^2+\int_0^T\|\partial_{3}R\|^2\leq C\epsilon^{2K-4}.\nonumber
\end{align}
\end{prop}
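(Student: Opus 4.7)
The plan is to derive a damped transport equation for $\partial_3 R$ by differentiating \eqref{nor6} in $x_3$, and then perform an $L^2$ energy estimate in the same spirit as Propositions \ref{NR}, \ref{NZR} and \ref{NZ2R}, but with the complication that normal differentiation produces highly singular source terms.

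First, apply $\partial_3$ to \eqref{nor6}. The principal part inherits exactly the transport-damping structure
\begin{align*}
(2\mu+\nu)\epsilon^2\bigl[\partial_t \partial_3 R &+ (u_a+u)\cdot \nabla \partial_3 R + \partial_3 R\, \partial_3(u_a+u)_3 \\
&+ \partial_3 R\, \nabla\cdot(u_a+u)\bigr] + (\rho_a+\rho)\partial_3 R,
\end{align*}
and the remainder splits into the source $(2\mu+\nu)\epsilon^2\partial_3 M_1+\partial_3 M_2+\partial_3 M_3$ plus a commutator $\mathcal{C}^{\ast}$ produced when $\partial_3$ crosses the transport and damping coefficients, namely
\begin{align*}
\mathcal{C}^{\ast} = -(2\mu+\nu)\epsilon^2\bigl[\partial_3(u_a+u)\cdot \nabla R + R\,\partial_{33}(u_a+u)_3 + R\,\partial_3\nabla\cdot(u_a+u)\bigr]-(\partial_3\rho_a+R)R.
\end{align*}

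Next, multiply the resulting equation by $\partial_3 R$ and integrate over $[0,t]\times\Omega$. Integration by parts on the convection term produces a harmless $\nabla\cdot(u_a+u)|\partial_3 R|^2$ contribution, while the damping $(\rho_a+\rho)|\partial_3 R|^2$ uses the positive lower bound of $\rho_a+\rho$ to generate both quantities $\epsilon^2\|\partial_3 R\|^2$ and $\int_0^t\|\partial_3 R\|^2$ on the left. I then estimate the sources: $\partial_3 M_3$ is the most benign since $\partial_3^2\phi$ was already controlled in Proposition \ref{Nphi1} and all other entries of $M_3$ carry an $\epsilon^2$ prefactor that absorbs the loss from normal differentiation of $u$ via Proposition \ref{N3u}; the terms in $\partial_3 M_2$ of the form $(\rho_a+\rho)^2\partial_3\partial_t u_3$ and $(\partial_3\rho_a+R)(\partial_t u_3+\dots)$ are handled by combining the tangential estimates of Propositions \ref{L2}--\ref{tan} with Propositions \ref{NR} and \ref{N3u}, the singular factor $\partial_3\rho_a\sim \epsilon^{-1}$ accounting for the degraded $\epsilon^{2K-4}$ bound.

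The main obstacle is the crossing term $R\,\partial_{33} u_3$, which appears both inside $(2\mu+\nu)\epsilon^2\partial_3 M_1$ and inside $\mathcal{C}^{\ast}$, as well as the term $\partial_3(u_a+u)\cdot\nabla R$ whose $\partial_3 u$ factor has only the modest control of Proposition \ref{N3u}. These are tamed by the same cancellation idea that produced \eqref{nor6} in the first place: rather than estimating $R\,\partial_{33} u_3$ directly, I substitute the normal component of the momentum equation \eqref{nor14} (solving for $(2\mu+\nu)\epsilon^2\partial_{33}u_3$ in terms of $\partial_3\rho$, $\rho$, $\partial_3\phi$, tangential derivatives of $u$, and remainders), which converts this highly singular quantity into a combination of terms already estimated in the previous propositions, with the $\epsilon^{-2}$ factor matching the available $\int_0^t\|R\|^2\leq C\epsilon^{2K-2}$ bound. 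The term $\partial_3(u_a+u)\cdot\nabla R$ is split into tangential components, handled by the previously established $\|ZR\|$ bound via $\nabla R=\nabla_y\rho$ tangentially and $\partial_3 R$ normally (absorbed on the left by smallness of $\epsilon$), and the normal component $\partial_3(u_a+u)_3\partial_3 R$ which is already present in the principal part.

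After absorbing the small transport corrections on the left using the positivity of the damping, applying Grönwall's inequality to the resulting integral inequality closes the estimate and yields $\epsilon^2\|\partial_3 R\|^2+\int_0^t\|\partial_3 R\|^2\leq C\epsilon^{2K-4}$.
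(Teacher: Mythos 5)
Your overall strategy is exactly the paper's: apply $\partial_3$ to the damped transport equation \eqref{nor6} to get \eqref{nor18}, retaining the same transport--damping principal part, then do an $L^2$ energy estimate and bound the resulting source $\partial_3\big((2\mu+\nu)\epsilon^2 M_1+M_2+M_3\big)$ and commutator $\mathcal{C}$ using the previously established tangential, $R$, and $\partial_3 u$ bounds, together with the a priori $L^\infty$ control on $R$. Your commutator $\mathcal{C}^\ast$ is precisely the paper's $\mathcal{C}$.

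Where you deviate is in the handling of the $\partial_{33}u_3$ terms. Two small remarks. First, a bookkeeping point: the term appearing in $(2\mu+\nu)\epsilon^2\partial_3 M_1$ is $\epsilon^2\partial_3\rho_a\,\partial_{33}u_3$ (from differentiating $-\partial_3\rho_a\,\nabla\cdot u$), not $R\,\partial_{33}u_3$; the latter comes only from the commutator via $-[\partial_3,(\rho_a+\rho)]R$ being absent and $R\,\partial_3\nabla\cdot(u_a+u)$ being present. Since $\partial_3\rho_a\sim\epsilon^{-1}$, the $M_1$ piece is $\sim\epsilon\,\partial_{33}u_3$ and is directly killed by $\epsilon^2\int_0^T\|\partial_3^2 u\|^2\leq C\epsilon^{2K-4}$ from Proposition \ref{N3u}. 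Second, your proposed substitution of \eqref{nor14} to eliminate $\partial_{33}u_3$ is a valid route, but the paper shows it is not needed: because the commutator carries the factor $(2\mu+\nu)\epsilon^2$ and $\|R\|_{L^\infty}\leq C\epsilon$ under the a priori assumption \eqref{apriori}, one estimates $\epsilon^2\int_0^T\|R\,\partial_{33}u_3\|^2\leq \epsilon^2\|R\|_{L^\infty}^2\int_0^T\|\partial_{33}u_3\|^2\leq C\epsilon^{2K-2}$ directly, again by Proposition \ref{N3u}. Your substitution reintroduces $R\cdot\partial_3\rho=R^2$ and $R\cdot\frac{\partial_3\rho_a}{\rho_a}\rho$, which then require their own estimates; these do close (e.g.\ $\int_0^T\|R^2\|^2\leq\|R\|_{L^\infty}^2\int_0^T\|R\|^2\leq C\epsilon^{2K}$), so the route is sound, but it is a detour compared with the paper's direct estimate. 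The remainder of your argument (splitting $\partial_3(u_a+u)\cdot\nabla R$ into tangential pieces controlled by Proposition \ref{NZR} and a normal piece absorbed on the left, and Gr\"onwall at the end) matches the paper.
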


\begin{proof}
Applying $\partial_3$ to equation \eqref{nor6} gives
\begin{align}   \label{nor18}
(2\mu+\nu)\epsilon^2\Big(\partial_t\partial_3R&+(u_a+u)\cdot\nabla \partial_3R+\partial_3(u_a+u)_3\partial_3R
\Big)\nonumber\\
&+(2\mu+\nu)\epsilon^2\partial_3R\nabla\cdot (u+ u_a)+(\rho_a+\rho)\partial_3R\nonumber\\
=&\partial_3[(2\mu+\nu)\epsilon^2M_1+M_2+M_3]+\mathcal{C},
\end{align}
where commutator $\mathcal{C}$ is given by
\begin{align}
\mathcal{C}=&-(2\mu+\nu)\epsilon^2\Big([\partial_3,(u_a+u)\cdot\nabla]R+[\partial_3,\partial_3(u_a+u)_3]R\Big)\nonumber\\
&-(2\mu+\nu)\epsilon^2[\partial_3,\nabla\cdot (u+ u_a)]R-[\partial_3,(\rho_a+\rho)] R.\nonumber
\end{align}
Multiplying \eqref{nor18} by $\partial_3R$ and integrating over $[0,T]\times \Omega$, we find
\begin{align}
(2\mu+\nu)\epsilon^2&\Big(\frac{1}{2}\int_\Omega|\partial_3 R|^2-\frac{1}{2}\int_\Omega|\partial_3 R(0)|^2
+\frac{1}{2}\int_0^T\int_\Omega\nabla\cdot(u_a+u)|\partial_3 R|^2\Big)\nonumber\\
&+(2\mu+\nu)\epsilon^2\int_0^T\int_\Omega\partial_3(u_a+u)_3|\partial_3 R|^2+\int_0^T\int_\Omega(\rho_a+\rho)|\partial_3 R|^2\nonumber\\
=&\int_0^T\int_\Omega\partial_3\Big((2\mu+\nu)\epsilon^2M_1+M_2+M_3\Big)\partial_3 R+\int_0^T\int_\Omega\mathcal{C}\partial_3 R.\nonumber
\end{align}
By Young's inequality, one has
\begin{align}   \label{nor19}
(2\mu+\nu)\epsilon^2&\|\partial_3 R\|^2+\int_0^T\|\partial_3 R\|^2\nonumber\\
&\leq\int_0^T\|\partial_3\Big((2\mu+\nu)\epsilon^2M_1+M_2+M_3\Big)\|^2+\int_0^T\|\mathcal{C}\|^2.
\end{align}
Using the estimates in Proposition \ref{Nphi1}-\ref{NZ23u}, we find
\begin{align}
\int_0^T\|\partial_3\Big((2\mu+\nu)\epsilon^2M_1+M_2+M_3\Big)\|^2\leq C\epsilon^{2K-4}.\nonumber
\end{align}
Now let us consider the commutator estimate
\begin{align}
\int_0^T\|\mathcal{C}\|^2&\leq \epsilon^2\int_0^T\|[\partial_3,(u_a+u)\cdot\nabla]R\|^2+\epsilon^2\int_0^T\|[\partial_3,\partial_3(u_a+u)_3]R\|^2\nonumber\\
&\;\;+\epsilon^2\int_0^T\|[\partial_3,\nabla\cdot (u+ u_a)]R\|^2+\int_0^T\|[\partial_3,(\rho_a+\rho)] R\|^2\nonumber\\
&:=\sum_{k=1}^4I_k.\nonumber
\end{align}
For $I_1$, we have
\begin{align}
I_1&\leq \epsilon^2\int_0^T\|\partial_3(u_a+u)\cdot\nabla R\|^2\nonumber\\
&\leq \epsilon^2\int_0^T\|\partial_3u_a\|_{L^\infty}\|\nabla R\|^2+\epsilon^2\int_0^T\|\partial_3u\|_{L^3}^2\|\nabla R\|_{L^6}^2\nonumber\\
&\leq C\epsilon^2\int_0^T\|\partial_3 R\|^2+C\epsilon^{2K-4}.\nonumber
\end{align}
For $I_2$ and $I_3$, we have
\begin{align}
I_2+I_3&\leq \epsilon^2\int_0^T\|[\partial_3,\partial_3(u_a+u)_3]R\|^2+\epsilon^2\int_0^T[\partial_3,\nabla\cdot (u+ u_a)]R\|^2\nonumber\\
&\leq C\epsilon^{2K-4}.\nonumber
\end{align}
For $I_4$, we have
\begin{align}
I_4&\leq \int_0^T\|\partial_3\rho_a R\|^2+\int_0^T\|\partial_3\rho R\|^2\nonumber\\
&\leq \int_0^T\|\partial_3\rho_a\|^2_{L^\infty} \|R\|^2+\int_0^T\|R\|_{L^\infty}^2\|R\|^2\nonumber\\
&\leq C\epsilon^{2K-4}.\nonumber
\end{align}
Combining the above estimates with \eqref{nor19}, we get
\begin{align}
\epsilon^2&\|\partial_3 R\|^2+\int_0^T\|\partial_3 R\|^2\leq \epsilon^{2K-4}.\nonumber
\end{align}
The proof of Proposition \ref{N3R} is completed.
\end{proof}
Next, we have
\begin{prop} \label{Nphi3}
Under the assumptions of Proposition \ref{L2}, it holds that
\begin{align}
\|\partial_3^4\phi\|^2\leq C\epsilon^{2K-10}.\nonumber
\end{align}
\end{prop}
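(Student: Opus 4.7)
The plan is to view the Poisson equation as a static relation expressing $\partial_3^2\phi$ algebraically in terms of $\rho$, tangential derivatives of $\phi$, and error terms, and then apply $\partial_3^2$ once more. Starting from
\begin{equation*}
\epsilon^2\partial_3^2\phi = -\epsilon^2\triangle_y\phi + \rho - e^{-\phi_a}(e^{-\phi}-1) + \epsilon^{K+1}R_\phi,
\end{equation*}
I differentiate twice in $x_3$ to obtain
\begin{equation*}
\epsilon^2\partial_3^4\phi = \partial_3^2\rho - \epsilon^2\triangle_y\partial_3^2\phi - \partial_3^2\bigl[e^{-\phi_a}(e^{-\phi}-1)\bigr] + \epsilon^{K+1}\partial_3^2 R_\phi,
\end{equation*}
and then simply take the $L^2(\Omega)$ norm. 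Thus the whole task reduces to controlling each of the four terms on the right-hand side by $C\epsilon^{2K-6}$.

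The first term is the dominant one and is precisely where the index $2K-10$ is determined: since $\partial_3^2\rho = \partial_3 R$, Proposition \ref{N3R} gives $\|\partial_3 R\|^2\leq C\epsilon^{2K-6}$. For the second term, observe that $\triangle_y\partial_3^2\phi$ is a purely tangential second derivative of $\partial_3^2\phi$, so $\|\triangle_y\partial_3^2\phi\|\leq C\|Z^2\partial_3^2\phi\|$, and Proposition \ref{Nphi1} yields $\epsilon^4\|Z^2\partial_3^2\phi\|^2\leq C\epsilon^{2K-4}$. The fourth term is trivial by \eqref{2.5}, since $\|\partial_3^2 R_\phi\|\leq C\epsilon^{-2}$ gives $\epsilon^{2K+2}\|\partial_3^2 R_\phi\|^2\leq C\epsilon^{2K-2}$.

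The only slightly delicate piece is the nonlinear term. Expanding $\partial_3^2[e^{-\phi_a}(e^{-\phi}-1)]$ by the Leibniz rule produces three representative contributions: $\partial_3^2 e^{-\phi_a}\cdot(e^{-\phi}-1)$, which costs at most $\epsilon^{-2}$ from the strong boundary layer in $\phi_a$ but is multiplied by a factor controlled by $\|\phi\|^2\leq C\epsilon^{2K}$ from Proposition \ref{L2}; the cross term $\partial_3 e^{-\phi_a}\cdot\partial_3(e^{-\phi}-1)$, which costs $\epsilon^{-1}$ from $\phi_a$ but is paired with $\|\partial_3\phi\|^2\leq C\epsilon^{2K-2}$ (since $\epsilon^2\|\nabla\phi\|^2\leq C\epsilon^{2K}$); and the purely interior term involving $e^{-\phi_a}\partial_3^2\phi$, which is estimated by $\|\partial_3^2\phi\|^2\leq C\epsilon^{2K-4}$ from Proposition \ref{Nphi1}. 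Each of these is $\leq C\epsilon^{2K-4}$.

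I expect no serious obstacle in this step; it is a purely static (elliptic-type) bound once the highest-order density estimate and the normal derivative estimates for $\phi$ of one degree lower have been secured. Collecting all four estimates gives $\epsilon^4\|\partial_3^4\phi\|^2\leq C\epsilon^{2K-6}$, and dividing by $\epsilon^4$ produces the announced bound $\|\partial_3^4\phi\|^2\leq C\epsilon^{2K-10}$. The sharpness is set by the factor $\|\partial_3 R\|^2$ coming from Proposition \ref{N3R}; all other contributions are strictly better by at least $\epsilon^2$.
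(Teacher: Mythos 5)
Your proposal is correct and follows essentially the same route as the paper: the paper likewise applies $\partial_{33}$ to the Poisson equation and takes $L^2$ norms using the previously established Propositions \ref{Nphi1}--\ref{N3R}. Your write-up simply makes explicit the term-by-term bookkeeping (with the dominant contribution $\|\partial_3 R\|^2\leq C\epsilon^{2K-6}$ forcing the $\epsilon^{2K-10}$ exponent after dividing by $\epsilon^4$) that the paper leaves implicit.
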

\begin{proof}
Applying $\partial_{33}$ to the Poisson equation, we have
\begin{align}
\epsilon^2\partial^4_{3}\phi=-\epsilon^2\partial_{33}\triangle_y\phi+\partial_{33}\rho-\partial_{33}[e^{-\phi_a}(e^{-\phi}-1)]+\epsilon^{K+1}\partial_{33}R_\phi.\nonumber
\end{align}
Then, by taking $L^2$ norm to both sides of the above two equations and using previous estimates of Proposition \ref{Nphi1}-\ref{N3R}, we complete the proof.
\end{proof}
\begin{prop} \label{NZ3R}
Under the assumptions of Proposition \ref{L2}, it holds that
\begin{align}
\epsilon^2&\|Z\partial_3R\|^2+\int_0^T\|Z\partial_{3}R\|^2\leq C\epsilon^{2K-6}.\nonumber
\end{align}
\end{prop}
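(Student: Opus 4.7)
The strategy mirrors exactly the structure of the proofs of Propositions \ref{NZR}, \ref{NZ2R} and \ref{N3R}: I apply the combined derivative $Z\partial_3$ to the transport--reaction equation \eqref{nor6} satisfied by $R$, then perform an $L^2$ energy estimate in which the damping term $(\rho_a+\rho)R$ of \eqref{nor6} generates the desired control of $\int_0^T\|Z\partial_3 R\|^2$, while the prefactor $(2\mu+\nu)\epsilon^2$ in front of the material derivative generates the weighted boundary term $\epsilon^2\|Z\partial_3 R\|^2$. Concretely, starting from equation \eqref{nor18} (the $\partial_3$-differentiated version of \eqref{nor6}) obtained in the proof of Proposition \ref{N3R}, I apply $Z$ to get an equation of the schematic form
\begin{equation*}
(2\mu+\nu)\epsilon^2\bigl(\partial_t+(u_a+u)\cdot\nabla\bigr)Z\partial_3 R + (\rho_a+\rho)\,Z\partial_3 R = Z\partial_3\bigl((2\mu+\nu)\epsilon^2 M_1+M_2+M_3\bigr) + \widetilde{\mathcal{C}},
\end{equation*}
where $\widetilde{\mathcal{C}}$ collects all commutators arising from passing $Z\partial_3$ through the transport operator, the coefficient $\rho_a+\rho$, and the zeroth-order coefficients $\partial_3(u_a+u)_3$ and $\nabla\cdot(u_a+u)$. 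Multiplying by $Z\partial_3 R$ and integrating over $[0,T]\times\Omega$ yields, after using $|\nabla\cdot(u_a+u)|+|\partial_3(u_a+u)_3|\le C$ and the positivity of $\rho_a+\rho$, the basic inequality
\begin{equation*}
\epsilon^2\|Z\partial_3 R\|^2+\int_0^T\|Z\partial_3 R\|^2 \le C\int_0^T\|Z\partial_3(M_1,M_2,M_3)\|^2+\int_0^T\|\widetilde{\mathcal{C}}\|^2+C\epsilon^{2K}.
\end{equation*}

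For the source term, I expand $Z\partial_3 M_i$ using the definitions of $M_1,M_2,M_3$. The highest-regularity pieces are $Z\partial_3^2 u$ (from the viscosity terms in $M_3$ and the $(\rho_a+\rho)\nabla_y\cdot\partial_3 u_y$ term in $M_1$), $Z\partial_3^2\phi$ and $Z\partial_3\nabla\phi$ (from $(\rho_a+\rho)^2\partial_3\phi$ in $M_2$), and the term $\partial_3^2\rho_a\,\rho/\rho_a$ of size $O(\epsilon^{-2})$ in $M_2$. Each of these is controlled by a previously established estimate: Proposition \ref{NZ3u} gives $\epsilon^2\int_0^T\|Z\partial_3^2 u\|^2\le C\epsilon^{2K-6}$, Proposition \ref{Nphi1} together with Proposition \ref{Nphi2} handles the $\phi$-terms, and the singular coefficient $\partial_3^2\rho_a$ is absorbed by the tangential control of $\rho$ from Proposition \ref{tan} combined with $\|\rho\|_{L^\infty}\le C\epsilon$. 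A careful bookkeeping of the powers of $\epsilon$ (each normal derivative on the profile $\rho_a$ or $u_a$ costs one power of $\epsilon^{-1}$) then gives $\int_0^T\|Z\partial_3(M_1,M_2,M_3)\|^2\le C\epsilon^{2K-6}$.

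For the commutator $\widetilde{\mathcal{C}}$, the most delicate pieces are $[Z\partial_3,(u_a+u)\cdot\nabla]R$ and $[Z\partial_3,\rho_a+\rho]R$. Expanding the first via $Z\partial_3(a\nabla R)-a\nabla Z\partial_3 R$ produces terms of the form $(Z\partial_3 u_a)\nabla R$, $\partial_3 u\cdot Z\nabla R$, $(u_a+u)_3\partial_3 Z\partial_3 R\cdot\text{(low order)}$ etc.; all can be bounded in $L^2_tL^2_x$ by the established norms $\int_0^T\|\partial_3 R\|^2\le C\epsilon^{2K-4}$, $\int_0^T\|ZR\|^2\le C\epsilon^{2K-4}$ and the controls of $\partial_3 u,Z\partial_3 u$ from Propositions \ref{N3u}, \ref{NZ3u}. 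For the second commutator, the critical term is $(\partial_3\rho_a)\,ZR$: since $\|\partial_3\rho_a\|_{L^\infty}=O(\epsilon^{-1})$ while $\int_0^T\|ZR\|^2\le C\epsilon^{2K-4}$ by Proposition \ref{NZR}, one obtains exactly the borderline bound $C\epsilon^{-2}\cdot\epsilon^{2K-4}=C\epsilon^{2K-6}$ that is consistent with the target estimate. All remaining commutator pieces are of the same or better order.

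The main obstacle is precisely this last book-keeping: the coupling of the strong density boundary layer (which makes $\partial_3\rho_a=O(\epsilon^{-1})$) with the normal derivative $\partial_3$ of $R$ pushes the estimate to the edge of what the previously established $L^2_tL^2_x$ norms can absorb, and several commutator terms must be paired in the correct way against the $\epsilon^{2K-4}$ gains of Propositions \ref{NZR} and \ref{N3R} so that no term of size worse than $\epsilon^{2K-6}$ appears. Once the source and commutator estimates are in place, combining them with the basic inequality above and using the smallness of $\epsilon$ to absorb the single term $C\epsilon^2\int_0^T\|Z\partial_3 R\|^2$ (generated by the quadratic commutator $\|\partial_3 u_a\|_{L^\infty}\|Z\partial_3 R\|$-type contribution) into the left-hand side closes the estimate and yields the claimed bound.
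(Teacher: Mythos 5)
Your proposal matches the paper's proof essentially verbatim: the paper also applies $Z$ to the $\partial_3$-differentiated equation \eqref{nor18}, forms the same basic energy inequality with the damping term $(\rho_a+\rho)Z\partial_3 R$ supplying $\int_0^T\|Z\partial_3 R\|^2$ and the prefactor $(2\mu+\nu)\epsilon^2$ supplying $\epsilon^2\|Z\partial_3 R\|^2$, bounds the source $Z\partial_3(M_1,M_2,M_3)$ by $C\epsilon^{2K-6}$ using Propositions \ref{Nphi1}--\ref{N3R}, and closes the commutator estimate to $C\epsilon^2\int_0^T\|Z\partial_3 R\|^2+C\epsilon^{2K-6}$ before absorbing the $\epsilon^2$-weighted term. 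Your identification of $(\partial_3\rho_a)\,ZR$ as the borderline commutator piece, with the power count $\epsilon^{-2}\cdot\epsilon^{2K-4}=\epsilon^{2K-6}$ against Proposition \ref{NZR}, is exactly the bookkeeping implicit in the paper's bound.
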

\begin{proof}
Applying $Z$ to system \eqref{nor18} gives
\begin{align}   \label{nor20}
(2\mu+\nu)\epsilon^2\Big(\partial_tZ\partial_3R&+(u_a+u)\cdot\nabla Z\partial_3R+\partial_3(u_a+u)_3Z\partial_3R
\Big)\nonumber\\
&+(2\mu+\nu)\epsilon^2Z\partial_3R\nabla\cdot (u+ u_a)+(\rho_a+\rho)Z\partial_3R\nonumber\\
=&Z\partial_3[(2\mu+\nu)\epsilon^2M_1+M_2+M_3]+Z\mathcal{C}+\mathcal{\tilde{C}},
\end{align}
with $\mathcal{\tilde{C}}$ given by
\begin{align}
\mathcal{\tilde{C}}&=-(2\mu+\nu)\epsilon^2\Big([Z,(u_a+u)\cdot\nabla]\partial_3R+[Z,\partial_3(u_a+u)_3]\partial_3R\Big)\nonumber\\
&-(2\mu+\nu)\epsilon^2[Z,\nabla\cdot (u+ u_a)]\partial_3R-[Z,(\rho_a+\rho)] \partial_3R.\nonumber
\end{align}
Multiplying \eqref{nor20} by $Z\partial_3R$ and integrating over $[0,T]\times \Omega$, we find
\begin{align}
\epsilon^2&\|Z\partial_3 R\|^2+\int_0^T\|Z\partial_3 R\|^2\nonumber\\
&\leq\int_0^T\|Z\partial_3\Big((2\mu+\nu)\epsilon^2M_1+M_2+M_3\Big)\|^2+\int_0^T\|Z\mathcal{C}\|^2+\int_0^T\|\mathcal{\tilde{C}}\|^2.\nonumber
\end{align}
In view of Proposition \ref{Nphi1}-\ref{N3R}, we find that
\begin{align}
\int_0^T\|Z\partial_3\Big((2\mu+\nu)\epsilon^2M_1+M_2+M_3\Big)\|^2\leq C\epsilon^{2K-6},\nonumber
\end{align}
and
\begin{align}
\int_0^T\|Z\mathcal{C}\|^2+\|\mathcal{\tilde{C}}\|^2\leq & \;\epsilon^2\int_0^T\|Z[\partial_3,(u_a+u)\cdot\nabla]R\|^2+\epsilon^2\int_0^T\|Z[\partial_3,\partial_3(u_a+u)_3]R\|^2\nonumber\\
&+\epsilon^2\int_0^T\|Z[\partial_3,\nabla\cdot (u+ u_a)]R\|^2+\int_0^T\|Z[\partial_3,(\rho_a+\rho)] R\|^2\nonumber\\
\leq&\; C\epsilon^2\int_0^T\|Z\partial_3 R\|^2+C\epsilon^{2K-6}.\nonumber
\end{align}
The combination of the above three inequalities yields the result in Proposition \ref{NZ3R}. The proof is proved.
\end{proof}

\begin{prop} \label{N33u}
Under the assumptions of Proposition \ref{L2}, it holds that
\begin{align}
\|\partial^2_3u\|^2+\epsilon^2\int_0^T\|\partial_3^3u\|^2\leq C\epsilon^{2K-6}.\nonumber
\end{align}
\end{prop}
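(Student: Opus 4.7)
The plan is to extend the strategy of Proposition \ref{N3u} to obtain one more normal derivative, treating the horizontal and vertical components of $u$ separately. Because $\partial_3\eta$ does not vanish on the boundary, the vorticity/$\eta$-equation route used in Propositions \ref{N3u}, \ref{NZ3u}, \ref{NZ23u} is not available here, and the argument will proceed directly from the component momentum equations together with the $x_3$-differentiated continuity equation.

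For the pointwise bound on $\partial_3^2 u$, start with the horizontal components $u_i$, $i=1,2$. Isolate $\mu\epsilon^2\partial_3^2 u_i$ in the horizontal momentum equation; the only RHS term carrying a full derivative of $\rho$ is $\partial_i\rho$, which is tangential and controlled by $\|Z_i\rho\|^2\leq C\epsilon^{2K-2}$ from Proposition \ref{tan}. All remaining RHS terms, including $(\rho_a+\rho)\partial_i\phi$ via the bound $\|\epsilon\nabla\phi\|^2\leq C\epsilon^{2K}$ and the lower-order normal terms via Propositions \ref{N3u}--\ref{NZ23u}, scale at or below $\epsilon^{2K-2}$ in $L^2$. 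Dividing by $\epsilon^4$ yields $\|\partial_3^2 u_i\|^2\leq C\epsilon^{2K-6}$. For the vertical component $u_3$, the analogous use of \eqref{nor14} is limited by the dominance of $R=\partial_3\rho$ on the RHS; instead, I differentiate the continuity equation in $x_3$ (obtaining \eqref{nor3}) and substitute for $\partial_t R$ via the transport equation \eqref{nor6}. This replaces the pointwise $L^\infty_t L^2_x$ contribution of $R$ by the source $M_1,M_2,M_3$ of \eqref{nor6}, whose $L^2$-norm scales like $\epsilon^{K-1}$, and combined with $\|\nabla R\|^2\leq C\epsilon^{2K-6}$ from Propositions \ref{NZR}, \ref{N3R} closes the estimate at the stated rate.

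For the $L^2 L^2$ bound on $\epsilon^2\partial_3^3 u$, apply $\partial_3$ to each component momentum equation and isolate the highest normal derivative. The key new contributions are $\partial_3\partial_i\rho=\partial_i R$ for $i=1,2$ (bounded by $\int_0^T\|ZR\|^2\leq C\epsilon^{2K-4}$ from Proposition \ref{NZR}), $\partial_3 R=\partial_3^2\rho$ (Proposition \ref{N3R}), $\partial_3^2\phi$ (Proposition \ref{Nphi1}), and mixed tangential-normal derivatives $\partial_3 Zu,\ Z\partial_3^2 u$ controlled by Propositions \ref{NZ3u}, \ref{NZ23u}. The commutators produced when $\partial_3$ lands on the boundary-layer coefficients $\partial_3\rho_a,\partial_3^2 u_a=O(\epsilon^{-1})$ are absorbed using the smallness of the factors they multiply together with the $\epsilon^2$ viscous prefactor. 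Summing gives $\int_0^T\|\epsilon^2\partial_3^3 u\|^2\leq C\epsilon^{2K-4}$ and hence the claimed estimate after dividing by $\epsilon^2$.

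The main obstacle is the pointwise control of $\|\partial_3^2 u_3\|^2$: the direct momentum-equation approach yields only $C\epsilon^{2K-8}$ because $R$ dominates the RHS with the sharp bound $\|R\|^2\leq C\epsilon^{2K-4}$ from Proposition \ref{NR}, whereas the target rate is $\epsilon^{2K-6}$. The delicate combined continuity/transport manipulation outlined above is needed to exploit the damping structure of \eqref{nor6} (where $(\rho_a+\rho)R$ appears without an $\epsilon^2$-prefactor, unlike the term $\epsilon^2\partial_t R$) and trade the pointwise loss for an integrable source with better $\epsilon$-scaling. A secondary difficulty is the systematic bookkeeping of the boundary-layer-induced singular commutators, each of which costs a factor of $\epsilon^{-1}$ and must be matched against either the $\epsilon^2$ of viscosity or the smallness of its companion factor.
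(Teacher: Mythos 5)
Your plan departs from the paper's in a way that contains one genuine misconception and one fatal gap. First, your premise that the vorticity/$\eta$-equation route ``is not available here'' because $\partial_3\eta$ does not vanish on the boundary is incorrect: this is exactly what the paper does. It applies $\partial_3$ to \eqref{eta} and integrates by parts, and then handles the resulting boundary term $\mu\epsilon^2\int_0^T\int_{\partial\Omega}\partial_{33}\eta\cdot\partial_3\eta$ by substituting for $\partial_{33}\eta\vert_{x_3=0}$ from the $\eta$-equation itself (the material-derivative part vanishes on $\partial\Omega$ since $\eta\vert_{\partial\Omega}=0$ and the velocity is tangent there), together with the trace estimate and the identification $\partial_3\eta\vert_{x_3=0}=(\partial_{23}u_3,-\partial_{13}u_3)^T$. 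A non-homogeneous boundary value is not an obstruction to the energy method; it only produces a boundary integral that must be controlled.

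The fatal gap is in your treatment of $\epsilon^2\int_0^T\|\partial_3^3 u_y\|^2$. After applying $\partial_3$ to the horizontal momentum equation and isolating $\mu\epsilon^2\partial_3^3 u_i$, the right-hand side contains $(\rho_a+\rho)\partial_t\partial_3 u_i$, for which the only available control is the $L^\infty_t$ bound $\|Z\partial_3 u\|^2\leq C\epsilon^{2K-6}$ from Proposition \ref{NZ3u}; this yields $\int_0^T\|\partial_t\partial_3 u_i\|^2\leq CT\epsilon^{2K-6}$ and hence only $\epsilon^2\int_0^T\|\partial_3^3 u_i\|^2\leq CT\epsilon^{2K-8}$, short of the target by $\epsilon^{-2}$. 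There is no algebraic fix: the whole point of the paper's energy estimate on the $\partial_3$-differentiated $\eta$-equation is that $\partial_t\partial_3\eta$ is not treated as a source at all — it integrates in time into $\|\partial_3\eta\|^2$ on the left, and the viscous term supplies $\mu\epsilon^2\int_0^T\|\nabla\partial_3\eta\|^2$ from which $\epsilon^2\int_0^T\|\partial_3^3 u_y\|^2$ is read off. Your $u_3$-step has a secondary flaw: substituting for $\partial_t R$ through \eqref{nor6} merely reconstitutes \eqref{nor14} and reintroduces $R/\epsilon^2$, so it cannot improve on the $\epsilon^{2K-8}$ you already rejected; what actually closes that piece is applying $\partial_3$ to the continuity equation and bounding $\|\partial_t R\|=\|Z_0 R\|\leq\|ZR\|\leq C\epsilon^{K-3}$ directly via Proposition \ref{NZR}, with no detour through \eqref{nor6}. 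Finally, you identified the pointwise control of $\partial_3^2 u_3$ as the ``main obstacle''; in fact that piece is the most elementary, and the true difficulty — and the reason the paper keeps the vorticity formulation — is the $L^2_tL^2_x$ estimate for the horizontal third normal derivatives just described.
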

\begin{proof}
First, the following estimate can be given in a very similar fashion as Proposition \ref{NZ3u}
\begin{align}
\|\partial^2_3u_3\|^2+\epsilon^2\int_0^T\|\partial_3^3u_3\|^2\leq C\epsilon^{2K-6}.\nonumber
\end{align}
Next, applying $\partial_3$ to \eqref{eta} gives
\begin{align}   \label{nor21}     
(\rho_a+\rho)&\big(\partial_t \partial_3\eta+(u_a+u)\cdot\nabla \partial_3\eta\big)=\mu\epsilon^2\triangle \partial_3\eta+\partial_3(F_y+G)+\mathcal{C}^3_{\eta},
\end{align}
with
\begin{align}     
\mathcal{C}^3_{\eta}=-[\partial_3,(\rho_a+\rho)]\partial_t \eta-[\partial_3,(\rho_a+\rho)(u_a+u)\cdot\nabla] \eta+\mu\epsilon^2[\partial_3,\triangle]\eta.\nonumber
\end{align}
To use energy method, we need to determine the boundary condition for $\partial_3\eta$. Actually, from the expression of $\eta$, we find
$$\partial_3\eta\mid_{x_3=0}=
\left(
\begin{array}{cccccccc}
\partial_{23}u_3-\partial_{33}u_2+\partial_{3}u_2\\
\partial_{33}u_1-\partial_{13}u_3-\partial_{3}u_1\\
\end{array}
\right)\mid_{x_3=0}=
\left(
\begin{array}{cccccccc}
\partial_{23}u_3\\
-\partial_{13}u_3\\
\end{array}
\right)\mid_{x_3=0}.$$
Thus, from energy estimate to \eqref{nor21}, we have
\begin{align}  \label{nor22}    
\|\partial_3\eta\|^2&+\epsilon^2\int_0^T\|\nabla\partial_3\eta\|^2\leq\mu\epsilon^2 \int_0^T\int_{\partial\Omega}\partial_{33}\eta\cdot\partial_{3}\eta
\nonumber\\
&+\int_0^T\|\partial_3(F_y+G)+\mathcal{C}^3_{\eta}\|^2:=I_1+I_2.
\end{align}
Let us consider the estimate of the boundary term $I_1$ first.
Using \eqref{eta}, we can reformulate $I_1$ into
\begin{align}      
I_1&= \mu\epsilon^2\int_0^T\int_{\partial\Omega}\big[(\rho_a+\rho)\big(\partial_t \eta+(u_a+u)\cdot\nabla \eta\big)-F_y-G\big]\cdot\partial_{3}\eta\nonumber\\
&=\mu\epsilon^2\int_0^T\int_{\partial\Omega}\big[F_y+G\big]\cdot\left(
\begin{array}{cccccccc}
-\partial_{23}u_3\\
\partial_{13}u_3\\
\end{array}
\right).\nonumber
\end{align}
Thus, by the trace theorem, we find that
\begin{align}      
I_1&\leq \mu\epsilon^2\int_0^T|(F_y+G)|^2_{L^2(\partial\Omega)}+\mu\epsilon^2\int_0^T|\partial_{3}\nabla_yu_3|^2_{L^2(\partial\Omega)}\nonumber\\
&\leq \mu\epsilon^2\int_0^T\|\nabla (F_y+G)\|\|(F_y+G)\|+\mu\epsilon^2\int_0^T\|\nabla^2\nabla_yu_3\|\|\nabla \nabla_yu_3\|\nonumber\\
&\leq C\epsilon^{2K-6}.\nonumber
\end{align}
Similarly, we can obtain the following estimates by using the results in Proposition \ref{Nphi1}-\ref{NZ3R},
\begin{align}      
I_2&\leq C\epsilon^{2K-6}. \nonumber
\end{align}
Putting the above estimates into \eqref{nor22} gives
\begin{align}     
\|\partial_3\eta\|^2&+\epsilon^2\int_0^T\|\nabla\partial_3\eta\|^2\leq C\epsilon^{2K-6}.\nonumber
\end{align}
The proof of Proposition \ref{N33u} is completed.
\end{proof}

\begin{prop} \label{NZ33u}
Under the assumptions of Proposition \ref{L2}, it holds that
\begin{align}
\|Z\partial^2_3u\|^2+\epsilon^2\int_0^T\|Z\partial_3^3u\|^2\leq C\epsilon^{2K-8}.\nonumber
\end{align}
\end{prop}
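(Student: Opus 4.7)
The plan is to mirror the strategy of Proposition \ref{N33u}, adding one extra tangential derivative $Z$ throughout. First I would split the target into a component $Z\partial_3^2 u_3$ and a horizontal component $Z\partial_3^2 u_y$, since as in the previous step the vertical part is reconstructed from the continuity equation while the horizontal part requires a vorticity equation. For $Z\partial_3^2 u_3$, applying $Z\partial_3$ to the continuity equation \eqref{nor13} delivers $\|Z\partial_3^2 u_3\|$, and applying $Z$ to the equation \eqref{nor14} for $\partial_{33}u_3$ gives the remaining $\|Z\partial_{33}u_3\|$ and its time integral in terms of quantities already estimated in Propositions \ref{L2}--\ref{NZ3R}.

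For the horizontal part, I apply $Z\partial_3$ to \eqref{eta} to obtain
\begin{equation*}
(\rho_a+\rho)\bigl(\partial_t Z\partial_3\eta+(u_a+u)\cdot\nabla Z\partial_3\eta\bigr)
=\mu\epsilon^2\triangle Z\partial_3\eta+Z\partial_3(F_y+G)+\mathcal{C}^{3,Z}_\eta,
\end{equation*}
where $\mathcal{C}^{3,Z}_\eta$ collects the commutators of $Z\partial_3$ with $\rho_a+\rho$, $(\rho_a+\rho)(u_a+u)\cdot\nabla$, and $\mu\epsilon^2\triangle$. I would then perform a standard $L^2$ energy estimate, controlling $\|Z\partial_3(F_y+G)\|^2$ and $\|\mathcal{C}^{3,Z}_\eta\|^2$ by the previously established tangential and normal bounds, exactly as in Propositions \ref{N33u} and \ref{NZ3u} (the commutators producing at worst $\epsilon^{2K-8}$ after using $\|Z\partial_3 R\|$, $\|Z^2 R\|$, $\|Z^2\partial_3 u\|$ and the elliptic Poisson bound on $\phi$).

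The main obstacle, as in Proposition \ref{N33u}, is the boundary term $\mu\epsilon^2\int_0^T\int_{\partial\Omega}\partial_3(Z\partial_3\eta)\cdot Z\partial_3\eta$ produced by integration by parts. The treatment splits by the choice of the $Z$ factor: for $Z\in\{Z_0,Z_1,Z_2\}$ the boundary trace of $Z\partial_3\eta$ is the tangential derivative of $(\partial_{23}u_3,-\partial_{13}u_3)$, while for $Z=Z_3=\psi(x_3)\partial_3$ the trace vanishes because $\psi(0)=0$. In the first case I would use the equation \eqref{eta} itself to replace $\triangle Z\partial_3\eta$ at the boundary by lower-order terms, then apply the trace inequality in Section 3.1, bounding $|\cdot|_{H^s(\partial\Omega)}^2$ by products of conormal norms of $F_y$, $G$ and $\nabla_y u_3$ that are all of size $\epsilon^{2K-8}$ or better.

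Once the energy identity is closed and the commutator and boundary contributions are absorbed either into the left-hand dissipation $\epsilon^2\|\nabla Z\partial_3\eta\|^2$ (via Young's inequality with a small constant, as in the $\kappa$-argument in the proof of Proposition \ref{NZ3u}) or into $C\epsilon^{2K-8}$, one recovers $\|Z\partial_3\eta\|^2+\epsilon^2\int_0^T\|\nabla Z\partial_3\eta\|^2\le C\epsilon^{2K-8}$. Combining with the identity $\partial_3 u_y=\omega_y+\nabla_y^\perp u_3=\eta+u_y^\perp+\nabla_y^\perp u_3$ and with the bounds on $Z\partial_3^2 u_3$ yields the full estimate claimed in Proposition \ref{NZ33u}.
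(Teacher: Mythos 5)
Your proposal is essentially the paper's own approach: the paper simply remarks that the proof is ``given similarly to Proposition \ref{N33u},'' and your plan is exactly that argument with one extra tangential derivative $Z$ — recovering $Z\partial_3^2u_3$ and $\epsilon^2\int_0^T\|Z\partial_3^3u_3\|^2$ from \eqref{nor13}--\eqref{nor14}, then running the energy estimate for $Z\partial_3\eta$ obtained from \eqref{eta}, using the boundary condition $\partial_3\eta|_{x_3=0}=(\partial_{23}u_3,-\partial_{13}u_3)$ and the equation to rewrite the boundary term, and absorbing commutators via the $\kappa$-small Young argument from Proposition \ref{NZ3u}. The explicit case split of the boundary term according to whether $Z\in\{Z_0,Z_1,Z_2\}$ (tangential trace of $\nabla_y^{\perp}\partial_3 u_3$) or $Z=Z_3$ (trace vanishes since $\psi(0)=0$) is a correct and sensible way to organize what the paper leaves implicit.
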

\begin{proof}
The proof can be given similarly to Proposition \ref{N33u}, so we omit it for the sake of simplicity.
\end{proof}

\begin{prop} \label{N33R}
Under the assumptions of Proposition \ref{L2}, it holds that
\begin{align}
\epsilon^2&\|\partial^2_3R\|^2+\int_0^T\|\partial_{3}^2R\|\leq C\epsilon^{2K-6}.\nonumber
\end{align}
\end{prop}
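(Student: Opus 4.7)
The plan is to mimic the proofs of Propositions \ref{N3R} and \ref{NZ3R} but apply $\partial_3^2$ (rather than $\partial_3$ or $Z\partial_3$) to the transport-type equation \eqref{nor6} for $R$. Schematically this yields
\begin{align*}
(2\mu+\nu)\epsilon^2\big(\partial_t\partial_3^2R&+(u_a+u)\cdot\nabla\partial_3^2R+\partial_3(u_a+u)_3\partial_3^2R+\partial_3^2R\,\nabla\cdot(u_a+u)\big)\\
&+(\rho_a+\rho)\partial_3^2R=\partial_3^2\!\big[(2\mu+\nu)\epsilon^2M_1+M_2+M_3\big]+\mathcal{C}_{\partial_3^2},
\end{align*}
where $\mathcal{C}_{\partial_3^2}$ collects the four commutators of $\partial_3^2$ with the convective and multiplicative coefficients. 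Multiplying by $\partial_3^2 R$, integrating on $[0,T]\times\Omega$, and exploiting the positivity of $\rho_a+\rho$ gives the baseline inequality
\begin{align*}
\epsilon^2\|\partial_3^2R\|^2+\int_0^T\|\partial_3^2R\|^2\le C\int_0^T\|\partial_3^2[(2\mu+\nu)\epsilon^2M_1+M_2+M_3]\|^2+C\int_0^T\|\mathcal{C}_{\partial_3^2}\|^2+C\epsilon^{2K},
\end{align*}
so the whole task reduces to showing that both integrals on the right are $O(\epsilon^{2K-6})$.

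For the source contribution, distributing $\partial_3^2$ over $M_1,M_2,M_3$ produces terms in which the two normal derivatives fall on $\rho_a$ (each costing a factor $\epsilon^{-1}$), on $u$, on $\phi$, or on $\rho$. The only potentially dangerous summands involve $\partial_3^3 u_3$, $\partial_3^3\phi$, $\partial_3 R$ or $\partial_3^2\rho_a$, all of which have already been controlled in $L^2_tL^2_x$ by Propositions \ref{Nphi1}--\ref{NZ33u} with losses matching $\epsilon^{2K-6}$; in particular, $\epsilon^2\partial_3^3 u_3$ is recovered by differentiating \eqref{nor14} in $x_3$ and inserting the bound from Proposition \ref{N3R}, while $\epsilon^2\partial_3^3\phi$ is read off directly from the Poisson equation. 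A term-by-term tabulation along the lines of the analogous step in the proof of Proposition \ref{N3R} settles this part.

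The more delicate piece is $\mathcal{C}_{\partial_3^2}$. The commutator $[\partial_3^2,\rho_a+\rho]R$ expands as $\partial_3^2\rho_a\cdot R+2\partial_3\rho_a\cdot\partial_3R+\partial_3^2\rho\cdot R+2\partial_3\rho\cdot\partial_3R$; using $\|\partial_3^k\rho_a\|_{L^\infty}\lesssim\epsilon^{-k}$ together with Propositions \ref{NR} and \ref{N3R}, its squared $L^2_{t,x}$ norm is at worst of size $\epsilon^{-4}\cdot\epsilon^{2K-2}+\epsilon^{-2}\cdot\epsilon^{2K-4}\lesssim\epsilon^{2K-6}$. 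The transport commutator $\epsilon^2[\partial_3^2,(u_a+u)\cdot\nabla]R$ produces, apart from lower-order pieces handled exactly as in the proof of Proposition \ref{N3R}, a top-order term $\epsilon^2\,\partial_3(u_a+u)_3\,\partial_3^2R$ which, thanks to the $\epsilon^2$ prefactor and the $L^\infty$ bound on $\partial_3(u_a+u)_3$, is absorbed into $\int_0^T\|\partial_3^2R\|^2$ for small $\epsilon$, together with a mixed piece $\epsilon^2\,\partial_3^2(u_a+u)_3\,\partial_3R$ controlled through Propositions \ref{N33u} and \ref{N3R}.

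The main obstacle is the careful accounting of $\epsilon$--powers in the singular sources and commutators: one must verify that wherever the strong boundary layer of $\rho_a$ meets the highest available normal derivatives of $u$, $\phi$, or $\rho$, the cumulative loss never exceeds $\epsilon^{-6}$, so that the approximation gain $\epsilon^{2K}$ keeps the right-hand side at $O(\epsilon^{2K-6})$. Once this bookkeeping is settled, absorbing the $\epsilon^2$--weighted top-order pieces into the damping term and invoking Gronwall's inequality (with the integrable factor $\|Zu\|_{H^3}^2$ arising in the transport commutator, as in the proof of Proposition \ref{NZ2R}) closes the estimate.
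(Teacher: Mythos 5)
Your proposal is correct and follows essentially the same route as the paper: apply $\partial_3^2$ to the damped transport equation \eqref{nor6} for $R=\partial_3\rho$, test against $\partial_3^2R$, control the source $\partial_3^2(M_1,M_2,M_3)$ using the previously established Propositions \ref{Nphi1}--\ref{NZ33u}, expand the four commutators of $\partial_3^2$ with the coefficients, and absorb the $\epsilon^2$--weighted top-order pieces. Your bookkeeping for $[\partial_3^2,\rho_a+\rho]R$ matches the paper's $I_4$ exactly, and your tabulation of the losses from $\partial_3^2\rho_a$ and $\partial_3\rho_a\partial_3R$ is the correct one. Two small presentational points: the piece $\epsilon^2\partial_3(u_a+u)_3\,\partial_3^2R$ is not part of the commutator $[\partial_3^2,(u_a+u)\cdot\nabla]R$ but is a left-hand-side term already present in \eqref{nor6} (the term $R\,\partial_3(u_a+u)_3$) and reappears with $\partial_3^2R$ when differentiated; and the paper closes by absorbing $C_a\epsilon^2\int_0^T\|\partial_3^2R\|^2$ into $\int_0^T\|\partial_3^2R\|^2$ for small $\epsilon$ rather than by Gronwall with a $\|Zu\|_{H^3}^2$ factor (there is no such surviving time-dependent weight at this stage of the hierarchy). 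Neither affects the validity of the argument.
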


\begin{proof}
Applying $\partial^2_3$ to the system \eqref{nor6} gives
\begin{align}   \label{nor23}
(2\mu+\nu)\epsilon^2\Big(\partial_t\partial^2_3R&+(u_a+u)\cdot\nabla \partial_3R+\partial_3(u_a+u)_3\partial^2_3R
\Big)\nonumber\\
&+(2\mu+\nu)\epsilon^2\partial^2_3R\nabla\cdot (u+ u_a)+(\rho_a+\rho)\partial^2_3R\nonumber\\
&=\partial^2_3[(2\mu+\nu)\epsilon^2M_1+M_2+M_3]+\mathcal{C},
\end{align}
where commutator $\mathcal{C}$ is given by
\begin{align}
\mathcal{C}&=-(2\mu+\nu)\epsilon^2\Big([\partial^2_3,(u_a+u)\cdot\nabla]R+[\partial^2_3,\partial_3(u_a+u)_3]R\Big)\nonumber\\
&-(2\mu+\nu)\epsilon^2[\partial^2_3,\nabla\cdot (u+ u_a)]R-[\partial^2_3,(\rho_a+\rho)] R.\nonumber
\end{align}
Using energy method to \eqref{nor23}, we have
\begin{align}   \label{nor24}
\epsilon^2&\|\partial^2_3 R\|^2+\int_0^T\|\partial^2_3 R\|^2\nonumber\\
&\leq \int_0^T\|\partial^2_3\Big((2\mu+\nu)\epsilon^2M_1+M_2+M_3\Big)\|^2+\int_0^T\|\mathcal{C}\|^2+C\epsilon^{2K-6}.
\end{align}
By the estimates we have established in previous propositions, we get
\begin{align}
\int_0^T\|\partial^2_3\Big((2\mu+\nu)\epsilon^2M_1+M_2+M_3\Big)\|^2\leq \epsilon^{2K-6}.\nonumber
\end{align}
The estimate of the commutator can be divided into
\begin{align}
\int_0^T\|\mathcal{C}\|^2\leq&\; \epsilon^2\int_0^T\|[\partial^2_3,(u_a+u)\cdot\nabla]R\|^2+\epsilon^2\int_0^T\|[\partial^2_3,\partial_3(u_a+u)_3]R\|^2\nonumber\\
&+\epsilon^2\int_0^T\|[\partial^2_3,\nabla\cdot (u+ u_a)]R\|^2+\int_0^T\|[\partial^2_3,(\rho_a+\rho)] R\|^2\nonumber\\
:=&\sum_{i=1}^4I_i.\nonumber
\end{align}
For $I_1$, we have
\begin{align}
I_1&\leq \epsilon^2\int_0^T\|\partial^2_3(u_a+u)\cdot\nabla R\|^2
+\epsilon^2\int_0^T\|\partial_3(u_a+u)\cdot\nabla \partial_3R\|^2\nonumber\\
&\leq \epsilon^2\int_0^T\|\partial^2_3u_a\|^2_{L^\infty}\|\nabla R\|^2+\epsilon^2\int_0^T\|\partial^2_3u\|^2_{L^3}\|\nabla R\|_{L^6}^2\nonumber\\
&\;\;+\epsilon^2\int_0^T\|\partial_3 u_a\|^2_{L^\infty}\|\nabla \partial_3R\|^2+\epsilon^2\int_0^T\|\partial_3 u\|^2_{L^\infty}\|\nabla \partial_3R\|^2\nonumber\\
&\leq C_a\int_0^T\|\nabla R\|^2+\epsilon^2\int_0^T(\|\partial^2_3u\|^2+\|\nabla \partial^2_3u\|^2)\|\nabla^2 R\|^2\nonumber\\
&\;\;+C_a\epsilon^2\int_0^T\|\nabla \partial_3R\|^2+\epsilon^2\int_0^T\|\partial_3 u\|^2_{H^2}\|\nabla^2 R\|^2\nonumber\\
&\leq C_a\epsilon^2\int_0^T\| \partial^2_3R\|^2+C\epsilon^{2K-6}.\nonumber
\end{align}
For $I_2$ and $I_3$, it is straightforward to have that
\begin{align}
I_2+I_3&\leq C\epsilon^{2K-6}.\nonumber
\end{align}
For $I_4$, one has
\begin{align}
I_4&\leq \int_0^T\|\partial^2_3(\rho_a+\rho) R\|^2+\int_0^T\|\partial_3(\rho_a+\rho) \partial_3R\|^2\nonumber\\
&\leq \int_0^T\|\partial^2_3\rho_a\|_{L^\infty}^2 \|R\|^2+2\int_0^T\|R\|_{L^\infty}^2 \|\partial_3R\|^2\nonumber\\
&\;\;+\int_0^T\|\partial_3\rho_a\|^2_{L^\infty} \|\partial_3R\|^2\nonumber\\
&\leq C\epsilon^{2K-6}.\nonumber
\end{align}
From \eqref{nor24} and above estimates, one has
\begin{align}
\epsilon^2&\|\partial^2_3 R\|^2+\int_0^T\|\partial^2_3 R\|^2\leq C_a\epsilon^2\int_0^T\| \partial^2_3R\|^2+C\epsilon^{2K-6}.\nonumber
\end{align}
The proof of Proposition \ref{N33R} is completed.
\end{proof}
Finally, we have
\begin{prop} \label{N333u}
It holds that
\begin{align}
\|\partial^3_{3}u\|^2+\epsilon^2&\int_0^T\|\partial^4_{3}u\|^2\leq C\epsilon^{2K-10}.\nonumber
\end{align}
\end{prop}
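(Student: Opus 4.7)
The plan is to prove Proposition \ref{N333u} by following the same strategy used in Propositions \ref{N33u} and \ref{NZ33u}, splitting the estimate into the normal component $u_3$ and the horizontal component $u_y$, and obtaining the time-integrated $\epsilon^2\partial_3^4 u$ part by combining static identities (from the equations) with a vorticity-type energy estimate.

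First, for the normal component, I would differentiate the identity \eqref{nor13} twice in $x_3$ to express
$-\partial_3^3 u_3$ as $\partial_3^2$ applied to an expression involving $\partial_t\rho$, $\nabla\rho$, $\nabla_y\cdot u_y$, and lower-order terms times boundary-layer coefficients. Using Proposition \ref{N33R} for $\|\partial_3^2 R\|$, together with Propositions \ref{NZ2R}, \ref{NZ3R} and the tangential bounds of Propositions \ref{L2}, \ref{tan}, one obtains $\|\partial_3^3 u_3\|^2\le C\epsilon^{2K-10}$. For the associated $\epsilon^2\int_0^T\|\partial_3^4 u_3\|^2$ estimate, I would apply $\partial_3^2$ to the $u_3$-equation \eqref{nor14} (solved for $(2\mu+\nu)\epsilon^2\partial_3^2 u_3$), take its $L^2L^2$ norm, and control every term using the propositions already proved, noting that the most singular contributions are $\partial_3^3\rho$ (from Proposition \ref{N33R}) and $\partial_3^3\phi$ (from Proposition \ref{Nphi3}).

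For the horizontal component, I would apply $\partial_3^2$ to equation \eqref{eta} to derive
\begin{align*}
(\rho_a+\rho)\bigl(\partial_t\partial_3^2\eta+(u_a+u)\cdot\nabla\partial_3^2\eta\bigr)=\mu\epsilon^2\triangle\partial_3^2\eta+\partial_3^2(F_y+G)+\mathcal{C}^4_\eta,
\end{align*}
with $\mathcal{C}^4_\eta$ a commutator. The boundary condition $\partial_3^2\eta|_{x_3=0}$ is computed directly from the definition $\eta=\omega_y-u_y^\perp$ and the Navier-slip condition \eqref{2.8}, giving a trace expressible via derivatives of $u_3$ (plus lower-order $u_y$). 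A standard energy estimate then yields
\begin{align*}
\|\partial_3^2\eta\|^2+\mu\epsilon^2\int_0^T\|\nabla\partial_3^2\eta\|^2\le \mu\epsilon^2\int_0^T\!\!\int_{\partial\Omega}\partial_3^3\eta\cdot\partial_3^2\eta+\int_0^T\|\partial_3^2(F_y+G)+\mathcal{C}^4_\eta\|^2.
\end{align*}
Once this is bounded by $C\epsilon^{2K-10}$, the identity $\partial_3 u_y=\eta+u_y^\perp+(\text{terms involving }\nabla_y u_3)$ converts it into the desired $\|\partial_3^3 u_y\|^2+\epsilon^2\int_0^T\|\partial_3^4 u_y\|^2$ bound, after invoking Propositions \ref{N33u}, \ref{NZ33u} to control the horizontal derivatives of $u_3$.

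The main obstacle, exactly as in Proposition \ref{N33u}, is the boundary term $\mu\epsilon^2\int_0^T\int_{\partial\Omega}\partial_3^3\eta\cdot\partial_3^2\eta$. I would handle it by substituting the equation itself for $\mu\epsilon^2\partial_3^3\eta=\mu\epsilon^2(\triangle\eta-\triangle_y\eta)_{\text{at order 3}}$ rewritten through the $\partial_3^2$-equation of $\eta$, producing a trace of $(\rho_a+\rho)\bigl(\partial_t\partial_3\eta+(u_a+u)\cdot\nabla\partial_3\eta\bigr)-\partial_3(F_y+G)-\mathcal{C}^3_\eta$ multiplied by $\partial_3^2\eta$ restricted to $\partial\Omega$. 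Using the trace inequality $|f|_{L^2(\partial\Omega)}^2\le C\|\nabla f\|\,\|f\|+C\|f\|^2$, one bounds this by
$\mu\epsilon^2\int_0^T(\|\nabla(\cdot)\|\,\|(\cdot)\|)$ with $(\cdot)$ involving already-controlled quantities from Propositions \ref{N3R}--\ref{N33R} and \ref{N33u}--\ref{NZ33u}. Every such factor is of size $O(\epsilon^{K-5})$ or better, and the $\mu\epsilon^2$ prefactor absorbs the loss incurred by differentiating the singular boundary-layer profiles, producing the final exponent $2K-10$. The commutator $\mathcal{C}^4_\eta$ is handled by the same Hardy-type bounds (using $|u_3|\le x_3\|\partial_3 u\|_{L^\infty}$ and the fast decay of boundary-layer profiles) that were used in Propositions \ref{NZ33u} and \ref{N33R}; the $\mu\epsilon^2[\partial_3^2,\triangle]\eta$ piece is absorbed into $\mu\epsilon^2\|\nabla\partial_3^2\eta\|^2$ after integration by parts with the smallness of $\epsilon$. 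Combining all of this closes the estimate and completes the proof.
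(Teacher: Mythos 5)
Your proposal follows the same three-part strategy as the paper's proof: extract $\|\partial_3^3 u_3\|$ from the continuity equation and $\epsilon^2\int_0^T\|\partial_3^4 u_3\|^2$ from the $u_3$-momentum equation via the static identities, then obtain the horizontal components by applying $\partial_3^2$ to the vorticity equation \eqref{eta} and performing an energy estimate with the boundary term $\mu\epsilon^2\int_0^T\int_{\partial\Omega}\partial_3^3\eta\cdot\partial_3^2\eta$ handled by substituting the $\eta$-equation and invoking the trace inequality, exactly as in the proof of Proposition \ref{N33u}. This matches the paper's (rather terse) argument, so the proposal is correct and takes essentially the same route.
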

\begin{proof}
Using the first equation of \eqref{3.1}, we have
\begin{align}
-\partial^3_3 u_3&=\partial_3^2\Big\{\frac{1}{\rho_a+\rho}[\partial_t\rho+(u_a+u)\cdot\nabla\rho]+\nabla_y\cdot u_y\nonumber\\
&\;\;+\frac{1}{\rho_a+\rho}[u\cdot\nabla\rho_a+\rho\nabla\cdot u_a-\epsilon^KR_\rho]\Big\}.\nonumber
\end{align}
Taking $L^2$ norm yields the desired estimate of $\|\partial^3_3 u_3\|$.
While from the equation of $u_3$, we have
\begin{align}         
(2\mu+\nu)\epsilon^2\partial^4_{3} u_3=&\;\partial^2_{3}\Big\{(\rho_a+\rho)\big(\partial_t u_3+(u_a+u)\cdot\nabla u_3+u\cdot\nabla u_a^3\big)\nonumber\\
&+\big(\partial_3 \rho-\frac{\partial_3\rho_a}{\rho_a}\rho\big)-(\rho_a+\rho)\partial_3\phi+\mu\epsilon^2\triangle_y u_3\nonumber\\
&+(\mu+\nu)\epsilon^2\partial_3\nabla_y\cdot u_y+H(\rho, \rho_a, u_a)_3+\epsilon^KR_u\Big\}.\nonumber
\end{align}
Taking $L^2L^2$ norm to both side gives the estimate of $\int_0^T\|\partial^4_{3}u_3\|^2$.
Next, taking $\partial^2_{3}$ to \eqref{eta} gives
\begin{align}        
(\rho_a+\rho)&\big(\partial_t \partial^2_{3}\eta+(u_a+u)\cdot\nabla \partial^2_{3}\eta\big)=\mu\epsilon^2\triangle \partial^2_{3}\eta+\partial^2_{3}(F_y+G)+\mathcal{C}.\nonumber
\end{align}
Multiplying the above equation by $\partial^2_{3}\eta$ and integrating over $[0,T]\times\Omega$ yield
\begin{align}        
\|\partial^2_{3}\eta\|^2+\mu\epsilon^2\int_0^T\|\nabla \partial^2_{3}\eta\|^2=\int_0^T\int_\Omega\partial^3_{3}\eta\cdot\partial^2_{3}\eta+\int_0^T\int_\Omega \partial^2_{3}(F_y+G+\mathcal{C})\cdot\partial^2_{3}\eta.\nonumber
\end{align}
By estimating the source terms and the boundary terms, one has
\begin{align}        
\|\partial^2_{3}\eta\|^2+\mu\epsilon^2\int_0^T\|\nabla \partial^2_{3}\eta\|^2&\leq C\epsilon^{2K-10}.\nonumber
\end{align}
The proof of Proposition \ref{N333u} is completed.
\end{proof}

\noindent{\bf Proof of the main theorem.}
Combining the estimates in the previous three sections, we complete the proof of Proposition \ref{mainproposition} of the a priori estimates.  From the local well-posedness theory in Section 3.1, we know that
\begin{align}
T^\epsilon=\sup\{T>0,\quad \forall t\in[0,T], \quad \Lambda(t)\leq \epsilon^{2}\}>0,
\end{align}
for
$$\Lambda(t):=\|(\rho,u,\phi,\epsilon\nabla\phi)(t)\|^2_{\mathcal{H}^3}+\epsilon^2\int_0^t\|\nabla u\|^2_{\mathcal{H}^3}d\tau.$$

Since $K>6$ so that $2K-10>2$,   in view of the definition of $T^\epsilon$ and Proposition \ref{mainproposition}, we can find $T>0$ such that $T^\epsilon\geq T$ and the following uniform estimate holds on $[0,T]$,
\begin{align}
\sup_{0\leq t\leq T}\|(\rho,u,\phi,\epsilon\nabla\phi)\|_{H^3(\mathbb{R}^3_+)}\leq C\epsilon^{K-5}.\nonumber
\end{align}
Therefore, we have proved our main theorem.
\section{acknowledgements}
 The research of Ju was supported by NSFC(Grant Nos. 12131007 and 12070144). Luo's research was supported by a GRF grant CityU 11306117 of RGC (Hong Kong).
Xu's research was supported by Natural Science Foundation of China (Grant No. 12001506) and Natural Science Foundation of Shandong Province (Grant No. ZR2020QA014). The work of Ju and Xu was also supported by the ISF-NSFC joint research program (NSFC Grant No. 11761141008).



\end{document}